\numberwithin{equation}{section}
\newtheorem{thm}[subsection]{Theorem}
\newtheorem{lem}[subsection]{Lemma}
\newtheorem{prop}[subsection]{Proposition}
\newtheorem{cor}[subsection]{Corollary}
{
\theoremstyle{definition}
\newtheorem{rem}[subsection]{Remark}

\newtheorem{definition}[subsection]{Definition}
\newtheorem{example}[subsection]{Example}
\newtheorem{defn}[subsection]{Definition}
}
\newcommand{\CC}{\mathbb C}
\newcommand{\ZZ}{\mathbb Z}
\newcommand{\RR}{\mathbb R}
\newcommand{\AAA}{\mathbf{A}}
\newcommand{\VVV}{\mathbf{V}}
\newcommand{\xS}{\mathbf{xS}}
\newcommand{\xx}{\mathbf{x}}
\newcommand{\CM}{{Cayley-Menger} }
\newcommand{\InCM}{{I_n^{\textup{CM}}}}
\newcommand{\xdiamond}{\rotatebox[origin=c]{45}{$\boxplus\!\!$
}}
\newcommand{\neline}{{\scriptstyle\diagup}}
\newcommand{\seline}{{\scriptstyle\diagdown\,}}
\newcommand{\smallneline}{{\,\scriptscriptstyle\diagup}}
\newcommand{\smallseline}{{\scriptscriptstyle\diagdown\hspace{0.3pt}}}
\newcommand{\B}{\bigg}
\newcommand{\Lnote}[1]{{\color{red}[$\star$#1$\star$]}}
\begin{document}
\title{Heronian Friezes}

\author{Sergey Fomin}
\address{Department of Mathematics, University of Michigan, Ann Arbor, MI 48109, USA}
\email{fomin@umich.edu}

\author{Linus Setiabrata}
\address{Department of Mathematics, Cornell University, Ithaca, NY 14853, USA} 
\email{ls823@cornell.edu}

\thanks
{\emph{2010 Mathematics Subject Classification}
Primary 52C25,  
Secondary  13F60, 
51K99. 
}

\thanks{Partially supported by NSF grant DMS-1664722.}

\keywords{Heron's formula, frieze, Laurent phenomenon, Cayley-Menger equation, distance geometry, rigidity, triangulation.} 

\date{September 1, 2019. Revised December 26, 2019.}

\begin{abstract}
Motivated by computational geometry of point configurations on the Euclidean plane, and by the theory of cluster algebras of type~$A$, we introduce and study \emph{Heronian friezes}, the Euclidean analogues of Coxeter's frieze patterns. 
We prove that a generic Heronian frieze possesses the glide symmetry (hence is periodic), and establish the appropriate version of the \emph{Laurent phenomenon}. 

For a closely related family of \emph{\CM friezes}, we identify an algebraic condition of coherence, which all friezes of geometric origin satisfy. This yields an unambiguous propagation rule for coherent \CM friezes, as well as the corresponding periodicity results. 
%
\end{abstract}
\ \vspace{-.1in}

\maketitle

\vspace{-.4in}

\begin{center}
\setlength{\unitlength}{0.8pt}
\begin{picture}(400,110)(0,0)
\thicklines
\multiput(0,0)(40,0){9}{\line(1,1){80}}
\multiput(0,80)(40,0){9}{\line(1,-1){80}}

\put(0,40){\line(1,1){40}}
\put(0,40){\line(1,-1){40}}
\put(400,40){\line(-1,-1){40}}
\put(400,40){\line(-1,1){40}}

\multiput(0,0)(40,0){11}{\circle*{4}}
\multiput(10,10)(20,0){20}{\circle*{4}}
\multiput(20,20)(40,0){10}{\circle*{4}}
\multiput(10,30)(20,0){20}{\circle*{4}}
\multiput(0,40)(40,0){11}{\circle*{4}}
\multiput(10,50)(20,0){20}{\circle*{4}}
\multiput(20,60)(40,0){10}{\circle*{4}}
\multiput(10,70)(20,0){20}{\circle*{4}}
\multiput(0,80)(40,0){11}{\circle*{4}}

\thinlines
\multiput(-1.5,18.5)(5,5){13}{\color{blue}\line(1,1){3}}
\multiput(-1.5,58.5)(5,5){5}{\color{blue}\line(1,1){3}}
\multiput(18.5,-1.5)(5,5){17}{\color{blue}\line(1,1){3}}
\multiput(58.5,-1.5)(5,5){17}{\color{blue}\line(1,1){3}}
\multiput(98.5,-1.5)(5,5){17}{\color{blue}\line(1,1){3}}
\multiput(138.5,-1.5)(5,5){17}{\color{blue}\line(1,1){3}}
\multiput(178.5,-1.5)(5,5){17}{\color{blue}\line(1,1){3}}
\multiput(218.5,-1.5)(5,5){17}{\color{blue}\line(1,1){3}}
\multiput(258.5,-1.5)(5,5){17}{\color{blue}\line(1,1){3}}
\multiput(298.5,-1.5)(5,5){17}{\color{blue}\line(1,1){3}}
\multiput(338.5,-1.5)(5,5){13}{\color{blue}\line(1,1){3}}
\multiput(378.5,-1.5)(5,5){5}{\color{blue}\line(1,1){3}}

\multiput(-1.5,61.5)(5,-5){13}{\color{blue}\line(1,-1){3}}
\multiput(-1.5,21.5)(5,-5){5}{\color{blue}\line(1,-1){3}}
\multiput(18.5,81.5)(5,-5){17}{\color{blue}\line(1,-1){3}}
\multiput(58.5,81.5)(5,-5){17}{\color{blue}\line(1,-1){3}}
\multiput(98.5,81.5)(5,-5){17}{\color{blue}\line(1,-1){3}}
\multiput(138.5,81.5)(5,-5){17}{\color{blue}\line(1,-1){3}}
\multiput(178.5,81.5)(5,-5){17}{\color{blue}\line(1,-1){3}}
\multiput(218.5,81.5)(5,-5){17}{\color{blue}\line(1,-1){3}}
\multiput(258.5,81.5)(5,-5){17}{\color{blue}\line(1,-1){3}}
\multiput(298.5,81.5)(5,-5){17}{\color{blue}\line(1,-1){3}}
\multiput(338.5,81.5)(5,-5){13}{\color{blue}\line(1,-1){3}}
\multiput(378.5,81.5)(5,-5){5}{\color{blue}\line(1,-1){3}}
\end{picture}
\end{center}

\section{Introduction}
\label{sec:intro}

Coxeter's \emph{frieze patterns}~\cite{coxeter-friezes} are certain multi-line arrays of numbers satisfying a simple local condition (a~determinantal recurrence).
They arise in multiple mathematical contexts including quiver representations, plane hyperbolic geometry, and most recently, cluster algebras of type~$A$;
see~\cite{morier-genoud} for an excellent survey. 
In this paper, we introduce \emph{Heronian friezes}, the analogues of Coxeter's friezes built using recurrence relations arising in the context of metric geometry of the Euclidean plane. 
A~Heronian frieze is an algebraic abstraction of the set of measurements associated with an $n$-tuple of points on the plane; these measurements include the squared distances between pairs of points as well as signed areas of oriented triangles formed by triples of points. Just like the ordinary friezes, the Heronian ones are governed by rational recurrences. The key distinction from the classical setting 
is that the quantities being updated as one moves along a Heronian frieze are not algebraically independent: they satisfy Heron's formulas. Crucially, these algebraic dependences propagate under the frieze recurrences. 

We establish the basic properties of Heronian friezes, most importantly those concerning \emph{periodicity} and \emph{Laurentness}. We also study a related notion of a \emph{Cayley-Menger frieze}, based on the eponymous equation involving the six squared distances between four coplanar points. 
To achieve unambiguous single-valued propagation in a Cayley-Menger frieze, we identify a subtle algebraic condition of \emph{coherence}, which involves squared distances between six coplanar points. 

\medskip
\pagebreak[3]

We next provide a brief overview of the paper. 
Suppose one wants to describe a configuration of $n$ points on the Euclidean plane~$\AAA$, viewed up to the action of the group $\textnormal{Aut}(\AAA)$ of orientation-preserving rigid motions. The parameters (\emph{measure\-ments}) used in such a description must be $\textnormal{Aut}(\AAA)$-invariant. The standard approach of distance geometry is to use a subset of the squared distances between the points. Since the configuration space has dimension $2n-3$, it is natural to start by measuring some appropriately selected $2n-3$ squared distances. The simplest choice is to pick a \emph{triangulation} of a convex $n$-gon  by $n-3$ of its diagonals, view it as a graph with $n$ vertices and $2n-3$ edges, and measure the distances between the pairs of points in a configuration corresponding to the sides and diagonals of the polygon. Assuming that the configuration is sufficiently generic (namely, all diagonal lengths are nonzero), this brings the dimension down to zero; in other words, the number of configurations with the given values of those $2n-3$ measurements is finite. Unfortunately this number is exponentially large: for each triangle of the triangulation, there are two possible orientations, and each of the $2^{n-2}$ choices can be realized. 

One way to resolve this ambiguity is to add additional ``bracing'' edges to the triangulation~\cite{jt2019}. A~frieze version of this approach is developed in Section~\ref{sec:cayley-menger}, reviewed later in this introduction. In Section~\ref{sec:polygons}, we propose a different approach (inspired by classical invariant theory) which appears to allow for a better control of the computational and algebraic aspects of the problem: in addition to the $2n-3$ squared distances, we measure the \emph{signed areas} of the $n-2$ triangles of the triangulation. In other words, for each of these triangles, we choose one of the two square roots in Heron's formula. It turns out that once such choices have been made, the rest of the measurements (in particular, the squared distances for all $\binom{n}{2}$ pairs of points) can be computed using \emph{rational} recurrences. 

An explicit implementation of these recurrences leads us to the notion of a Heronian frieze, introduced in Section~\ref{sec:heron}. 
We show that a sufficiently generic Heronian frieze 
is uniquely determined by a small proportion of its entries. 
We~then prove, under the same genericity assumption, that any Heronian frieze possesses the \emph{glide symmetry}, and consequently is \emph{periodic}; see Theorem~\ref{thm:frieze-to-pattern}. 
These periodicity properties parallel the analogous properties of Coxeter-Conway friezes. 

In Section~\ref{sec:laurentness}, we establish the \emph{Laurent phenomenon} for Heronian friezes: every squared distance and every signed area of a triangle in an $n$-point configuration can be expressed as a Laurent polynomial in the initial measurement data associated with an arbitrary triangulation of an $n$-gon; see Theorems~\ref{thm:laurentness-full} and~\ref{thm:laurent-thin}. Note that the \hbox{$3n-5$} initial measurements are not algebraically independent, so there is no canonical rational function that expresses an arbitrary measurement in terms of the initial~ones. 
Curiously, the only initial measurements which appear 
in the denominators of our Laurent expressions are those corresponding to the diagonals of the initial triangulation. While the absence of the squared distances corresponding to the sides of the polygon did not come as a surprise (given a similar phenomenon in cluster theory), we see no simple conceptual explanation for the absence of signed areas in the denominators. 
Another mystery is that in spite of having the same underlying combinatorics as cluster algebras of type~$A$, this construction does not appear to fit into any (generalized) cluster algebras setup known to us. 


Section~\ref{sec:cayley-menger}  is essentially self-contained. It is devoted to an alternative construction of friezes adapted to Euclidean geometry of point configurations. 
This time, we do not use signed areas at all, keeping squared distances as the only entries of a frieze. The na\"ive idea is to use a propagation rule based on the \emph{Cayley-Menger equation} satisfied by the six squared distances between pairs of vertices of a plane quadrilateral. Unfortunately this approach immediately runs into a serious complication: unlike the Ptolemy relation used in the classical theory of friezes, the Cayley-Menger equation is \emph{quadratic} in each of the six variables, so the iterative process branches into two subcases at each step of the recurrence. (A~similar situation arises in the study of the Kashaev equation~\cite{kenyon-pemantle, leaf2019}.) To resolve the accumulating ambiguities, we employ an idea inspired by~\cite{leaf2019}: we identify an additional algebraic condition that must be satisfied by a Cayley-Menger frieze coming from a point configuration. This condition, which we call \emph{coherence} by analogy with~\cite{leaf2019}, involves $13$~frieze entries associated with a $3\times 3$ grid subpattern. The key advantage of the coherence equation is that it has degree~$1$ with respect to the rightmost (or leftmost) variable, so it can be used to set up a rational recurrence. Under this recurrence, the Cayley-Menger condition propagates, and a coherent frieze is uniquely reconstructed from the initial data, subject to certain genericity conditions. 
We later use these propagation rules to establish the glide symmetry of coherent \CM friezes, see 
Theorem~\ref{thm:CM-frieze-glide-symmetry}. 

In Section~\ref{sec:CM-vs-heronian}, we discuss  the relationship between Heronian and \CM friezes. We show that, subject to the aforementioned genericity conditions, the coherent \CM friezes are precisely the restrictions of Heronian friezes. This relationship closely resembles the one between the hexahedron equation of R.~Kenyon-R.~Pemantle~\cite{kenyon-pemantle} and Kashaev's equation. In fact, both relationships can be viewed as adaptations of \cite[Section 10]{leaf2019} to their respective contexts. 

Why does an approach employing both squared distances and signed areas produce simpler recurrences than the one that only uses squared distances? One possible explanation comes from the fact that in the case of point configurations on the plane, Cayley-Menger varieties are given by equations of degree~$3$, namely the vanishing of the mixed Cayley-Menger determinants, see~\cite{borcea2002}. By contrast, the ring of $\operatorname{SO}(2)$ invariants of a collection of several vectors is generated in degree~$2$. 

The results in this paper can be extended to other flat real geometries (such as the cylinder and the torus) by passing to the universal cover. We intend to investigate the hyperbolic and/or spherical cases in subsequent work. It would also be interesting to develop the analogues of these results for higher-dimensional geometry. 






Our work was inspired by several sources: the classical Coxeter-Conway theory of frieze patterns~\cite{conway-coxeter, coxeter-friezes}, the theory of rigidity phenomena in distance geometry (especially generic global rigidity on the plane  \cite{connelly2005, ght2010, gt2014}), classical invariant theory~\cite{weyl1946} (especially invariants of $\operatorname{SO}(2,\CC)$), the theory of cluster algebras of type~$A$~\cite{ca1, ca2} (especially their hyperbolic geometry models~\cite{cats2}), and A.~Leaf's theory of coherent solutions of the Kashaev equation~\cite{leaf2019}. 

\pagebreak[3]
\section{Triangulated polygons and Heronian diamonds}
\label{sec:polygons}

Let $\mathbf{V}$ be a two-dimensional vector space over~$\mathbb{C}$,
endowed with a symmetric inner product
$(u,v)\! \mapsto\! \langle u,v\rangle$
and an associated skew-symmetric volume form 
\hbox{$(u,v)\! \mapsto\! [u,v]$}. 
Without loss of generality, we can identify $\mathbf{V}$ with 
$\CC^2$,
with the two forms defined by
\begin{align*}
\langle u,v\rangle &= u'v' + u''v'', \\
[u,v] &= u' v'' - u'' v', 
\end{align*}
for $u=\left[\begin{smallmatrix}u'\\ u''\end{smallmatrix}\right]$, $v=\left[\begin{smallmatrix}v'\\ v''\end{smallmatrix}\right]$. 
Let $\AAA$ be the corresponding affine space (the complex plane). 
Each pair of points $A,B\in\AAA$ gives rise to a vector $\overrightarrow{AB}$ 
that moves $A$ to~$B$. 

\begin{definition}
\label{def:x-and-S}
For $A,B,C\in\AAA$, we define 
\begin{alignat}{23}
\label{eq:x(A,B)}
x(A,B)&=\langle \overrightarrow{AB} , \overrightarrow{AB}\rangle  &&\quad \text{(``squared distance between $A$ and $B$''),} \\
\label{eq:S(A,B,C)}
S(A,B,C) &= 2\,[\overrightarrow{AB},\overrightarrow{AC}] &&\quad \text{(``$4\,\times\!$ signed area of the triangle $ABC$'')}. 
\end{alignat}
\end{definition}


\begin{prop}[{\textrm{Heron's formula}}]
For any triple of points $A,B,C\in\AAA$, the ``measurements'' 
$x(A,B)$, $x(A,C)$, $x(B,C)$ and $S(A,B,C)$ satisfy 
\[
(S(A,B,C))^2=H(x(A,B), x(A,C), x(B,C))
\]
where we use the notation
\begin{equation}
\label{eq:Hpqr}
H(p,q,r)= -p^2-q^2-r^2+2pq+2pr+2qr. 
\end{equation}
\end{prop}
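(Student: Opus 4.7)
The plan is to reduce the identity to a computation involving two vectors $u=\overrightarrow{AB}$ and $v=\overrightarrow{AC}$, after which it becomes a clean algebraic manipulation built on the classical Gram (Lagrange) identity $[u,v]^2=\langle u,u\rangle\langle v,v\rangle-\langle u,v\rangle^2$.

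First I would rewrite the four measurements in terms of $u$ and $v$. By definition $x(A,B)=\langle u,u\rangle$, $x(A,C)=\langle v,v\rangle$, and $S(A,B,C)=2[u,v]$. Since $\overrightarrow{BC}=v-u$, bilinearity of $\langle\cdot,\cdot\rangle$ gives
\[
x(B,C)=\langle v-u,v-u\rangle=\langle u,u\rangle-2\langle u,v\rangle+\langle v,v\rangle,
\]
so that, writing $p=x(A,B)$, $q=x(A,C)$, $r=x(B,C)$, one can solve for the inner product: $\langle u,v\rangle=\tfrac{1}{2}(p+q-r)$. This is just the usual polarization identity, or equivalently the law of cosines in disguise.

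Next I would invoke the Gram identity $[u,v]^2=\langle u,u\rangle\langle v,v\rangle-\langle u,v\rangle^2$, which holds over any field for the determinantal form $[u,v]=u'v''-u''v'$ associated to the symmetric form $\langle u,v\rangle=u'v'+u''v''$ on $\CC^2$. (A one-line expansion verifies it; since $\VVV$ has been identified with $\CC^2$ this is a direct computation and needs no geometric input.) Substituting this into the square of $S$ gives
\[
S(A,B,C)^2=4[u,v]^2=4\langle u,u\rangle\langle v,v\rangle-4\langle u,v\rangle^2=4pq-(p+q-r)^2.
\]
Finally, expanding $(p+q-r)^2=p^2+q^2+r^2+2pq-2pr-2qr$ and simplifying yields exactly $-p^2-q^2-r^2+2pq+2pr+2qr=H(p,q,r)$.

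There is no real obstacle here: the whole content is the Gram identity plus polarization, both of which are coordinate-level checks on $\CC^2$. The only thing worth being careful about is the factor~$2$ in the definition of $S$, which is what makes the formula come out with the clean symmetric coefficients in $H$ rather than the $\tfrac{1}{16}$ normalization one sometimes sees in the classical statement of Heron's formula.
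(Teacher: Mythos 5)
Your proof is correct: the reduction to $\langle u,v\rangle=\tfrac12(p+q-r)$ via polarization and the Lagrange identity $[u,v]^2=\langle u,u\rangle\langle v,v\rangle-\langle u,v\rangle^2$ both check out, and the final expansion gives exactly $H(p,q,r)$. The paper offers no proof of this proposition (it is treated as a classical fact verifiable ``by expressing the involved quantities in terms of the coordinates of the relevant points''), and your argument is precisely that coordinate computation, just organized cleanly through the Gram identity.
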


There is also a ``converse Heron theorem"  (Lemma~\ref{lem:recover-triangle} below). To state it properly, we need to introduce the group $\textup{Aut}(\AAA)$ of orientation-preserving isometries of~$\AAA$.

\begin{lem}
\label{lem:recover-triangle}
Given complex numbers $p,q,r,s$ satisfying $s^2=H(p,q,r)$, 
at least one of them nonzero, 
there exists a triangle $ABC$ in~$\AAA$ such that $x(A,B)=p$, $x(A,C)=q$, $x(B,C)=r$, and $S(A,B,C)=s$. 
Moreover such a triangle is unique up to the action of $\textup{Aut}(\AAA)$. 
\end{lem}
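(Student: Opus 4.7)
The plan is to reduce the problem to linear algebra in $\mathbf{V}$, establish existence by case analysis, and handle uniqueness via the $\operatorname{SO}(2,\CC)$-action on the resulting vector configurations. Using translation-invariance of $x$ and $S$, I normalize $A$ to be the origin of $\mathbf{V}$, so that $B = \overrightarrow{AB}$ and $C = \overrightarrow{AC}$ become vectors in $\mathbf{V}$. The four conditions then read
\[
\langle B,B\rangle = p, \quad \langle C,C\rangle = q, \quad \langle B,C\rangle = \tfrac{p+q-r}{2}, \quad [B,C] = \tfrac{s}{2},
\]
where the third follows by bilinearly expanding $r = \langle B-C, B-C\rangle$. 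The residual symmetry is $\operatorname{SO}(2,\CC)$. A direct coordinate calculation in $\CC^2$ yields the Lagrange-type identity $[B,C]^2 = \langle B,B\rangle\langle C,C\rangle - \langle B,C\rangle^2$; substituting the prescribed right-hand sides converts this into exactly $s^2 = H(p,q,r)$, so the hypothesized Heron equation is precisely the compatibility condition for the four equations above.

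For existence I would argue by cases. If $p\neq 0$, apply a rotation to put $B=(\alpha,0)$ for a chosen square root $\alpha^2=p$, after which the inner-product and volume-form equations force $C = \bigl(\tfrac{p+q-r}{2\alpha},\, \tfrac{s}{2\alpha}\bigr)$, and the Lagrange identity automatically yields $\langle C,C\rangle = q$. The subcase $p=0$, $q\neq 0$ is handled analogously by normalizing $C$ instead. The remaining subcase $p=q=0$ forces $s^2=-r^2$ via Heron's formula, and the non-triviality hypothesis forces $r\neq 0$. Both $B$ and $C$ are then nonzero isotropic vectors; since pairs on a common isotropic line $L_\pm = \{(t, \pm it)\}$ have both $\langle \cdot,\cdot\rangle$ and $[\cdot,\cdot]$ equal to zero, $B$ and $C$ must lie on opposite lines, and an explicit construction $B=(1,i)$, $C=-\tfrac{r}{4}(1,-i)$ works when $s=ir$, with the mirror choice when $s=-ir$.

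For uniqueness I would proceed in parallel. In the cases where $p\neq 0$ or $q\neq 0$, the level set $\{v\in\mathbf{V} : \langle v,v\rangle = \lambda\}$ for $\lambda\neq 0$ is a single $\operatorname{SO}(2,\CC)$-orbit, so the two candidate vectors with this inner-product value are matched by a rotation; after this matching, the linear map $w\mapsto (\langle B,w\rangle, [B,w])$ has determinant $\langle B,B\rangle\neq 0$ and hence is invertible, forcing the other vector to coincide. In the isotropic case $p=q=0$, the choice of isotropic line for $B$ is recorded by the sign of $s/(ir)$ via the direct computation of $\langle B,C\rangle$ and $[B,C]$ on $L_+\times L_-$, and then $\operatorname{SO}(2,\CC)$ acts by arbitrary nonzero scaling on each $L_\pm\setminus\{0\}$, bringing $B$ to a common normalized form and uniquely determining $C$. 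The main obstacle is this isotropic case: the generic argument fails because $\operatorname{SO}(2,\CC)$ does not act transitively on isotropic vectors and the map $w\mapsto(\langle B,w\rangle,[B,w])$ degenerates; one must separately verify that the data $(p,q,r,s)$ distinguishes the two isotropic lines via the explicit relation $s=\mp ir$.
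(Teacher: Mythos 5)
Your proof is correct, and its core is the same as the paper's: kill translations by placing $A$ at the origin, observe that Heron's equation is exactly the Lagrange identity $[B,C]^2=\langle B,B\rangle\langle C,C\rangle-\langle B,C\rangle^2$ for the prescribed data, and in the generic case solve the resulting invertible linear system for $C$ after rotating $B$ to a normal form (this is precisely the paper's Lemma~\ref{lem:recover-triangle-1}). The one place you genuinely diverge is the degenerate case $p=q=0$, $r\neq 0$: you analyze it head-on via the two isotropic lines of $\langle\cdot,\cdot\rangle$, checking that the sign of $s/(ir)$ records which line contains $\overrightarrow{AB}$ and that $\operatorname{SO}(2,\CC)$ acts simply transitively on each punctured line. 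The paper instead disposes of this case implicitly by relabeling: since $s^2=H(p,q,r)$ forces at least one of $p,q,r$ to be nonzero, one can always take the edge with nonzero squared length as the base and invoke Lemma~\ref{lem:recover-triangle-1} after a cyclic permutation of the vertices (using $S_{ijk}=S_{jki}$). Your version costs a page of explicit isotropic-vector bookkeeping but makes the degenerate geometry visible; the paper's relabeling is slicker but leaves that case to the reader. Both arguments are complete and compatible, and your verification that same-line isotropic pairs would force $r=0$ is exactly the point that justifies the paper's implicit reduction.
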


\begin{proof}
We note that $\textup{Aut}(\AAA) = \textup{SO}(\VVV) \ltimes T(\VVV)$, where $T(\VVV)$ is the group of translations by an element of~$\VVV$. 
Since $\textup{SO}(\VVV)$ acts freely and transitively on the unit sphere in $\VVV$, the claim will follow from Lemma~\ref{lem:recover-triangle-1} below.
\phantom\qedhere
\end{proof}

\begin{lem}
\label{lem:recover-triangle-1}
Given $A,B\in\AAA$ with $x(A,B)=p\neq 0$, and three numbers $q,r,s\in\CC$ \linebreak[3] satisfying $s^2=H(p,q,r)$, there exists a unique $C\in\AAA$ such that $x(A,C)=q$, $x(B,C)=r$, and $S(A,B,C)=s$. 
\end{lem}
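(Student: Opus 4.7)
The plan is to translate everything into vector language, reduce to a linear problem, and check that Heron's relation is exactly what makes the resulting unique candidate have the correct squared distance.

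First I would set $u = \overrightarrow{AB}$ (so $\langle u,u\rangle = p$) and look for $C$ in the form $C = A + v$, where $v = \overrightarrow{AC}\in\VVV$ is to be determined. Then $\overrightarrow{BC} = v - u$, so the three required measurements translate into the system
\[
\langle v,v\rangle = q,\qquad \langle v-u,v-u\rangle = r,\qquad 2[u,v] = s.
\]
Polarizing the second equation using the first gives $\langle u,v\rangle = \tfrac{1}{2}(p+q-r)$. So the problem becomes: find $v\in\VVV$ with
\[
\langle u,v\rangle = \tfrac{p+q-r}{2},\qquad [u,v] = \tfrac{s}{2},\qquad \langle v,v\rangle = q.
\]

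Next I would observe that the first two equations are linear in $v$, and that the linear map $\phi\colon \VVV \to \CC^2$, $v \mapsto (\langle u,v\rangle,\,[u,v])$, has matrix $\bigl[\begin{smallmatrix} u' & u'' \\ -u'' & u' \end{smallmatrix}\bigr]$ with determinant $(u')^2+(u'')^2 = \langle u,u\rangle = p \neq 0$. Hence $\phi$ is an isomorphism, and there is a unique $v\in\VVV$ solving the two linear conditions. This gives existence and uniqueness of a candidate $C$.

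It remains to verify that this candidate automatically satisfies $\langle v,v\rangle = q$. The key ingredient is the Lagrange-type identity
\[
\langle u,v\rangle^2 + [u,v]^2 = \langle u,u\rangle \langle v,v\rangle,
\]
which one checks by a short direct computation in coordinates. Substituting the known values of $\langle u,v\rangle$ and $[u,v]$ yields
\[
\langle v,v\rangle = \frac{1}{p}\Bigl(\tfrac{(p+q-r)^2}{4} + \tfrac{s^2}{4}\Bigr),
\]
and expanding shows that this equals $q$ if and only if $s^2 = 4pq - (p+q-r)^2 = H(p,q,r)$, which is our hypothesis. I do not anticipate a serious obstacle; the only mildly nontrivial point is the Lagrange identity, which is a routine Gram-determinant calculation on $\CC^2$.
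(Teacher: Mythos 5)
Your proof is correct and follows essentially the same route as the paper: both reduce the problem to the linear system obtained by subtracting the two squared-distance equations and pairing with the area equation, and both use $p\neq 0$ to solve it uniquely. The only difference is cosmetic: where the paper writes down the explicit solution for $v$ and says the remaining check is straightforward, you verify the leftover quadratic condition via the Lagrange identity $\langle u,v\rangle^2+[u,v]^2=\langle u,u\rangle\langle v,v\rangle$, which cleanly isolates where the hypothesis $s^2=H(p,q,r)$ is used.
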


\begin{proof}
Let $u=\overrightarrow{AB}=\left[\begin{smallmatrix}u'\\ u''\end{smallmatrix}\right]$.
We want to find a vector $v=\overrightarrow{AC}=\left[\begin{smallmatrix}v'\\ v''\end{smallmatrix}\right]$ satisfying 
\begin{align}
\label{eq:recover-triangle1}
(v')^2 + (v'')^2 &= q,\\
\label{eq:recover-triangle2}
(u'-v')^2 + (u''-v'')^2 &= r,\\
\label{eq:recover-triangle3}
2(u'v''-u''v') &= s.
\end{align}
Subtracting \eqref{eq:recover-triangle2} from~\eqref{eq:recover-triangle1} gives a linear equation in the unknowns $v'$ and~$v''$. Together with \eqref{eq:recover-triangle3}, this yields 
$v'=\frac{u'(p+q-r)-u''s}{2p}$ and $v''=\frac{u''(p+q-r)+u's}{2p}$. 
It is straightforward to check that we get a solution to
\eqref{eq:recover-triangle1}--\eqref{eq:recover-triangle3}. 
\end{proof}

\begin{defn}
\label{defn:polygon}
A labeled \emph{polygon} (specifically an $n$-gon) in~$\AAA$ is an ordered $n$-tuple of \emph{vertices} $P=(A_1,\dots,A_n)\in\AAA^n$,
with $n\ge 3$. 
Such a polygon gives rise to the \emph{measurements}
\begin{alignat}{3}
\label{eq:xij(P)}
x_{ij}=x_{ij}(P)&=x(A_i,A_j), 
\\
\label{eq:Sijk(P)}
S_{ijk}=S_{ijk}(P)&=S(A_i,A_j,A_k), 
\end{alignat}
for all distinct $i,j,k\in\{1,\dots,n\}$. 
We denote by 
\begin{equation}
\label{eq:z(P)}
\xS(P)=(x_{ij})\sqcup (S_{ijk})
\end{equation}
the labeled collection of all these measurements. 
This collection of numbers (or, depending on the point of view, functions on the configuration space~$\AAA^n$) satisfies many identities, including the obvious symmetries 
\begin{align*}
x_{ij}&=x_{ji} \\
S_{ijk}&=-S_{ikj}=-S_{jik}=S_{jki}=S_{kij}=-S_{kji}
\end{align*}
and the Heron equations 
\begin{equation}
\label{eq:Heron-ijk-1}
S_{ijk}^2 =H(x_{ij}, x_{jk}, x_{ik})
\end{equation}
(cf.\ \eqref{eq:Hpqr}). 
The full list of relations satisfied by the $x_{ij}$'s and $S_{ijk}$'s 
is given by the ``second fundamental theorem'' of invariant theory for the special orthogonal group~$\operatorname{SO}(2,\CC)$, see \cite[Section~II.17]{weyl1946}. 
\end{defn}

\begin{defn}
\label{defn:triangulation-graph}
A \emph{triangulated cycle} (or simply a \emph{triangulation}, when the context allows) 
is a particular kind of unoriented simple graph~$G$ on $n$ vertices $1,\dots,n$. Such a graph must have $2n-3$ edges: $n$~\emph{sides} $\{1,2\}, \{2,3\},\dots,\{n-1,n\},\{1,n\}$ 
forming a distinguished $n$-cycle, 
together with $n-3$ non-side edges called \emph{diagonals}. The key requirement is that $G$ must possess a planar realization of the following kind: take a convex $n$-gon on the real plane with vertices cyclically labeled $1,\dots,n$, triangulate it by diagonals, and consider the resulting~graph. 

More generally, in what follows, any pair $\{i,j\}$ of non-adjacent distinct vertices on the distinguished $n$-cycle will be called a \emph{diagonal}.

We note that each diagonal in a triangulation~$G$ belongs to exactly two \emph{triangles}, i.e., $K_3$-subgraphs of~$G$. 
\end{defn}

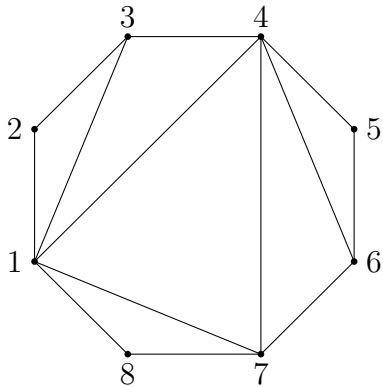
\begin{figure}[ht]
\begin{center}
\setlength{\unitlength}{2.5pt}
\begin{picture}(50,50)(0,-2)
\multiput(14,0)(0,48){2}{\line(1,0){20}}
\multiput(0,14)(48,0){2}{\line(0,1){20}}
\multiput(0,14)(34,34){2}{\line(1,-1){14}}
\multiput(0,34)(34,-34){2}{\line(1,1){14}}
\put(0,14){\line(14,34){14}}
\put(0,14){\line(1,1){34}}
\put(0,14){\line(34,-14){34}}
\put(34,0){\line(0,1){48}}
\put(48,14){\line(-14,34){14}}

\multiput(14,0)(0,48){2}{\circle*{1}}
\multiput(34,0)(0,48){2}{\circle*{1}}
\multiput(0,14)(48,0){2}{\circle*{1}}
\multiput(0,34)(48,0){2}{\circle*{1}}

\put(-3,14){\makebox(0,0){$1$}}
\put(-3,34){\makebox(0,0){$2$}}
\put(51,14){\makebox(0,0){$6$}}
\put(51,34){\makebox(0,0){$5$}}
\put(14,-3){\makebox(0,0){$8$}}
\put(34,-3){\makebox(0,0){$7$}}
\put(14,51){\makebox(0,0){$3$}}
\put(34,51){\makebox(0,0){$4$}}

\end{picture}
\end{center}
\caption{A triangulated $n$-cycle, $n=8$.}
\label{fig:triangulated-octagon}
\vspace{-.2in}
\end{figure}

\begin{defn}
\label{defn:triangulated-polygon}
A~\emph{triangulated polygon} $T=(P,G)$ is a polygon~$P$ as above together with a specific choice of a triangulation~$G$ as in Definition~\ref{defn:triangulation-graph}. 
Once this choice has been made, it makes sense to consider the labeled subcollection of measurements 
\[
\xS_G(P)=(x_{ij})\sqcup (S_{ijk}), 
\]
which only includes the values $x_{ij}$ corresponding to the sides and diagonals of~$T$
(in other words, the edges $\{i,j\}$ of the graph~$G$), and the signed areas $S_{ijk}$ corresponding to the triangles of the triangulation~$G$.  
\end{defn}

\begin{example}
\label{example:quadrilateral}
The simplest nontrivial case is $n=4$.
A quadrilateral $(A_1,A_2,A_3,A_4)$ has two triangulations, involving diagonals $A_1A_3$ and $A_2A_4$, respectively. 
Figure~\ref{fig:quadrilateral-ijkl-abcdef-pqrs} shows these two triangulations, along with their respective measurement data, which involve the measurements
\begin{alignat}{11}
\label{eq:abcd-ABCD}
a&=x_{14}\,,\quad & b&=x_{12}\,,\quad & c&=x_{23}\,,\quad &d&=x_{34}\,, \quad
&e&=x_{13}\,,\quad & f&=x_{24}\,,\quad \\
\label{eq:pqrs-ABCD}
 p&=S_{123}\,,\quad& q&=S_{134}\,,\quad & r&=S_{124}\,,\quad &s&=S_{234}\,.
\end{alignat}
\end{example}

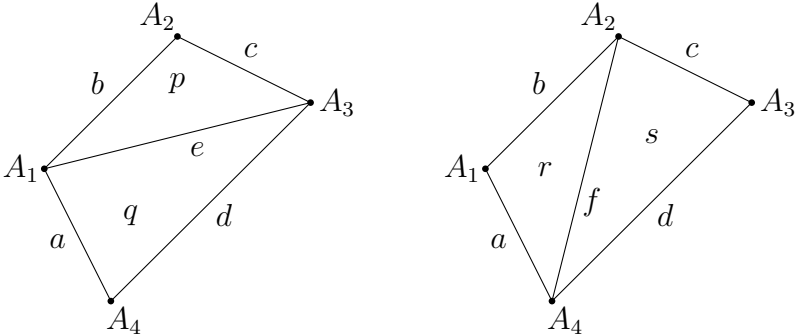
\begin{figure}[ht]
\begin{center}
\setlength{\unitlength}{2.5pt}
\begin{picture}(60,45)(0,9)
\put(10,30){\line(1,-2){10}}
\put(10,30){\line(1,1){20}}
\put(10,30){\line(4,1){40}}
\put(50,40){\line(-2,1){20}}
\put(50,40){\line(-1,-1){30}}
\put(10,30){\circle*{1}}
\put(50,40){\circle*{1}}
\put(20,10){\circle*{1}}
\put(30,50){\circle*{1}}
\put(6.5,30){\makebox(0,0){$A_1$}}
\put(27,53){\makebox(0,0){$A_2$}}
\put(54,40){\makebox(0,0){$A_3$}}
\put(22,7){\makebox(0,0){$A_4$}}
\put(18,43){\makebox(0,0){$b$}}
\put(12,19){\makebox(0,0){$a$}}
\put(37,23){\makebox(0,0){$d$}}
\put(33,33){\makebox(0,0){$e$}}
\put(41,48){\makebox(0,0){$c$}}
\put(30,43){\makebox(0,0){$p$}}
\put(23,23){\makebox(0,0){$q$}}
\end{picture}
\quad
\begin{picture}(60,45)(0,9)
\put(10,30){\line(1,-2){10}}
\put(10,30){\line(1,1){20}}
\put(50,40){\line(-2,1){20}}
\put(50,40){\line(-1,-1){30}}
\put(20,10){\line(1,4){10}}
\put(10,30){\circle*{1}}
\put(50,40){\circle*{1}}
\put(20,10){\circle*{1}}
\put(30,50){\circle*{1}}
\put(6.5,30){\makebox(0,0){$A_1$}}
\put(27,53){\makebox(0,0){$A_2$}}
\put(54,40){\makebox(0,0){$A_3$}}
\put(22,7){\makebox(0,0){$A_4$}}
\put(18,43){\makebox(0,0){$b$}}
\put(12,19){\makebox(0,0){$a$}}
\put(37,23){\makebox(0,0){$d$}}
\put(41,48){\makebox(0,0){$c$}}
\put(26,25){\makebox(0,0){$f$}}
\put(19,30){\makebox(0,0){$r$}}
\put(35,35){\makebox(0,0){$s$}}
\end{picture}
\end{center}
\caption{Measurement data for two triangulations of a plane quadrilateral.}
\label{fig:quadrilateral-ijkl-abcdef-pqrs}
\vspace{-.2in}
\end{figure}

\begin{cor}
\label{cor:triangulation-recovery}
Let $G$ be a triangulated $n$-cycle,
cf.\ Definition~\ref{defn:triangulation-graph}. 
Let 
\[
\xS=(x_{ij})\sqcup (S_{ijk})
\] 
be a collection of complex numbers labeled by the edges $\{i,j\}$ and the oriented triangles $(i,j,k)$ of~$G$. 
Assume that Heron's equation~\eqref{eq:Heron-ijk-1} holds 
for each triangle $(i,j,k)$ in~$G$, and furthermore $x_{ij}\neq0$ for each diagonal $\{i,j\}$ in~$G$. 
Then there exists an $n$-gon~$P$ with $\xS_G(P)=\xS$. 
Moreover $P$ is unique up to the action of $\textup{Aut}(\AAA)$.  In~particular, all the measurements in $\xS(P)$ are uniquely determined~by~$\xS_G(P)$. 
\end{cor}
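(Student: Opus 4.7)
My plan is to construct $P$ vertex by vertex by traversing the dual tree of the triangulation~$G$. The dual graph has the triangles of~$G$ as nodes and its diagonals as edges; since~$G$ is a triangulation of a convex $n$-gon, this dual graph is a tree on $n-2$ nodes.

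First I would choose a root triangle $T_0$. For $n\ge 4$, every triangulation contains a triangle incident to at least one diagonal $\{i,j\}$; picking such a $T_0$, the hypothesis gives $x_{ij}\neq 0$, and Lemma~\ref{lem:recover-triangle} realizes $T_0$ as $(A_i,A_j,A_k)$ in~$\AAA$, unique up to $\textup{Aut}(\AAA)$. (For $n=3$, either Lemma~\ref{lem:recover-triangle} applies directly, or all four measurements vanish and three coincident points provide a realization unique up to translation.) Then I would traverse the dual tree outward from $T_0$, processing one new triangle $T'$ at a time. The edge that $T'$ shares with its parent is necessarily a diagonal $\{i',j'\}$ of the distinguished $n$-cycle (boundary sides lie in only one triangle), so $x_{i'j'}\neq 0$, and Lemma~\ref{lem:recover-triangle-1} uniquely determines the position of the new vertex $A_\ell$ subject to the prescribed values $x_{i'\ell}$, $x_{j'\ell}$, and $S_{i'j'\ell}$.

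After every triangle has been processed, the resulting polygon $P=(A_1,\dots,A_n)$ satisfies $\xS_G(P)=\xS$ by construction. Uniqueness up to $\textup{Aut}(\AAA)$ follows by running the same procedure on any other realization: both agree on $T_0$ up to the group action, and each subsequent vertex is then pinned down by Lemma~\ref{lem:recover-triangle-1}. The closing assertion, that every entry of $\xS(P)$ is determined by $\xS_G(P)$, is immediate from this uniqueness combined with the $\textup{Aut}(\AAA)$-invariance of the functions $x(\cdot,\cdot)$ and $S(\cdot,\cdot,\cdot)$.

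The argument is essentially bookkeeping on top of Lemmas~\ref{lem:recover-triangle} and~\ref{lem:recover-triangle-1}, which befits a corollary; the only points needing mild attention are verifying that every edge internal to the dual tree is a diagonal (so that the nonvanishing hypothesis is available at each extension step) and handling the $n=3$ corner case noted above.
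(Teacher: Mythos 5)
Your proof is correct and takes essentially the same route as the paper, whose entire proof is the single line ``this follows by repeated application of Lemma~\ref{lem:recover-triangle}/Lemma~\ref{lem:recover-triangle-1}''; your dual-tree traversal, with the observation that every internal edge of the dual tree is a diagonal and hence has nonvanishing squared length, is exactly the bookkeeping that one-liner leaves implicit. (Your $n=3$ aside is the only wobble --- over $\CC$ the all-measurements-zero case admits non-coincident isotropic realizations, so uniqueness can genuinely fail there --- but the paper works with $n\ge 4$ throughout, so this does not affect the result.)
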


\begin{proof}
This follows by repeated application of Lemma~\ref{lem:recover-triangle}/Lemma~\ref{lem:recover-triangle-1}. 
\end{proof}

By Corollary~\ref{cor:triangulation-recovery}, a polygon can be uniquely recovered from the measurement data 
associated with an arbitrary triangulation (as long the diagonal lengths are nonzero). 
In particular, the measurement data coming from two different triangulations uniquely determine each other. 
It is natural to ask for an explicit description of the corresponding transition maps. Since any two triangulations can be connected by a sequence of \emph{flips} (cf.\  Definition~\ref{def:flipping} below), it suffices to understand the case of a quadrilateral. 

With notation \eqref{eq:abcd-ABCD}--\eqref{eq:pqrs-ABCD}, 
Corollary~\ref{cor:triangulation-recovery} (for $n=4$) asserts that the measurements $(a,b,c,d,e,p,q)$ determine $(a,b,c,d,f,r,s)$, and vice versa, provided $e\neq 0$ and $f\neq 0$. 
The next proposition describes this correspondence explicitly. 

\pagebreak[3]

\begin{prop}
\label{prop:identities-ABCD}
Let $(A_1,A_2,A_3,A_4)$ be a $4$-gon in~$\AAA$. 
Denote the associated $10$ measurements by $a,b,c,d,e,f,p,q,r,s$, 
as shown in \eqref{eq:abcd-ABCD}--\eqref{eq:pqrs-ABCD} and  Figure~\ref{fig:quadrilateral-ijkl-abcdef-pqrs}.~Then 
\begin{align}
\label{eq:heron-p}
p^2 &= H(b,c,e),\\
\label{eq:heron-q}
q^2 &= H(a,d,e),\\
\label{eq:heron-r}
r^2 &= H(a,f,b),\\
\label{eq:heron-s}
s^2 &= H(c,f,d),\\
\label{eq:additivity}
r+s&=p+q,\\
\label{eq:bretschneider}
4ef &= (p+q)^2 + (a-b+c-d)^2,\\
\label{eq:bilinear}
e(r-s) &= p(a-d) + q(b-c). 
\end{align}
\end{prop}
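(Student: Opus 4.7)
The plan is to fix $A_1$ as basepoint and work with the position vectors $u_i = \overrightarrow{A_1 A_i}$ for $i = 2,3,4$ in~$\VVV$, rewriting every one of the ten measurements in terms of the inner products $\langle u_i, u_j\rangle$ and the symplectic brackets $[u_i, u_j]$. Direct expansion of $x_{ij}=\langle \overrightarrow{A_i A_j}, \overrightarrow{A_i A_j}\rangle$ gives the diagonal identifications $\langle u_2, u_2\rangle = b$, $\langle u_3, u_3\rangle = e$, $\langle u_4, u_4\rangle = a$, $\langle u_4-u_2, u_4-u_2\rangle = f$, and polarization yields $2\langle u_2, u_3\rangle = b + e - c$, $2\langle u_3, u_4\rangle = e + a - d$, $2\langle u_2, u_4\rangle = a + b - f$. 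The signed areas unpack directly as $p = 2[u_2, u_3]$, $q = 2[u_3, u_4]$, $r = 2[u_2, u_4]$, and $s = 2[u_3-u_2, u_4-u_2] = 2[u_2,u_3] + 2[u_3,u_4] - 2[u_2,u_4]$. With this dictionary, identities \eqref{eq:heron-p}--\eqref{eq:heron-s} are direct restatements of Heron's formula (Proposition~2.1) applied to the four triangles, and the additivity \eqref{eq:additivity} falls out of the above expression for~$s$, since $r + s = 2[u_2, u_3] + 2[u_3, u_4] = p + q$.

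For \eqref{eq:bretschneider}, the key input is the Lagrange-type identity $[v, w]^2 + \langle v, w\rangle^2 = \langle v, v\rangle\langle w, w\rangle$, valid for any pair of vectors in the two-dimensional space~$\VVV$. I apply it with $v = u_3$ and $w = u_4 - u_2$. A short computation gives $[u_3, u_4 - u_2] = \tfrac{1}{2}(p + q)$ and $\langle u_3, u_4 - u_2\rangle = \tfrac{1}{2}((e+a-d) - (b+e-c)) = \tfrac{1}{2}(a - b + c - d)$, while $\langle u_3, u_3\rangle\langle u_4-u_2, u_4-u_2\rangle = ef$. Substituting and clearing the factor of $4$ delivers \eqref{eq:bretschneider} exactly.

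For \eqref{eq:bilinear}, the required input is the two-dimensional vector identity
\[
\langle w, w\rangle [v_1, v_2] = \langle w, v_2\rangle [v_1, w] + \langle v_1, w\rangle [w, v_2],
\]
which holds in~$\VVV$ by expanding $v_1$ in the orthogonal basis $\{w, w^\perp\}$. Specializing to $w = u_3$, $v_1 = u_2$, $v_2 = u_4$ converts the identity into $e\,[u_2, u_4] = [u_2, u_3]\langle u_3, u_4\rangle + [u_3, u_4]\langle u_2, u_3\rangle$. Now use \eqref{eq:additivity} to write $r - s = 2r - (p+q) = 4[u_2, u_4] - 2[u_2, u_3] - 2[u_3, u_4]$, and convert $a - d = 2\langle u_3, u_4\rangle - e$ and $b - c = 2\langle u_2, u_3\rangle - e$ via the polarization formulas; a direct substitution then turns the vector identity into \eqref{eq:bilinear}. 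The main obstacle is recognizing the two ambient identities in~$\VVV$ that drive \eqref{eq:bretschneider} and \eqref{eq:bilinear}; once they are in place, the rest is routine bookkeeping translating between the two languages of $\langle\cdot,\cdot\rangle$, $[\cdot,\cdot]$ on the one hand and the ten measurements $(a,b,c,d,e,f,p,q,r,s)$ on the other.
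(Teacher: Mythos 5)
Your proof is correct and takes essentially the same route as the paper, which simply asserts that all seven identities "can be verified by expressing the involved quantities in terms of the coordinates of the relevant points" and points to Heron's formula, area additivity, Bretschneider's formula, and Weyl's relations $J_3$ for $\operatorname{SO}(2)$. The two ambient identities you isolate — the Lagrange identity $[v,w]^2+\langle v,w\rangle^2=\langle v,v\rangle\langle w,w\rangle$ driving \eqref{eq:bretschneider}, and the three-vector identity $\langle w,w\rangle[v_1,v_2]=\langle w,v_2\rangle[v_1,w]+\langle v_1,w\rangle[w,v_2]$ driving \eqref{eq:bilinear} — are precisely the $\operatorname{SO}(2)$ relations the paper alludes to, so you have in effect supplied the computation the authors left implicit (note only that the "orthogonal basis $\{w,w^\perp\}$" justification should be replaced by the direct polynomial verification over $\CC$, where $\langle w,w\rangle$ may vanish for $w\neq 0$).
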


\begin{proof}
Each of these identities 
can be verified by expressing the involved quantities in terms of the coordinates of the relevant points on the plane. 
Equations~\eqref{eq:heron-p}--\eqref{eq:heron-s} are instances of Heron's formula. 
Equation~\eqref{eq:additivity} reflects the fact that the signed area of a quadrilateral 
can be obtained by cutting it into two triangles by either of the two diagonals,
and adding their areas. 
Equation~\eqref{eq:bretschneider} is known as \emph{Bretschneider's formula} 
for the (squared) area of a quadrilateral. 
Modulo~\eqref{eq:additivity}, equation~\eqref{eq:bilinear} can be interpreted as the $\operatorname{SO}(2)$ instance of 
\cite[Section~II.17, relations~$J_3$]{weyl1946}.
\end{proof}

Motivated by Proposition~\ref{prop:identities-ABCD}, we introduce the following notion. 

\begin{definition}
\label{defn:heronian-diamond}
A \emph{Heronian diamond} is an ordered 10-tuple of complex numbers 
$(a,b,c,d,e,f,p,q,r,s)$
satisfying equations \eqref{eq:heron-p}--\eqref{eq:bilinear}.
Instead of listing the components of a Heronian diamond
as a row of 10~numbers, we will typically arrange them 
in a diamond pattern, as shown in 
Figure~\ref{fig:heronian-diamond}. 
\end{definition}

\begin{rem}
\label{rem:heronian-diamond}
Proposition~\ref{prop:identities-ABCD} can be restated as saying that for any quadrilateral on the plane~$\AAA$, the associated 10~measurements 
($6$~squared distances 
and $4$~signed areas), when properly arranged, will form a Heronian diamond. 
\end{rem}

\begin{figure}[ht]
\includegraphics[scale=0.7]{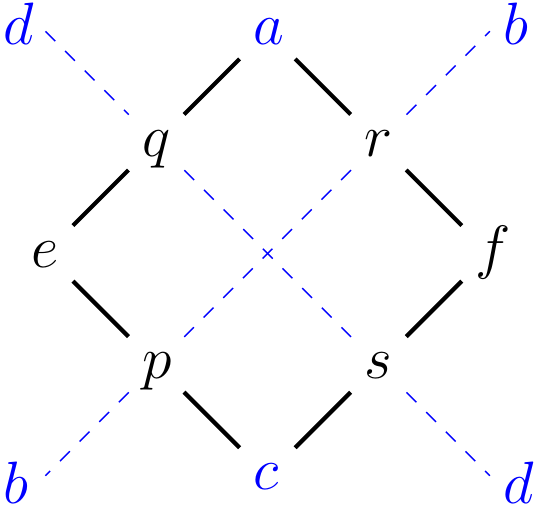}
\caption{A Heronian diamond. 
Here $b$ and $d$ are associated to the dashed lines extending the bimedians of the diamond.  The remaining 8 numbers are placed at the vertices of the diamond and at the midpoints of its sides.
}
\vspace{-.2in}
\label{fig:heronian-diamond}
\end{figure}

\begin{rem}
\label{rem:heronian-redundancy}
Some of the seven conditions \eqref{eq:heron-p}--\eqref{eq:bilinear} appearing in the definition of a Heronian diamond 
are redundant: it is easy to check that equations \eqref{eq:heron-p}--\eqref{eq:heron-q} (or equations \eqref{eq:heron-r}--\eqref{eq:heron-s}) follow from the remaining five. It is however convenient to work with all these seven conditions, for the sake of symmetry (cf.\ Proposition~\ref{prop:diamond-reflect} below) as well as conceptual clarity. 
\end{rem}

\begin{prop}
\label{prop:heron-propagates}
Let $(a,b,c,d,e,p,q)$ be a 7-tuple of complex numbers  satisfying equations~\eqref{eq:heron-p}--\eqref{eq:heron-q}.
Assume that $e \neq 0$. Then there exist unique $f,r,s\in\CC$ such that $(a,b,c,d,e,f,p,q,r,s)$ is a Heronian diamond.
Specifically, 
\begin{align}
\label{eq:f=}
f&=\frac{(p+q)^2 + (a-b+c-d)^2}{4e} ,\\[.1in]
\label{eq:r=}
r&=\frac{p(e+a-d)+q(e-c+b)}{2e},\\[.1in]
\label{eq:s=}
s&=\frac{p(e-a+d)+q(e+c-b)}{2e}. 
\end{align}
\end{prop}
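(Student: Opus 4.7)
The proof naturally splits into uniqueness (which is immediate linear algebra) and existence (which can be extracted from a geometric realization).

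\emph{Uniqueness.} Suppose $(a,b,c,d,e,f,p,q,r,s)$ is a Heronian diamond. Since $e \neq 0$, Bretschneider's relation \eqref{eq:bretschneider} solves uniquely for $f$, giving \eqref{eq:f=}. The remaining two relations \eqref{eq:additivity} and \eqref{eq:bilinear} form the $2 \times 2$ linear system
\[
r + s = p+q, \qquad r - s = \tfrac{p(a-d) + q(b-c)}{e},
\]
whose coefficient matrix has determinant $-2$. Solving gives exactly \eqref{eq:r=}--\eqref{eq:s=}.

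\emph{Existence.} The cleanest route is to build a quadrilateral whose measurements realize the prescribed data, and then invoke Proposition~\ref{prop:identities-ABCD}. Pick any two points $A_1, A_3 \in \AAA$ with $x(A_1,A_3) = e$, which is possible because $e \neq 0$. Since $H$ is symmetric in its three arguments, the hypothesis $p^2 = H(b,c,e)$ is the same as $(-p)^2 = H(e,b,c)$, so Lemma~\ref{lem:recover-triangle-1} produces a unique point $A_2 \in \AAA$ with $x(A_1,A_2) = b$, $x(A_3,A_2) = c$, and $S(A_1,A_3,A_2) = -p$; by skew-symmetry of $S$ this gives $S_{123} = p$. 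An analogous application of Lemma~\ref{lem:recover-triangle-1} to the data $(a,d,q)$ yields a unique $A_4$ with $x(A_1,A_4) = a$, $x(A_3,A_4) = d$, and $S_{134} = q$. Proposition~\ref{prop:identities-ABCD} applied to the quadrilateral $(A_1,A_2,A_3,A_4)$ now shows that its full 10-tuple of measurements, which extends $(a,b,c,d,e,p,q)$, satisfies all of \eqref{eq:heron-p}--\eqref{eq:bilinear} and is therefore a Heronian diamond. By the uniqueness part, its extending entries must be the ones given by \eqref{eq:f=}--\eqref{eq:s=}.

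The only delicate step is sign bookkeeping when applying Lemma~\ref{lem:recover-triangle-1} to construct $A_2$ from the data attached to the triple $(A_1, A_3, \,\cdot\,)$ (as opposed to $(A_1, A_2, \,\cdot\,)$ in the original statement), which is why we use $-p$ rather than $p$ for the signed area. A purely algebraic alternative would be to verify directly that \eqref{eq:heron-r}--\eqref{eq:heron-s} follow from \eqref{eq:heron-p}, \eqref{eq:heron-q}, \eqref{eq:additivity}, \eqref{eq:bretschneider}, \eqref{eq:bilinear}, establishing Remark~\ref{rem:heronian-redundancy} by expansion; this is doable but less transparent than the geometric route.
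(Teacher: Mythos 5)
Your proposal is correct. The uniqueness half is exactly the paper's argument: with $e\neq 0$, relation~\eqref{eq:bretschneider} is linear in $f$, and \eqref{eq:additivity} together with \eqref{eq:bilinear} form an invertible linear system in $(r,s)$, yielding \eqref{eq:f=}--\eqref{eq:s=}. For existence, however, you take a genuinely different route. The paper defines $f,r,s$ by these formulas and then asserts that the remaining Heron relations \eqref{eq:heron-r}--\eqref{eq:heron-s} can be checked algebraically, deferring to the redundancy claim of Remark~\ref{rem:heronian-redundancy} (this is the ``purely algebraic alternative'' you mention at the end). You instead realize the data geometrically: two applications of Lemma~\ref{lem:recover-triangle-1} across the segment $A_1A_3$ of squared length $e\neq 0$ produce a quadrilateral whose measurements extend $(a,b,c,d,e,p,q)$, and Proposition~\ref{prop:identities-ABCD} then certifies all seven diamond relations at once; your sign bookkeeping ($S(A_1,A_3,A_2)=-p$ so that $S_{123}=p$, while $S_{134}=q$ needs no flip) is correct, and there is no circularity since Lemma~\ref{lem:recover-triangle-1} and Proposition~\ref{prop:identities-ABCD} are established independently by coordinate computations. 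Your route buys a transparent existence proof that avoids expanding the redundancy identity by hand, and it is essentially the $n=4$ case of Corollary~\ref{cor:triangulation-recovery} combined with Remark~\ref{rem:heronian-diamond}; the paper's route keeps the proposition purely algebraic, which matters for the later reading of Heronian diamonds as formal objects detached from geometry, but in substance both are sound.
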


\begin{proof}
We get \eqref{eq:f=} from~\eqref{eq:bretschneider}, 
and \eqref{eq:r=}--\eqref{eq:s=} from \eqref{eq:additivity} and~\eqref{eq:bilinear}. 
One then checks that  \eqref{eq:heron-r}--\eqref{eq:heron-s} are satisfied, cf.\ Remark~\ref{rem:heronian-redundancy}.
\end{proof}

Reflecting a Heronian diamond in a horizontal or vertical axis of symmetry produces a Heronian diamond. 
More precisely: 

\begin{prop}
\label{prop:diamond-reflect}
Let $(a,b,c,d,e,f,p,q,r,s)$ be a Heronian diamond. 
Then 
\begin{itemize}
\item
$(c,d,a,b,e,f,q,p,s,r)$ is a Heronian diamond;
\item
if $e\ne 0$, then $(a,d,c,b,f,e,s,r,q,p)$ is a Heronian diamond. 
\end{itemize}
\end{prop}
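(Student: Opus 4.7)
The first symmetry is verified by direct substitution: the map $(a,b,c,d,e,f,p,q,r,s) \mapsto (c,d,a,b,e,f,q,p,s,r)$ interchanges the Heron equations \eqref{eq:heron-p} $\leftrightarrow$ \eqref{eq:heron-q} and \eqref{eq:heron-r} $\leftrightarrow$ \eqref{eq:heron-s} via the symmetry of $H$ in its arguments, preserves \eqref{eq:additivity} trivially, preserves \eqref{eq:bretschneider} because $a - b + c - d$ is sent to itself, and turns \eqref{eq:bilinear} into its own negation. No genericity hypothesis is needed for this part.

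For the second symmetry $(a,b,c,d,e,f,p,q,r,s) \mapsto (a,d,c,b,f,e,s,r,q,p)$, the Heron equations are again permuted (now \eqref{eq:heron-p} $\leftrightarrow$ \eqref{eq:heron-s} and \eqref{eq:heron-q} $\leftrightarrow$ \eqref{eq:heron-r}) by $H$-symmetry, equation \eqref{eq:additivity} is manifestly preserved, and \eqref{eq:bretschneider} is preserved because $a - d + c - b = a - b + c - d$. The only nontrivial identity to check is the bilinear one, which for the reflected tuple reads
\[
f(q - p) = s(a - b) + r(d - c).
\]
This is where the hypothesis $e \ne 0$ enters and where the main obstacle lies.

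To establish this identity, I will realize the given Heronian diamond geometrically. Using $e \ne 0$, two applications of Lemma~\ref{lem:recover-triangle-1} produce a quadrilateral $P = (A_1, A_2, A_3, A_4)$ with $x(A_1, A_3) = e$ in which the triangle $(A_1, A_2, A_3)$ realizes the data $(b, c, e, p)$ and $(A_1, A_3, A_4)$ realizes $(a, d, e, q)$; by Remark~\ref{rem:heronian-diamond} and the uniqueness clause of Proposition~\ref{prop:heron-propagates}, the full measurement data $\xS(P)$ coincides with the original ten-tuple. Now cyclically re-index the vertices of $P$ as $P' = (A_2, A_3, A_4, A_1)$. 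A direct computation using the symmetries of $x$ and $S$ identifies $\xS(P')$ with the Heronian diamond $(b, c, d, a, f, e, s, r, p, q)$. Applying the bilinear identity \eqref{eq:bilinear} of Proposition~\ref{prop:identities-ABCD} to $P'$ yields $f(p - q) = s(b - a) + r(c - d)$, which, after multiplication by $-1$, is exactly the desired relation.
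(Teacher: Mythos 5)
Your proof is correct, but for the only nontrivial step -- the bilinear identity for the vertically reflected tuple -- you take a genuinely different route from the paper. The paper stays entirely inside the algebra of the seven diamond equations: it computes $4ef(p-q)$ by a chain of substitutions using \eqref{eq:bretschneider}, \eqref{eq:heron-p}--\eqref{eq:heron-q}, \eqref{eq:additivity} and \eqref{eq:bilinear}, arrives at $4er(c-d)+4es(b-a)$, and divides by $4e$. You instead use $e\neq 0$ to realize the diamond geometrically via two applications of Lemma~\ref{lem:recover-triangle-1} (with the uniqueness clause of Proposition~\ref{prop:heron-propagates} forcing the constructed quadrilateral's remaining measurements to equal $f,r,s$), and then read off the desired identity as the instance of \eqref{eq:bilinear} from Proposition~\ref{prop:identities-ABCD} for the cyclically relabeled quadrilateral $(A_2,A_3,A_4,A_1)$. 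I checked the relabeling: $\xS(P')$ does arrange into the diamond $(b,c,d,a,f,e,s,r,p,q)$, whose bilinear relation is $f(p-q)=s(b-a)+r(c-d)$, the negation of what you need -- so the step is sound, and there is no circularity since Lemma~\ref{lem:recover-triangle-1}, Proposition~\ref{prop:identities-ABCD} and Proposition~\ref{prop:heron-propagates} all precede this statement. Your argument is more conceptual (it exhibits the mysterious seventh identity as the old bilinear identity seen from a rotated labeling of the same quadrilateral, and it explains why the orientation-reversing relabeling would not work directly, forcing the hybrid treatment of the six easy equations by permutation); the paper's is self-contained formal algebra that never leaves the axioms. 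Notably, the geometric-realization device you use is essentially the one the paper itself deploys later in the proof of Lemma~\ref{lem:heronian-partial-menger}. Both arguments use $e\neq 0$ in an essential and equivalent way, so the hypotheses match exactly.
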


\begin{proof}
The first statement is easy: as a result of the interchanges $a\leftrightarrow c$, $b\leftrightarrow d$, $p\leftrightarrow q$, and $r\leftrightarrow s$, the identities \eqref{eq:heron-p}--\eqref{eq:bilinear} get permuted among themselves. 

The second reflection, across a vertical line, interchanges $d\leftrightarrow b$, $q\leftrightarrow r$, $e\leftrightarrow f$, and $p\leftrightarrow s$. 
Again, the identities \eqref{eq:heron-p}--\eqref{eq:bretschneider} get permuted---but \eqref{eq:bilinear} is replaced~by
\begin{equation}
\label{eq:f(p-q)}
f(p-q) = r(c-d) + s(b-a). 
\end{equation}
Thus, we need to deduce \eqref{eq:f(p-q)} from~\eqref{eq:heron-p}--\eqref{eq:bilinear}. 
It will be convenient to denote
$g=a - b + c - d$.
We then obtain: 
\begin{align*}
    &4ef(p-q) \\
\text{by \eqref{eq:bretschneider}}\    =&((p+q)^2+g^2)(p-q)\\
   =&(p^2-q^2)(p+q)+g^2(p-q) \\
\text{by \eqref{eq:heron-p},\eqref{eq:heron-q}}\   =&(H(b,c,e) - H(a,d,e))(p+q)+g^2(p-q) \\
=&((a+b-c-d)g + 2e(-a+b+c-d))(p+q)+g^2(p-q) \\
=&g((a\!+\!b\!-\!c\!-\!d)(p\!+\!q)\!+\!g(p-q)) \!+\!2e(-a\!+\!b\!+\!c\!-\!d)(p\!+\!q) \\
\text{by \eqref{eq:additivity}}\  =&g(2p(a-d)+2q(b-c))+2e(r+s)(-a+b+c-d) \\
\text{by \eqref{eq:bilinear}}\  =& 2e(r-s)(a-b+c-d)+2e(r+s)(-a+b+c-d) \\
=&4er(c-d)+4es(b-a). 
\end{align*}
Dividing by $4e$ (here we use that $e\neq 0$), we get~\eqref{eq:f(p-q)}. 
\end{proof}

\begin{cor}
\label{cor:heronian-propagate}
In a Heronian diamond $(a,b,c,d,e,f,p,q,r,s)$, once the components $a,b,c,d$ (shown in blue in Figure~\ref{fig:heronian-diamond}) have been fixed, the values $e,p,q$ determine $f,r,s$ uniquely (provided $e\neq 0$), and vice versa (provided $f\neq 0$). 
\end{cor}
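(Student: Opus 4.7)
The plan is to treat the two directions separately. The forward direction---fixing $a,b,c,d,e,p,q$ with $e\neq 0$ and recovering $f,r,s$---is essentially a restatement of Proposition~\ref{prop:heron-propagates}. Since the tuple $(a,b,c,d,e,f,p,q,r,s)$ is a Heronian diamond, the subtuple $(a,b,c,d,e,p,q)$ automatically satisfies \eqref{eq:heron-p}--\eqref{eq:heron-q}. Together with $e\neq 0$, this is exactly the hypothesis of Proposition~\ref{prop:heron-propagates}, which produces the unique values $f,r,s$ via the explicit formulas \eqref{eq:f=}--\eqref{eq:s=}. Any $f,r,s$ compatible with the original Heronian diamond must agree with these.

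For the reverse direction I would exploit the involutive symmetry $\sigma\colon (a,b,c,d,e,f,p,q,r,s)\mapsto (a,d,c,b,f,e,s,r,q,p)$ identified in Proposition~\ref{prop:diamond-reflect}. The defining relations of a Heronian diamond are invariant under $\sigma$: equations \eqref{eq:heron-p}--\eqref{eq:bretschneider} get permuted among themselves, and the image of \eqref{eq:bilinear} under $\sigma$ is precisely \eqref{eq:f(p-q)}, which the proof of Proposition~\ref{prop:diamond-reflect} shows to be equivalent to \eqref{eq:bilinear} modulo the other axioms. Under $\sigma$, the role of the ``$e$''-slot is played by $f$, which is nonzero by hypothesis. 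Applying the forward direction to the reflected diamond then yields uniqueness of its ``$f,r,s$''-slots in terms of $a,d,c,b,f,s,r$, i.e., uniqueness of $(e,p,q)$ in terms of $a,b,c,d,f,r,s$. Equivalently, one writes down the $\sigma$-image of formulas \eqref{eq:f=}--\eqref{eq:s=},
\begin{align*}
e &= \frac{(r+s)^2 + (a-b+c-d)^2}{4f},\\
p &= \frac{s(f-a+b) + r(f+c-d)}{2f},\\
q &= \frac{s(f+a-b) + r(f-c+d)}{2f},
\end{align*}
and checks that these yield the original $e,p,q$.

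The main subtlety is that Proposition~\ref{prop:diamond-reflect} as stated requires $e\neq 0$ to apply $\sigma$, whereas here we only assume $f\neq 0$. To navigate this cleanly, I would avoid invoking Proposition~\ref{prop:diamond-reflect} as a black box and instead argue directly. The value of $e$ is uniquely recovered from \eqref{eq:bretschneider} (using $p+q=r+s$ from \eqref{eq:additivity} and $f\neq 0$); then $p+q$ is known, and the pair $(p,q)$ is pinned down by pairing $p+q=r+s$ with \eqref{eq:bilinear}, whose matrix has determinant $-(a-b+c-d)$. The remaining possibility that $a-b+c-d=0$, where this linear system degenerates, forces $(p+q)^2=0$ by \eqref{eq:bretschneider} and can be handled as a separate small case using \eqref{eq:heron-p}--\eqref{eq:heron-q}, which determine $p^2$ and $q^2$ from the already-recovered $e$.
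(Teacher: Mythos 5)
Your forward direction, together with the reflection idea for the converse, is exactly the paper's proof (which consists of the single line ``combine Propositions~\ref{prop:heron-propagates} and~\ref{prop:diamond-reflect}''). You are also right to flag the mismatch of hypotheses: the second bullet of Proposition~\ref{prop:diamond-reflect} assumes $e\neq 0$, whereas the converse of the corollary only assumes $f\neq 0$, and Heronian diamonds with $e=0\neq f$ do exist over $\CC$ --- for instance $(a,b,c,d,e,f,p,q,r,s)=(1,2,4,0,0,6,-2i,-i,-i,-2i)$. So attempting a direct argument is a reasonable move, and your recovery of $e$ from \eqref{eq:additivity} and \eqref{eq:bretschneider}, and of $(p,q)$ from the linear system with determinant $-(a-b+c-d)$, is correct in the generic case.

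The gap is in your treatment of the degenerate case $a-b+c-d=0$. The claim that this ``forces $(p+q)^2=0$ by \eqref{eq:bretschneider}'' is false: \eqref{eq:bretschneider} then reads $4ef=(p+q)^2$, and $e$ need not vanish --- the unit square $a=b=c=d=1$, $e=f=2$, $p=q=r=s=2$ has $a-b+c-d=0$ and $p+q=4$. More seriously, your fallback does not suffice: knowing $p^2$, $q^2$ and $p+q$ fails to determine the ordered pair $(p,q)$ precisely when $p+q=0$ and $p^2=q^2\neq 0$, since $(p_0,-p_0)$ and $(-p_0,p_0)$ are then both consistent with \eqref{eq:heron-p}, \eqref{eq:heron-q}, \eqref{eq:additivity}, and also with \eqref{eq:bilinear}, which under $a-d=b-c$ collapses to $e(r-s)=(a-d)(p+q)$ and says nothing about $p-q$. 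To close this case one must bring in \eqref{eq:heron-r}--\eqref{eq:heron-s}: if $p+q=r+s=0$ and $f\neq 0$, then \eqref{eq:bretschneider} gives $e=0$, and $r=-s$ forces $H(a,f,b)=H(c,f,d)$; when $a-d=b-c$ this difference equals $4f(a-d)$, whence $a=d$ and $b=c$, and then $p^2=H(b,c,0)=-(b-c)^2=0$, so the ambiguity never materializes. Without an argument of this kind, uniqueness of $(p,q)$ in the degenerate case is not established, so as written the proof is incomplete.
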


\begin{proof}
Combine Propositions~\ref{prop:heron-propagates} and~\ref{prop:diamond-reflect}.
\end{proof}

The next two lemmas will be needed in Section~\ref{sec:heron}. 

\begin{lem}[{\rm Heronian diamonds with $a=q=r=0$}] 
\label{lem:diamond-0-top}
Complex numbers 
\[
0, b, c, d, e, f, p, 0, 0, s
\]
form a Heronian diamond if and only if
\begin{align}
\label{eq:heron-p-again}
p^2 &= H(b,c,e),\\
\label{eq:d=e}
d&=e, \\
\label{eq:f=b}
f&=b, \\
\label{eq:s=p}
s&=p. 
\end{align}
\end{lem}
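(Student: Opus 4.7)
The plan is to substitute $a=q=r=0$ into each of the seven defining equations~\eqref{eq:heron-p}--\eqref{eq:bilinear} and read off what they become. Since each direction of the ``if and only if'' amounts to checking the same list of seven equations under the stated substitutions, the proof should be essentially a bookkeeping exercise.

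First I would handle the forward direction. Under $a=0$, equation~\eqref{eq:heron-q} reads $q^2=H(0,d,e)$; a direct expansion using~\eqref{eq:Hpqr} gives $H(0,d,e)=-(d-e)^2$, so the hypothesis $q=0$ forces $d=e$, yielding~\eqref{eq:d=e}. Similarly, $r=0$ together with $a=0$ in~\eqref{eq:heron-r} gives $H(0,f,b)=-(f-b)^2=0$, hence $f=b$, which is~\eqref{eq:f=b}. Equation~\eqref{eq:heron-p-again} is just~\eqref{eq:heron-p} restated. Finally, \eqref{eq:additivity} collapses to $s=p$, which is~\eqref{eq:s=p}.

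For the converse, I would assume \eqref{eq:heron-p-again}--\eqref{eq:s=p} and verify each of \eqref{eq:heron-p}--\eqref{eq:bilinear} with $a=q=r=0$. Equations \eqref{eq:heron-p}, \eqref{eq:heron-q}, \eqref{eq:heron-r} are immediate (the latter two because $d=e$ and $f=b$ make $H(0,d,e)$ and $H(0,f,b)$ vanish). Equation~\eqref{eq:heron-s} follows since $H$ is symmetric in its arguments: $s^2=p^2=H(b,c,e)=H(f,c,d)$. Equation~\eqref{eq:additivity} reduces to $s=p$, and~\eqref{eq:bilinear} becomes $-es=-pd$, which holds because $s=p$ and $d=e$.

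The only computation that is not pure substitution is the verification of~\eqref{eq:bretschneider}, which becomes the claim that
\[
4eb=p^2+(c-b-e)^2.
\]
Expanding the right-hand side gives $p^2+c^2+b^2+e^2-2bc-2ce+2be$, and since $p^2=H(b,c,e)=-b^2-c^2-e^2+2bc+2be+2ce$, the terms collapse to $4be$. This is the ``main'' step, but it is a one-line expansion rather than a genuine obstacle.
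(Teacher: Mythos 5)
Your proof is correct and follows essentially the same route as the paper: substitute $a=q=r=0$ into the seven defining equations, use $H(0,d,e)=-(d-e)^2$ and $H(0,f,b)=-(f-b)^2$ to get $d=e$ and $f=b$, and reduce Bretschneider's formula to the identity $4eb-(-b+c-e)^2=H(b,c,e)$. The only difference is organizational—the paper writes both directions at once as a chain of equivalences, whereas you treat the two implications separately.
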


\begin{proof}
Under the assumptions $a=q=r=0$, we have 
\[
\eqref{eq:heron-p}\!\!-\!\!\eqref{eq:bilinear} 
\Longleftrightarrow
\begin{cases}
&\hspace{-3mm}p^2= H(b,c,e)\\
&\hspace{-3mm}0=H(0,d,e)=-(d-e)^2\\
&\hspace{-3mm}0=H(0,f,b)=-(f-b)^2\\
&\hspace{-3mm}s^2=H(c,f,d)\\
&\hspace{-3mm}s=p\\
&\hspace{-3mm}4ef=p^2+(-b+c-d)^2\\
&\hspace{-3mm}es=dp
\end{cases}
\Longleftrightarrow
\begin{cases}
&\hspace{-3mm}p^2= H(b,c,e)\\
&\hspace{-3mm}d=e\\
&\hspace{-3mm}f=b\\
&\hspace{-3mm}s^2=H(c,b,e)\\
&\hspace{-3mm}s=p\\
&\hspace{-3mm}p^2=4eb-(-b+c-e)^2.
\end{cases}
\]
Since $4eb-(-b+c-e)^2=H(b,c,e)$, the claim follows.
\end{proof}

\begin{lem}[{\rm Heronian diamonds with $c=p=s=0$}] 
\label{lem:diamond-0-bottom}
Complex numbers 
\[
a, b, 0, d, e, f, 0, q, r, 0
\]
form a Heronian diamond if and only if
\begin{align}
\label{eq:heron-q-again}
q^2 &= H(a,d,e),\\
\label{eq:b=e}
b&=e, \\
\label{eq:f=d}
f&=d, \\
\label{eq:r=q}
r&=q. 
\end{align}
\end{lem}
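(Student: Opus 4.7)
The plan is to deduce this lemma from Lemma~\ref{lem:diamond-0-top} by invoking the horizontal reflection symmetry of Heronian diamonds established in Proposition~\ref{prop:diamond-reflect}. Specifically, the first part of Proposition~\ref{prop:diamond-reflect} asserts that if $(a,b,c,d,e,f,p,q,r,s)$ is a Heronian diamond, then so is $(c,d,a,b,e,f,q,p,s,r)$, and this operation is clearly an involution.

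Applying this involution to the 10-tuple $(a,b,0,d,e,f,0,q,r,0)$ yields the 10-tuple $(0,d,a,b,e,f,q,0,0,r)$, which is of the form $(0,\tilde b,\tilde c,\tilde d,\tilde e,\tilde f,\tilde p,0,0,\tilde s)$ with $\tilde b=d$, $\tilde c=a$, $\tilde d=b$, $\tilde e=e$, $\tilde f=f$, $\tilde p=q$, $\tilde s=r$. By Lemma~\ref{lem:diamond-0-top} (applied to the tilded variables), this is a Heronian diamond if and only if
\[
\tilde p^{\,2}=H(\tilde b,\tilde c,\tilde e),\qquad \tilde d=\tilde e,\qquad \tilde f=\tilde b,\qquad \tilde s=\tilde p.
\]
Translating back via $H(d,a,e)=H(a,d,e)$ (using the symmetry of $H$ in all three arguments, evident from \eqref{eq:Hpqr}), these four conditions become exactly \eqref{eq:heron-q-again}--\eqref{eq:r=q}.

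I do not expect any genuine obstacle: the entire argument is a mechanical application of a stated symmetry followed by a stated lemma. The only small point worth double-checking is that Proposition~\ref{prop:diamond-reflect} is an if-and-only-if (which it is, since the transformation is an involution), so that both directions of the equivalence transfer cleanly. Alternatively, one could simply mimic the calculation in the proof of Lemma~\ref{lem:diamond-0-top} by substituting $c=p=s=0$ directly into \eqref{eq:heron-p}--\eqref{eq:bilinear} and simplifying; this would avoid invoking Proposition~\ref{prop:diamond-reflect} but would essentially repeat the same casework.
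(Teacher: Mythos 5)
Your proof is correct and matches the paper's own argument, which states that one may either repeat the computation of Lemma~\ref{lem:diamond-0-top} or combine that lemma with Proposition~\ref{prop:diamond-reflect}; you have simply carried out the second option in detail, and the bookkeeping (including the use of the symmetry of $H$ and the involutive nature of the reflection) checks out.
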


\begin{proof}
The proof is completely analogous to the proof of Lemma~\ref{lem:diamond-0-top}. 
Alternatively, combine Lemma~\ref{lem:diamond-0-top} with Proposition~\ref{prop:diamond-reflect}. 
\end{proof}

\begin{cor}
\label{cor:heronian-propagate-near-boundary}
In a Heronian diamond $(a,b,c,d,e,f,p,q,r,s)$ with $a=q=r=0$, the values $e,b,p$ determine $f,d,s$ uniquely, and vice versa. 
In a Heronian diamond  $(a,b,c,d,e,f,p,q,r,s)$ with $c=p=s=0$, the values $d,e,q$ determine $b,f,r$ uniquely, and vice versa. 
\end{cor}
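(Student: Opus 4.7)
The plan is to deduce this corollary directly from the two lemmas that immediately precede it; I expect no genuine obstacle and no new computation beyond what those lemmas already do.

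For the first statement (the case $a=q=r=0$), I would invoke Lemma~\ref{lem:diamond-0-top}, which, applied to the Heronian diamond $(0,b,c,d,e,f,p,0,0,s)$, extracts the three equalities $d=e$, $f=b$, $s=p$ (together with the Heron relation $p^2=H(b,c,e)$ involving the auxiliary parameter~$c$, which is automatically inherited from the assumption that one is inside a Heronian diamond and therefore does not interfere with the propagation). Reading these equalities in one direction, the triple $(e,b,p)$ determines $(f,d,s)=(b,e,p)$; reading them in the other direction, $(f,d,s)$ determines $(e,b,p)=(d,f,s)$. This proves both the forward and the backward uniqueness asserted in the first claim.

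For the second statement (the case $c=p=s=0$), the same plan works verbatim with Lemma~\ref{lem:diamond-0-bottom} playing the role of Lemma~\ref{lem:diamond-0-top}: the three equalities $b=e$, $f=d$, $r=q$ read off from that lemma give the bijection $(d,e,q)\leftrightarrow(b,f,r)$ in both directions.

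If anything is to be flagged as the ``hard part,'' it is only the verification of the two lemmas themselves (already established in the text); the corollary proper is just the observation that each of those iff statements is engineered so that three of its six remaining parameters determine the other three transparently, with no quadratic ambiguity of the sort resolved in Corollary~\ref{cor:heronian-propagate}.
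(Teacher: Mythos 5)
Your proposal is correct and matches the paper's intent exactly: the paper states this corollary without a separate proof, treating it as an immediate consequence of Lemmas~\ref{lem:diamond-0-top} and~\ref{lem:diamond-0-bottom}, which is precisely the reading-off of the equalities $d=e$, $f=b$, $s=p$ (resp.\ $b=e$, $f=d$, $r=q$) that you carry out. Nothing further is needed.
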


\pagebreak[3]

\section{Heronian friezes}
\label{sec:heron}

Remark~\ref{rem:heronian-diamond} 
implies the following statement. 

\begin{prop}
\label{prop:identities-ijkl}
Let $P\!=\!(A_1,\dots,A_n)$ be a polygon in~$\AAA$. 
For any four vertices $A_i,A_j,A_k,A_\ell$ of~$P$, the corresponding 10~measurements form a Heronian diamond shown in Figure~\ref{fig:heronian-diamond-xS}. 
More explicitly, the measurements in 
$\xS(P)\!=\!(x_{ij})\sqcup (S_{ijk})$
(cf.\ Definition~\ref{defn:polygon}) satisfy the following identities,  
for any distinct $i,j,k,\ell\in\{1,\dots,n\}$:
\begin{align}
\label{eq:Heron-ijk-2}
S_{ijk}^2 &= H(x_{ij}, x_{jk}, x_{ik}) ,\\
\label{eq:split-ijkl}
S_{ijk} + S_{ik\ell} &= S_{ij\ell} + S_{jk\ell} ,\\
\label{eq:Bretschneider-ijk}
4x_{ik}x_{j\ell} &= (S_{ijk}+S_{ik\ell})^2 + (x_{ij}-x_{jk}+x_{k\ell}-x_{i\ell})^2 ,\\
\label{eq:bilinear-ijk}
x_{ik}(S_{ij\ell} - S_{jk\ell}) &= S_{ijk}(x_{i\ell} - x_{k\ell}) + S_{ik\ell}(x_{ij}-x_{jk}). 
\end{align}
\end{prop}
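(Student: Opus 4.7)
The plan is to reduce the statement to the quadrilateral case, which has already been established in Proposition~\ref{prop:identities-ABCD} (equivalently, Remark~\ref{rem:heronian-diamond}). The key observation is that $x(A,B)$ and $S(A,B,C)$ depend only on the points named as arguments, not on any ambient configuration. Consequently, for any distinct $i,j,k,\ell\in\{1,\dots,n\}$, the 10 measurements built from $A_i,A_j,A_k,A_\ell$ inside $\xS(P)$ coincide with the 10 measurements one would associate with the ordered 4-gon $(A_i,A_j,A_k,A_\ell)$ viewed in isolation.

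First, I would apply Proposition~\ref{prop:identities-ABCD} to this auxiliary 4-gon, which immediately produces a Heronian diamond whose entries are precisely the measurements involving the four chosen vertices of~$P$. What then remains is pure bookkeeping: spell out the dictionary
\[
a = x_{i\ell},\ b = x_{ij},\ c = x_{jk},\ d = x_{k\ell},\ e = x_{ik},\ f = x_{j\ell},\ p = S_{ijk},\ q = S_{ik\ell},\ r = S_{ij\ell},\ s = S_{jk\ell}
\]
obtained from \eqref{eq:abcd-ABCD}--\eqref{eq:pqrs-ABCD} by replacing $(1,2,3,4)$ with $(i,j,k,\ell)$, and verify that the four claimed identities each correspond to one of the defining equations of a Heronian diamond.

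Under this dictionary: \eqref{eq:heron-p} becomes \eqref{eq:Heron-ijk-2}; \eqref{eq:additivity} becomes \eqref{eq:split-ijkl}; \eqref{eq:bretschneider} becomes \eqref{eq:Bretschneider-ijk} (noting that $(x_{i\ell}-x_{ij}+x_{jk}-x_{k\ell})^2 = (x_{ij}-x_{jk}+x_{k\ell}-x_{i\ell})^2$, so the sign does not matter); and \eqref{eq:bilinear} becomes \eqref{eq:bilinear-ijk}. The remaining diamond equations \eqref{eq:heron-q}--\eqref{eq:heron-s} are not discarded but simply reappear as additional instances of \eqref{eq:Heron-ijk-2} applied to the other triangles in $\{A_i,A_j,A_k,A_\ell\}$, so they require no separate argument.

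There is no genuine obstacle here; the proposition is essentially a relabeling of Proposition~\ref{prop:identities-ABCD} quantified over all 4-element subsets of the vertex set, and the only subtlety is keeping the cyclic order of the subscripts $i,j,k,\ell$ consistent with the roles of sides versus diagonals in the diamond.
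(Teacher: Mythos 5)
Your proposal is correct and matches the paper's argument exactly: the paper derives Proposition~\ref{prop:identities-ijkl} in one line from Remark~\ref{rem:heronian-diamond}, i.e., by applying Proposition~\ref{prop:identities-ABCD} to the quadrilateral $(A_i,A_j,A_k,A_\ell)$ and relabeling, which is precisely your reduction. Your dictionary and the sign observation for \eqref{eq:Bretschneider-ijk} are the correct bookkeeping that the paper leaves implicit.
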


\begin{figure}[ht]
\vspace{-.1in}
\includegraphics[scale=0.75]{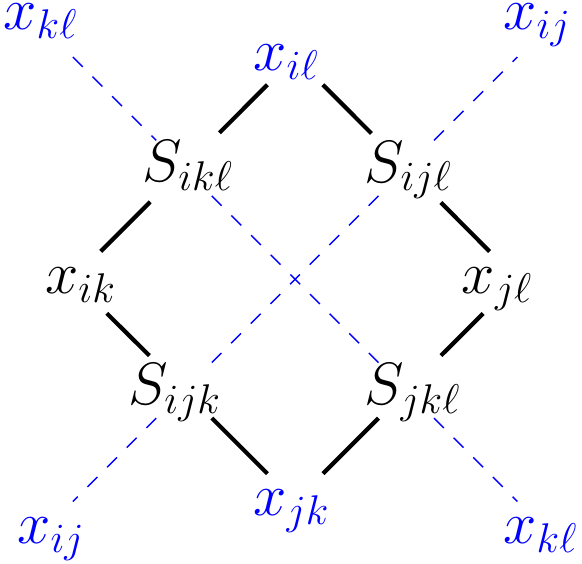}
\caption{A Heronian diamond for a quadruple of vertices with labels $i,j,k,\ell$.
}
\vspace{-.1in}
\label{fig:heronian-diamond-xS}
\end{figure}

Motivated by Figure~\ref{fig:heronian-diamond-xS}, we introduce the notion of a Heronian frieze, cf.\ Definition~\ref{defn:heronian-frieze} below. 
Informally, a Heronian frieze is a collection of numbers arranged in a pattern shown in Figure~\ref{fig:heronian-frieze-1}, and satisfying the Heronian diamond equations for all diamonds in the pattern (plus some additional conditions near the upper and lower boundaries). 
We next proceed to a formal definition. 


\begin{figure}[ht]
\begin{center}
\includegraphics[height=2in]{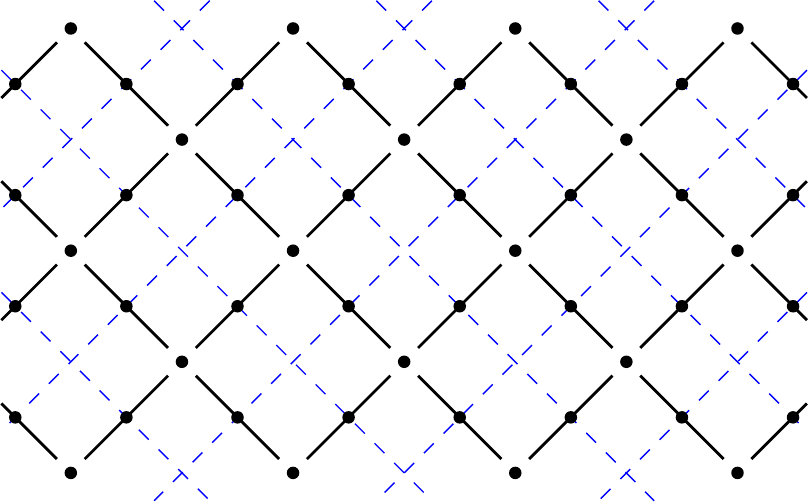}
\end{center}
\caption{The combinatorial pattern underlying a Heronian frieze of order~$n=4$.}
\label{fig:heronian-frieze-1}
\end{figure}

We begin by introducing the relevant indexing sets. 

\begin{defn}
\label{defn:heronian-indexing}
For $n\ge 4$, let $N_n$ and $L_n$ be the sets defined by
\begin{align}
\label{eq:Nn}
N_n&= \bigl\{(i,j) \in (\ZZ\times\tfrac12\ZZ)\cup (\tfrac12\ZZ\times\ZZ)
\colon 0 \leq j-i \leq n\bigr\},\\
\label{eq:Ln}
L_n&= \bigl\{(i+\tfrac12,\neline)\colon i \in \ZZ\bigr\}
   \cup\bigl\{(\seline,j+\tfrac12)\colon j \in\ZZ\bigr\}. 
\end{align}
The (disjoint) union $I_n=N_n\cup L_n$ will serve as the indexing set for the Heronian friezes. 
We visualize this set as follows, see Figure~\ref{fig:frieze-indices}. 
We interpret $\ZZ^2$ as the set of integer points for the coordinate system whose axes are rotated clockwise by $\pi/4$ with respect to the usual placement. 
The~indices in $N_n$ (``the nodes'') are the points $(i,j)$ in the strip \hbox{$0 \leq j-i \leq n$} whose coordinates $i,j$ are half-integers, with at least one of them an integer. 
The indices in $L_n$ (``the lines'') represent straight lines parallel to the coordinate axes, with half-integer offsets. 

We will refer to the indices $(i,j) \in N_n$ with $1 \leq j-i \leq n-1$ as the \emph{interior} nodes of~$N_n$. 
\end{defn}

\begin{figure}[ht]
\begin{center}
\includegraphics[scale=0.8]{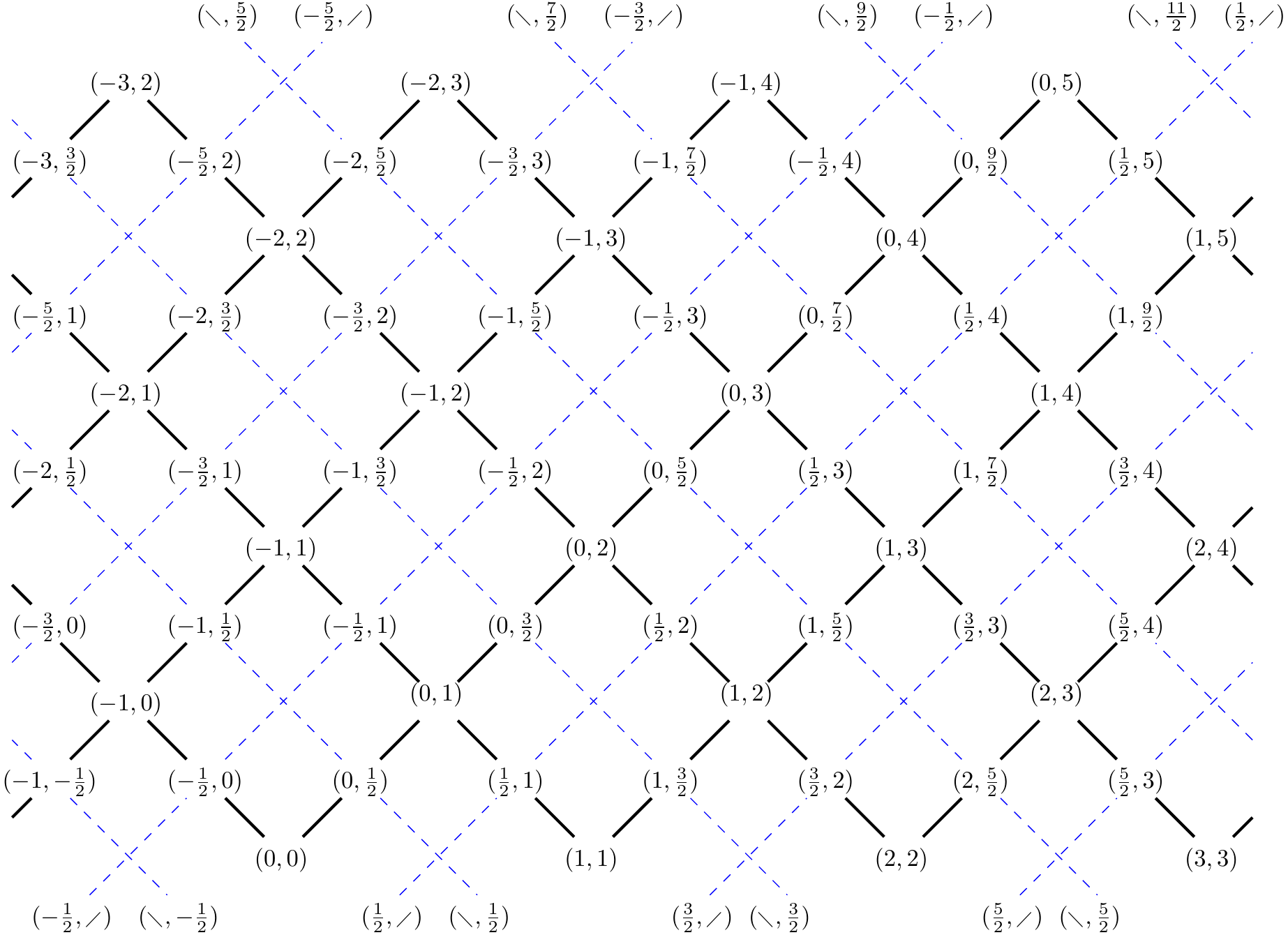}
\end{center}
\caption{The indexing set for a Heronian frieze of order~$n=5$. The indices in  $L_n$ correspond to the dashed lines; see the top and bottom~rows of the picture. 
The remaining 11~rows of indices constitute the set of nodes~$I_n$. The middle 7~rows are the interior nodes.}
\label{fig:frieze-indices}
\end{figure}

\begin{definition}
\label{defn:heronian-frieze}
A \emph{Heronian frieze} of order $n\ge 4$ is an array 
$\mathbf{z} = (z_{\alpha})_{\alpha \in I_n}$
of complex numbers indexed by the set $I_n$ 
(see Definition~\ref{defn:heronian-indexing}) 
which satisfies the following local conditions. 
The main condition is that for every 10-tuple of indices shown in Figure~\ref{fig:heronian-frieze-blowup} (with $(i,j)\in N_n\cap\ZZ^2$ an interior node), we require the corresponding 10~entries 
\[
(z_{(i,j+1)}, z_{(i+\frac12,\smallneline)}, z_{(i+1,j)}, z_{(\smallseline,j+\frac12)}, z_{(i,j)}, z_{(i+1,j+1)}, z_{(i+\frac12,j)}, z_{(i,j+\frac12)} ,z_{(i+\frac12,j+1)}, z_{(i+1,j+\frac12)})
\]
to form a Heronian diamond. (For a dictionary between this notation and the notation in Definition~\ref{defn:heronian-diamond}, compare Figures~\ref{fig:heronian-diamond} and~\ref{fig:heronian-frieze-blowup}.)
In addition, we impose the boundary conditions
\begin{align}
\label{eq:border-equalities}
z_{(i,i)} = z_{(i,i+n)} = z_{(i,i+\frac12)} = z_{(i,i+n-\frac12)} = 0 \quad (i\in\ZZ). 
\end{align}
\end{definition}

\begin{figure}[ht]
\begin{center}
\includegraphics[height=2.1in]{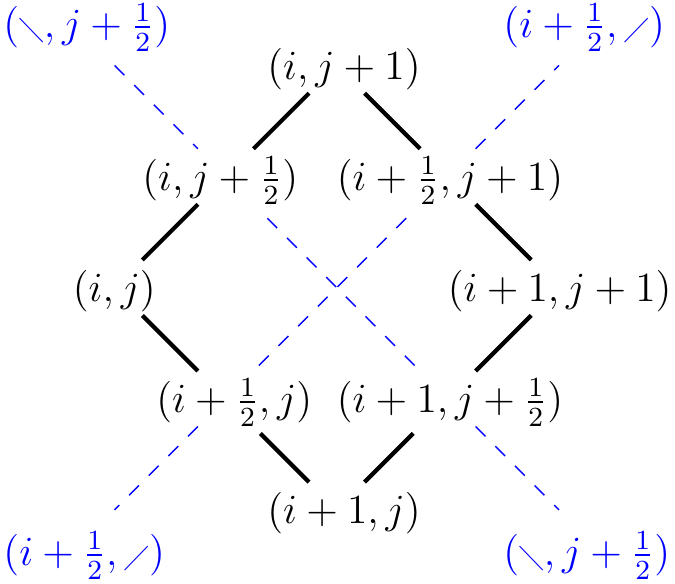}
\vspace{-.1in}
\end{center}
\caption{Indexing set for a diamond in a Heronian frieze. Here $1\leq j-i \leq n-1$.}
\label{fig:heronian-frieze-blowup}
\end{figure}

The notion of a Heronian frieze simplifies under the assumption that all entries indexed by the elements of the set $L_n$ (see~\eqref{eq:Ln}) are equal to each other. (This assumption mirrors the analogous condition traditionally imposed on the classical Coxeter friezes.) We next present the self-contained version of  Definition~\ref{defn:heronian-frieze} in this restricted generality. 

\begin{defn} 
\label{defn:equilateral-heronian-frieze}
Let $b$ be a nonzero complex number. A Heronian frieze of order~$n$ is called \emph{equilateral}  (with the lateral parameter~$b$) if $z_{(i+\frac12,\smallneline)}\!=\!z_{(\smallseline,i+\frac12)}\!=\!b$ for all~$i$. 
Such a frieze can be thought of as an array 
$\mathbf{z} = (z_{(i,j)})_{(i,j) \in N_n}$
of complex \hbox{numbers} indexed by the set $N_n$ (see~\eqref{eq:Nn}) and satisfying the boundary conditions~\eqref{eq:border-equalities} together with the 
following relations, which hold for every node $(i,j)\in\ZZ^2$ with $1\le j-i\le n-1$: 
\begin{align*}
z_{(i+\frac12,j)}^2&=H(b,z_{(i,j)},z_{(i+1,j)}); \\
z_{(i,j+\frac12)}^2&=H(b,z_{(i,j)},z_{(i,j+1)}); \\
z_{(i+\frac12,j)}+z_{(i,j+\frac12)}&=z_{(i+\frac12,j+1)}+z_{(i+1,j+\frac12)};\\
4z_{(i,j)}z_{(i+1,j+1)} &=(z_{(i+\frac12,j)}+z_{(i,j+\frac12)})^2
  +(z_{(i+1,j)}+z_{(i,j+1)}-2b)^2;\\
z_{(i,j)} (z_{(i+\frac12,j+1)}-z_{(i+1,j+\frac12)})&=
   z_{(i+\frac12,j)}(z_{(i,j+1)}-b) +z_{(i,j+\frac12)} (b-z_{(i+1,j)}). 
\end{align*}
An example of an equilateral Heronian frieze (with $b=1$) is shown in Figure~\ref{fig:heronian-frieze-example}. 
\end{defn}

\begin{figure}[ht]
\begin{center}
\includegraphics[scale=0.8]{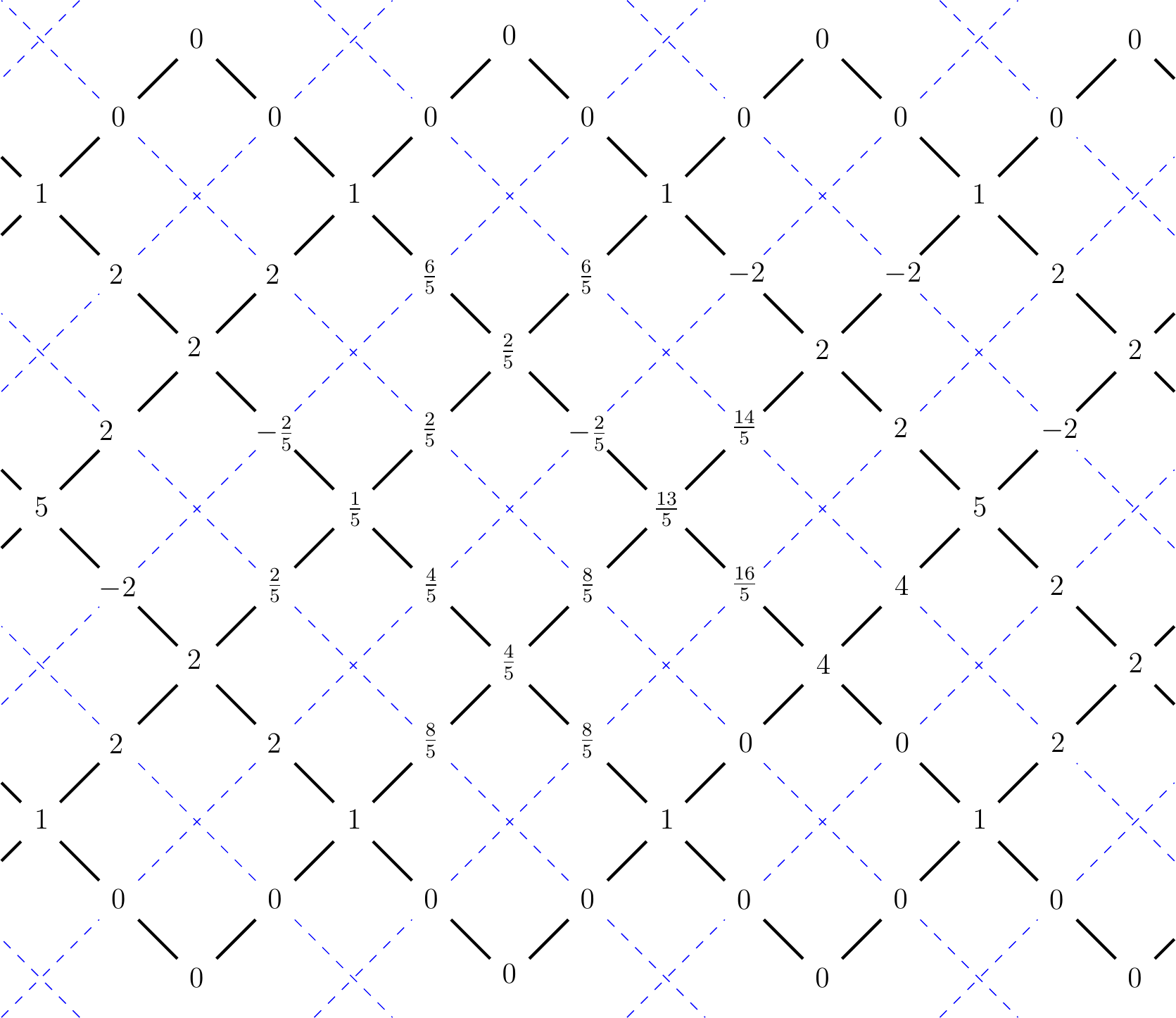}
\vspace{-.1in}
\end{center}
\caption{A fragment of an (equilateral) Heronian frieze of order~6.
The entries associated with the dashed lines (i.e., the ones indexed by the elements of~$L_n$) are all equal~to~1. 
}
\vspace{-.2in}
\label{fig:heronian-frieze-example}
\end{figure}

\pagebreak[3]

The boundary conditions \eqref{eq:border-equalities} imply the following identities. 

\begin{prop}
Let $\mathbf{z} = (z_\alpha)_{\alpha \in I_n}$ be a Heronian frieze of order $n$. Then 
\begin{alignat}{3}
\label{eq:partial-diamond-1}
z_{(i,i+1)} &= z_{(i+\frac12,\smallneline)} = z_{(\smallseline,i+\frac12)}\quad &&(i \in \ZZ),\\
\label{eq:partial-diamond-2}
z_{(i,i+n-1)} &= z_{(i-\frac12,\smallneline)} = z_{(\smallseline,i+n-\frac12)}\quad &&(i \in \ZZ).
\end{alignat}
\end{prop}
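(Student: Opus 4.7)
The plan is to derive each of the two identities by inspecting a single Heronian diamond sitting just inside the relevant boundary of the frieze: the diamond at $(i, i+1)$ for~\eqref{eq:partial-diamond-1} and the diamond at $(i, i+n-1)$ for~\eqref{eq:partial-diamond-2}. In each case, the boundary conditions~\eqref{eq:border-equalities} will force two of the ten entries of the diamond to vanish, and the remaining diamond relations will yield both required equalities (up to an index shift).

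For~\eqref{eq:partial-diamond-1}, I would label the entries of the diamond at $(i, i+1)$ as $(a, b, c, d, e, f, p, q, r, s)$ following Definition~\ref{defn:heronian-frieze}. The boundary conditions immediately give $c = z_{(i+1,i+1)} = 0$ and $s = z_{(i+1, i+\frac32)} = 0$, so \eqref{eq:heron-s} becomes $0 = -(f - d)^2$ and hence $f = d$. In frieze notation this reads $z_{(i+1, i+2)} = z_{(\seline, i+\frac32)}$, which is the $\seline$-identity of~\eqref{eq:partial-diamond-1} at index $i+1$. For the $\neline$-identity, I would combine \eqref{eq:additivity} (which gives $r = p + q$ since $s = 0$), \eqref{eq:heron-r} (which gives $r^2 = H(a, d, b)$ since $f = d$), and~\eqref{eq:bretschneider}. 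A short expansion confirms the key algebraic identity
\[
H(a, d, b) + (a - b - d)^2 = 4bd,
\]
so \eqref{eq:bretschneider} collapses to $4ef = 4bd$; using $f = d$ once more, this becomes $d(b - e) = 0$. Under the generic nonvanishing $d \neq 0$, we conclude $b = e$, i.e., $z_{(i+\frac12, \neline)} = z_{(i, i+1)}$, the $\neline$-identity of~\eqref{eq:partial-diamond-1} at index $i$. Ranging over $i \in \ZZ$ covers both families in~\eqref{eq:partial-diamond-1}.

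The argument for~\eqref{eq:partial-diamond-2} is dual. In the diamond at $(i, i+n-1)$ the boundary conditions give $a = z_{(i, i+n)} = 0$ and $q = z_{(i, i+n-\frac12)} = 0$, and \eqref{eq:heron-q} forces $d = e$, which is the $\seline$-identity at index $i$. Combining \eqref{eq:heron-p}, \eqref{eq:additivity} (now $p = r + s$), and~\eqref{eq:bretschneider}, together with the mirrored algebraic identity $H(b, c, e) + (c - b - e)^2 = 4be$, yields $4ef = 4be$ and hence $e(f - b) = 0$; under $e \neq 0$ we get $f = b$, which is the $\neline$-identity at index $i+1$.

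The principal obstacle is the genuinely degenerate situation in which the nonvanishing hypothesis fails (namely $d = 0$ for the top-boundary argument, or $e = 0$ for the bottom one): then the above algebraic reduction yields only $0 = 0$, and the equalities $b = e$ or $f = b$ cannot be extracted from a single diamond. These nonvanishing conditions align with the hypothesis $e \neq 0$ in Proposition~\ref{prop:heron-propagates}, and in the degenerate case I would propagate constraints through adjacent diamonds or argue by specialization from a generic frieze.
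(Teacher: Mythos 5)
Your derivation of the $\seline$-identities is sound and matches the paper's mechanism: with two entries of the boundary diamond forced to vanish, the Heron relation \eqref{eq:heron-s} (resp.\ \eqref{eq:heron-q}) degenerates to $0=H(0,f,d)=-(f-d)^2$ (resp.\ $0=H(0,d,e)=-(d-e)^2$), so $f=d$ (resp.\ $d=e$) with no genericity needed. The problem is the $\neline$-identities. The proposition carries no nonvanishing hypothesis, yet your route to $b=e$ (via \eqref{eq:additivity}, \eqref{eq:heron-r} and \eqref{eq:bretschneider}) only yields $d(b-e)=0$, and the analogous route to $f=b$ only yields $e(f-b)=0$. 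The quantities you need to be nonzero ($d=z_{(\smallseline,i+\frac32)}$ in the first case, $e=z_{(i,i+n-1)}$ in the second) are ordinary frieze entries with no reason to be nonzero, and your two suggested escapes do not close the gap: ``propagating through adjacent diamonds'' is not carried out, and ``specialization from a generic frieze'' would require knowing that the locus of Heronian friezes is irreducible (or at least that every frieze is a limit of generic ones), which is not established anywhere and is exactly the kind of fact one cannot assume for a variety cut out by the diamond equations.

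The fix is that you undercounted the vanishing entries. For the diamond at the node $(i,i+1)$, the boundary conditions kill not two but three entries: $c=z_{(i+1,i+1)}$, $s=z_{(i+1,i+\frac32)}$, \emph{and} $p=z_{(i+\frac12,i+1)}$ (a node on the row $j-i=\frac12$). With $c=p=0$, equation \eqref{eq:heron-p} reads $0=H(b,0,e)=-(b-e)^2$, giving $b=e$ unconditionally; likewise, for the diamond at $(i,i+n-1)$ the entry $r=z_{(i+\frac12,i+n)}$ also vanishes, and \eqref{eq:heron-r} reads $0=H(0,f,b)=-(f-b)^2$, giving $f=b$. This is precisely the content of Lemmas~\ref{lem:diamond-0-top} and~\ref{lem:diamond-0-bottom} ($a=q=r=0$ and $c=p=s=0$ respectively), which the paper applies to two adjacent boundary diamonds to obtain both equalities of each line; once you use all three vanishing entries, every step is an exact identity and no degenerate case remains.
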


\begin{proof}
The diamond condition for the interior node $(i,i+1)$ says that the 10 numbers
\[
z_{(i,i+2)}, z_{(i+\frac12,\smallneline)}, z_{(i+1,i+1)}, z_{(\smallseline,i+\frac32)}, z_{(i,i+1)}, z_{(i+1,i+2)}, z_{(i+\frac12,i+1)}, z_{(i,i+\frac32)} ,z_{(i+\frac12,i+2)}, z_{(i+1,i+\frac32)}
\]
form a Heronian diamond. By \eqref{eq:border-equalities}, three of these numbers vanish: $z_{(i+1,i+1)}\!=\!z_{(i+\frac12,i+1)}\!=\!z_{(i+1,i+\frac32)}\!=\!0$.  Hence Lemma~\ref{lem:diamond-0-bottom} applies, and  $z_{(i,i+1)} \!=\! z_{(i+\frac12,\smallneline)}$ by~\eqref{eq:b=e}.

Similarly, the diamond condition for the node $(i-1,i)$ says that the 10 numbers
\[
(z_{(i-1,i+1)}, z_{(i-\frac12,\smallneline)}, z_{(i,i)}, z_{(\smallseline,i+\frac12)}, z_{(i-1,i)}, z_{(i,i+1)}, z_{(i-\frac12,i)}, z_{(i-1,i+\frac12)} ,z_{(i-\frac12,i+1)}, z_{(i,i+\frac12)})
\]
form a Heronian diamond. The three numbers $z_{(i,i)}$, $z_{(i-\frac12,i)}$, and $z_{(i,i+\frac12)}$ are all zero, so Lemma~\ref{lem:diamond-0-bottom} applies.  By~\eqref{eq:f=d}, we get $z_{(i,i+1)} = z_{(i+\frac12,\smallseline)}$, establishing \eqref{eq:partial-diamond-1}.

Equation~\eqref{eq:partial-diamond-2} is proven in a similar way, by applying Lemma~\ref{lem:diamond-0-top} to the Heronian diamonds associated with the interior nodes $(i,i+n-1)$ and~$(i-1,i+n-2)$.
\end{proof}

\begin{defn}
\label{def:z(P)}
In light of Proposition~\ref{prop:identities-ijkl}
(also compare Figures~\ref{fig:heronian-diamond-xS} and~\ref{fig:heronian-frieze-blowup}), any $n$-gon $P$ in the plane~$\AAA$ gives rise to a Heronian frieze $\mathbf{z}=\mathbf{z}(P)$ of order $n$ by setting
\begin{align}
\label{eq:pattern-restriction-start}
z_{(i,j)} &= x_{\langle i\rangle\langle j\rangle},\\
z_{(i+\frac12,j)}&= S_{\langle i\rangle\langle i+1\rangle\langle j\rangle},\\
z_{(i,j+\frac12)}&= S_{\langle i\rangle\langle j\rangle\langle j+1\rangle},\\
z_{(i+\frac12,\smallneline)}&= x_{\langle i \rangle\langle i+1\rangle},\\
\label{eq:pattern-restriction-end}
z_{(\smallseline,j+\frac12)}&= x_{\langle j \rangle\langle j+1\rangle},
\end{align}
where $\langle m \rangle$ denotes the unique integer in $\{1,\dots,n\}$ satisfying $m \equiv \langle m \rangle \pmod n$. 
(Condition \eqref{eq:border-equalities} holds because $x_{ii}=S_{i,i,i+1}=S_{i,i+1,i+1}=0$  for every $i \in \ZZ$.) 
\end{defn}

Any frieze $\mathbf{z}(P)$ coming from a polygon~$P$ is necessarily periodic:
\begin{alignat}{3}
\label{eq:periodicity-from-polygon-1}
z_{(i,j)}&=z_{(i+n,j+n)} \quad &&(i,j\in N_n),\\
\label{eq:periodicity-from-polygon-2}
z_{(i+\frac12,\smallneline)}&=z_{(i+\frac12+n,\smallneline)} \quad &&(i\in\ZZ),\\
\label{eq:periodicity-from-polygon-3}
z_{(\smallseline,j+\frac12)}&=z_{(\smallseline,j+\frac12+n)}\quad &&(j\in\ZZ). 
\end{alignat}
In fact, \eqref{eq:periodicity-from-polygon-1} can be strengthened as follows: $\mathbf{z}(P)$ possesses the \emph{glide symmetry}
\begin{align}
\label{eq:glide-from-polygon}
z_{(i,j)} &= z_{(j,i+n)}\quad (i,j\in N_n),
\end{align} 
which also reflects the symmetries $x_{ij}\!=\!x_{ji}$ and $S_{ijk}\!=\!S_{jki}$ of the measurements. 
(The~same symmetries appear in the Coxeter-Conway theory of frieze patterns~\cite{conway-coxeter, coxeter-friezes}.) 
We will soon provide a partial converse to this phenomenon, cf.\ Theorem~\ref{thm:frieze-to-pattern}. 

Although the definition of Heronian friezes was motivated by geometry, they are purely algebraic objects, merely tables of numbers satisfying some algebraic relations. These relations can be viewed as recurrences: start by picking some initial data, then propagate away by repeatedly applying Corollary~\ref{cor:heronian-propagate} (or Corollary~\ref{cor:heronian-propagate-near-boundary}) for the Heronian diamonds in the pattern. To describe this procedure in precise terms, we will need to specify the sets of indices corresponding to our choices of initial data.

%
%

\begin{defn}
\label{defn:traversing-Heronian}
A \emph{traversing path} $\pi$ for an order~$n$ Heronian frieze is an ordered collection 
\[
\pi=((i_1, j_1), \dots, (i_{2n-3},j_{2n-3}),\ell_1, \dots, \ell_{n-2})
\]
of $3n-5$ indices in $I_n$ such that
\begin{itemize}
\item
$(i_1,j_1), \dots, (i_{2n-3},j_{2n-3})$ 
are interior nodes in~$N_n$; 
\item
$\ell_1, \dots, \ell_{n-2}$ are lines in $L_n$; 
\item
$j_1 - i_1 = 1$;
\item
$j_{2n-3} - i_{2n-3} = n-1$;
\item
$|i_{k+1} - i_k| + |j_{k+1} - j_k| = \tfrac12$, for $k \in \{1, \dots, 2n-4\}$;
\item
if $i_{2k} \in \ZZ+\frac12$, then $\ell_k = (i_{2k},\neline)\in L_n$; 
\item
if $j_{2k} \in \ZZ+\frac12$, then $\ell_k = (\seline, j_{2k})\in L_n$.
\end{itemize}
The following less formal description is perhaps more illuminating. Let us view $N_n$ as the vertex set of a graph, as shown in Figure~\ref{fig:heronian-frieze-1}, but without the dashed lines.~Then: 
\begin{itemize}
\item
$(i_1,j_1), \dots, (i_{2n-3},j_{2n-3})$ are the nodes lying on a shortest path connecting the lower and upper boundaries of the strip of interior nodes; 
\item
$\ell_1, \dots, \ell_{n-2}$ are the dashed lines intersecting this shortest path. 
\end{itemize}
\end{defn}

\begin{example}
\label{example:heronian-traversing-path}
For $n=5$ (cf.\ Figure~\ref{fig:frieze-indices}), a traversing path consists of $3n-5 = 10$ indices. One example of such a path is
\[
\bigl((0,1), (0,\tfrac32), (0,2), (-\tfrac12,2), (-1,2), (-1, \tfrac52), (-1,3), (\seline,\tfrac32), (-\tfrac12,\neline), (\seline,\tfrac52)\bigr).
\]
\end{example}

\begin{rem}
\label{rem:path=triangulation}
For a Heronian frieze $\mathbf{z}(P)$ obtained from a plane $n$-gon~$P$ as in Definition~\ref{def:z(P)}, a traversing path~$\pi$ corresponds to a particular kind of a triangulation, namely one in which every triangle has at least one of its sides lying on the perimeter of~$P$. (Cf.\ Definition~\ref{defn:thin-triangulation} below.) 
Moreover by Corollary~\ref{cor:triangulation-recovery}, a sufficiently generic polygon~$P$ (hence the entire frieze $\mathbf{z}(P)$) can be recovered from the values of the frieze lying along~$\pi$. 
\end{rem}


\begin{cor}
\label{cor:frieze-is-recurrence}
Let $\mathbf{z} = (z_\alpha)_{\alpha \in I_n}$ be a Heronian frieze of order~$n$. Suppose we know that 
\begin{equation}
\label{eq:nonzero-interior}
\text{$z_{(i,j)}\neq 0\,$ for any $\,(i,j)\in \ZZ^2$ such that  $\,2\le j-i\le n-2$. }
\end{equation}
Then the entire frieze can be uniquely reconstructed from its entries lying on a single traversing path~$\pi$.
\end{cor}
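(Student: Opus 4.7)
The plan is to reconstruct the frieze by propagating values from $\pi$ outward using the diamond recurrences from Corollary~\ref{cor:heronian-propagate} (or Corollary~\ref{cor:heronian-propagate-near-boundary} at the boundary). The traversing path $\pi$ consists of $2n-3$ interior-node entries forming a monotone staircase from the lower boundary $j-i=1$ to the upper boundary $j-i=n-1$, together with $n-2$ line indices for the half-integer rows the staircase crosses. Combined with the boundary zeros~\eqref{eq:border-equalities}, these data provide at least one known value on every row $v = j-i \in \{\tfrac12, 1, \dots, n-\tfrac12\}$.

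The main propagation step is the following. Consider a diamond centered at an interior integer node $(i_0, j_0)$, with entries labeled $a, b, c, d, e, f, p, q, r, s$ as in the dictionary between Figures~\ref{fig:heronian-diamond} and~\ref{fig:heronian-frieze-blowup}. Suppose the four corner entries $a, b, c, d$ and the three left-half entries $e, p, q$ are known. If $2 \le j_0 - i_0 \le n-2$, the nonzero condition~\eqref{eq:nonzero-interior} guarantees $e \neq 0$, so Corollary~\ref{cor:heronian-propagate} uniquely determines the remaining three entries $f, r, s$. Symmetrically, knowing the four corners and the right-half entries recovers the left-half entries. For diamonds with $j_0 - i_0 = 1$ or $j_0 - i_0 = n-1$, where three entries vanish by~\eqref{eq:border-equalities}, Corollary~\ref{cor:heronian-propagate-near-boundary} plays the analogous role (and requires no additional nonvanishing hypothesis).

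I would then iterate. Starting from the known values on $\pi$ together with the boundary zeros, I would identify a diamond whose four corners and one side are known but whose other side is not, and apply the main step to fill in that side. Propagating rightward (resp.\ leftward) in this fashion, the known region marches across the strip one half-step at a time, and every node of~$N_n$ is eventually determined. Since each step is deterministic given the known data, this shows that any two Heronian friezes agreeing on $\pi$ must agree everywhere, establishing uniqueness.

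The main obstacle I anticipate is the bookkeeping: verifying that at every stage there is indeed a diamond ready for propagation (in particular, that the appropriate seven entries are available simultaneously), and that every node is ultimately reached. I would organize the argument as an induction on the horizontal distance from $\pi$ in the $i+j$ coordinate, at each stage filling in all nodes at the next distance by applying the main step to the outermost diamond on the known region, and separately checking that the boundary rows are handled by Corollary~\ref{cor:heronian-propagate-near-boundary} before the recursion reaches them.
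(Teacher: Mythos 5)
Your proposal is correct and follows essentially the same route as the paper, whose proof is simply to apply the recurrences of Corollary~\ref{cor:heronian-propagate} and Corollary~\ref{cor:heronian-propagate-near-boundary} to the diamonds adjacent to~$\pi$ and expand outward. Your additional bookkeeping (checking that $e\neq 0$ via~\eqref{eq:nonzero-interior} for interior diamonds, handling the rows $j-i\in\{1,n-1\}$ via the degenerate diamonds, and organizing the sweep by distance from~$\pi$) is exactly the content the paper leaves implicit.
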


\begin{proof}
Repeatedly apply the recurrences underlying Corollary~\ref{cor:heronian-propagate} and Corollary~\ref{cor:heronian-propagate-near-boundary} to all  Heronian diamonds in the frieze, starting with the ones adjacent to $\pi$ and expanding out.
\end{proof}



To be more specific, the recurrences for rightward propagation in a Heronian freeze are \eqref{eq:f=}--\eqref{eq:s=} (inside the frieze), \eqref{eq:d=e}--\eqref{eq:s=p} (near the top boundary) and \eqref{eq:b=e}--\eqref{eq:r=q} (near the bottom). 
For an equilateral frieze with parameter~$b$, we set $d=b$, and do not need to update the line variables $b$ and $d$. 

\begin{rem}
Corollary~\ref{cor:frieze-to-polygon} leaves open the question of \emph{existence} of a Heronian frieze with the given values along a particular traversing path (subject to an appropriate nonvanishing condition). We answer this question in Section~\ref{sec:laurentness}. 
\end{rem}

\begin{cor}
\label{cor:frieze-to-polygon}
Let $\mathbf{z}$ be a Heronian frieze of order~$n$ satisfying the nonvanishing condition~\eqref{eq:nonzero-interior}. 
Then there exists a (unique) $n$-gon such that 
$\mathbf{z}=\mathbf{z}(P)$. 
\end{cor}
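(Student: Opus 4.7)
The plan is to reduce this to a combination of Corollary~\ref{cor:triangulation-recovery} (which builds a polygon from data on a triangulation) and Corollary~\ref{cor:frieze-is-recurrence} (which shows a frieze is determined by its values on a traversing path). The bridge between them is the observation in Remark~\ref{rem:path=triangulation}: a traversing path~$\pi$ is exactly the data of a triangulation~$G$ (of the ``fan-like'' type, in which every triangle has an edge on the polygon boundary), together with a choice of $\xS_G$.

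First I would fix any traversing path $\pi=((i_1,j_1),\dots,(i_{2n-3},j_{2n-3}),\ell_1,\dots,\ell_{n-2})$. Reading off the frieze along $\pi$, the integer-coordinate nodes contribute squared-distance values $x_{ij}$ for the $n-3$ diagonals and some of the sides of~$G$; the line entries $z_{\ell_k}$ contribute the remaining side squared distances; and the half-integer-coordinate nodes contribute the $n-2$ signed areas $S_{ijk}$, one for each triangle of~$G$. For each triangle $(i,j,k)$ of~$G$, the defining diamond conditions of a Heronian frieze include the Heron equation \eqref{eq:heron-p} (or \eqref{eq:heron-q}), so $S_{ijk}^2 = H(x_{ij},x_{jk},x_{ik})$ holds. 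Moreover, by the nonvanishing hypothesis \eqref{eq:nonzero-interior}, the squared distance attached to every diagonal of~$G$ is nonzero. Thus the hypotheses of Corollary~\ref{cor:triangulation-recovery} are met, and that corollary produces an $n$-gon $P \in \AAA^n$, unique up to $\textup{Aut}(\AAA)$, with $\xS_G(P)$ equal to the data read off from~$\pi$.

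Next I would verify that $\mathbf{z}(P) = \mathbf{z}$. By Proposition~\ref{prop:identities-ijkl} and Definition~\ref{def:z(P)}, the array $\mathbf{z}(P)$ is a Heronian frieze, and by the construction of~$P$ it agrees with $\mathbf{z}$ on every index of~$\pi$. Now I would compare the two friezes by propagating outward from~$\pi$ using the recurrences of Corollary~\ref{cor:heronian-propagate} (in the interior) and Corollary~\ref{cor:heronian-propagate-near-boundary} (near the top and bottom borders). A straightforward induction on distance from~$\pi$ shows that $\mathbf{z}(P)$ and $\mathbf{z}$ coincide at every index: at each step, the recurrence determines three new entries from seven known entries, provided the relevant ``$e$'' or ``$f$'' entry is nonzero; and these denominators are exactly the interior squared-distance entries of~$\mathbf{z}$, which are nonzero by~\eqref{eq:nonzero-interior}. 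Inductively the two friezes agree on all previously computed entries, so the denominators used when propagating $\mathbf{z}(P)$ equal the nonzero denominators used for $\mathbf{z}$, and the recurrence forces the newly computed entries to agree as well.

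This gives existence. Uniqueness (up to $\textup{Aut}(\AAA)$) is essentially free: if $P'$ is another $n$-gon with $\mathbf{z}(P')=\mathbf{z}$, then in particular $\xS_G(P')=\xS_G(P)$, so the uniqueness clause of Corollary~\ref{cor:triangulation-recovery} places $P'$ in the same $\textup{Aut}(\AAA)$-orbit as~$P$. I do not anticipate a real obstacle here, since all of the substantive work---the existence of a geometric realization of triangulation data, and the unambiguous propagation of the Heronian recurrences---has already been done. The only point requiring genuine care is the bookkeeping that the denominators in the propagation step are indeed the interior $\ZZ^2$-indexed entries covered by the nonvanishing hypothesis~\eqref{eq:nonzero-interior}, rather than entries on the boundary rows, where one must instead invoke Corollary~\ref{cor:heronian-propagate-near-boundary}.
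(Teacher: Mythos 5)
Your argument is correct and follows the same route as the paper: the paper's proof likewise picks a traversing path, constructs a polygon $P$ whose frieze agrees with $\mathbf{z}$ along the path (via Remark~\ref{rem:path=triangulation}, i.e.\ Corollary~\ref{cor:triangulation-recovery}), and then invokes Corollary~\ref{cor:frieze-is-recurrence} to conclude $\mathbf{z}=\mathbf{z}(P)$. Your write-up simply spells out the propagation and uniqueness details that the paper leaves implicit.
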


\begin{proof}
Pick a traversing path $\pi$ and construct an $n$-gon~$P$ whose frieze $\mathbf{z}(P)$ agrees with $\mathbf{z}$ along~$\pi$, as in Remark~\ref{rem:path=triangulation}. Then apply Corollary~\ref{cor:frieze-is-recurrence}.
\end{proof}

Corollary~\ref{cor:frieze-to-polygon} implies the following purely algebraic statement. 

\begin{thm}
\label{thm:frieze-to-pattern}
Any Heronian frieze satisfying the nonvanishing condition~\eqref{eq:nonzero-interior} possesses the glide symmetry~\eqref{eq:glide-from-polygon}.  
\end{thm}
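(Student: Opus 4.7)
The plan is to reduce this algebraic statement to its geometric counterpart and thereby sidestep any direct manipulation of the Heronian diamond relations. Under the nonvanishing hypothesis~\eqref{eq:nonzero-interior}, Corollary~\ref{cor:frieze-to-polygon} supplies an $n$-gon $P$ in $\AAA$ such that $\mathbf{z} = \mathbf{z}(P)$. It therefore suffices to verify~\eqref{eq:glide-from-polygon} when the frieze arises from a polygon, at which point the claim becomes a statement purely about the measurements $x_{ab}$ and $S_{abc}$ of~$P$.

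For friezes of the form $\mathbf{z}(P)$, I would expand $z_{(i,j)}$ and $z_{(j,i+n)}$ using the defining formulas \eqref{eq:pattern-restriction-start}--\eqref{eq:pattern-restriction-end} together with the identity $\langle m+n\rangle = \langle m\rangle$. The verification splits into three cases according to the type of index $(i,j)\in N_n$: both coordinates integers (where the identity reduces to $x_{ab} = x_{ba}$); $(i+\tfrac12,j)$ with $j\in\ZZ$ (where it reduces to the cyclic symmetry $S_{abc} = S_{bca}$); and $(i,j+\tfrac12)$ with $i\in\ZZ$, which is symmetric to the previous case. Each check is immediate, and \eqref{eq:glide-from-polygon} is not asserted for indices in $L_n$, so no additional work is needed there.

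The upshot is that the entire substantive content of the theorem is already packaged inside Corollary~\ref{cor:frieze-to-polygon}, whose proof in turn rests on the one-diamond realizability statement (Lemma~\ref{lem:recover-triangle-1}) and on the uniqueness of propagation along a traversing path (Corollary~\ref{cor:frieze-is-recurrence}). I therefore do not expect a genuine obstacle at this stage; the only point deserving mild care is ensuring that the case analysis covers all types of nodes in $N_n$ and that the shift $i \mapsto i+n$ selects the correct residues modulo~$n$ inside each formula of Definition~\ref{def:z(P)}.
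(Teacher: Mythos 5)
Your proposal is correct and follows the paper's own route exactly: the paper derives Theorem~\ref{thm:frieze-to-pattern} as an immediate consequence of Corollary~\ref{cor:frieze-to-polygon}, since any frieze of the form $\mathbf{z}(P)$ has the glide symmetry~\eqref{eq:glide-from-polygon} by virtue of the symmetries $x_{ij}=x_{ji}$ and $S_{ijk}=S_{jki}$. Your case analysis over the node types in $N_n$ just makes explicit the routine verification the paper leaves implicit.
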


\begin{example}
Figure~\ref{fig:heronian-frieze-example} shows the fundamental domain for an equilateral frieze with respect to the glide symmetry. 
\end{example}

\pagebreak[3]
\section{Laurent phenomenon for Heronian friezes}
\label{sec:laurentness}

The main result of this section is the following theorem. 

\begin{thm}
\label{thm:laurentness-full}
Let $G$ be a triangulated $n$-cycle. Then every measurement in $\xS(P)$ (viewed as a function on the configuration space of all $n$-gons~$P$) can be expressed as a Laurent polynomial in the measurements in~$\xS_G(P)$. The denominator of this Laurent polynomial is a monomial in the squared lengths of diagonals~of~$G$.
\end{thm}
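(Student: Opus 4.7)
The plan is to prove Laurentness by induction on the flip distance between $G$ and a triangulation containing the target measurement, with the bulk of the work being careful control of denominators in the spirit of the Laurent phenomenon for type~$A$ cluster algebras (whose exchange combinatorics is identical to the combinatorics of flips of triangulated $n$-cycles appearing here).

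The atomic step is the single quadrilateral flip. Suppose $G'$ is obtained from $G$ by flipping a diagonal $\{i,k\}$ inside the quadrilateral spanned by vertices $i,j,k,\ell$ to the diagonal $\{j,\ell\}$. Then the three new measurements $x_{j\ell}$, $S_{ij\ell}$, $S_{jk\ell}$ of $\xS_{G'}(P)$ are given explicitly by formulas \eqref{eq:f=}--\eqref{eq:s=} of Proposition~\ref{prop:heron-propagates}; reading those formulas off, each is a Laurent polynomial in $\xS_G(P)$ whose denominator is a single power of $x_{ik}$. Crucially, no $S$-variable ever appears in any denominator at this atomic step --- this is the structural feature that eventually forces the denominator in the full theorem to contain only squared distances. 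All other measurements in $\xS_{G'}(P)$ agree with elements of $\xS_G(P)$.

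The second step is reachability: since the flip graph of triangulations of the convex $n$-gon is connected (a classical fact), for each diagonal $\{i,j\}$ there is a triangulation $G^*$ containing it, together with a flip path $G=G_0 \to G_1 \to \cdots \to G_m = G^*$. (An analogous statement holds if the target measurement is a signed area $S_{ijk}$.) Iterating the atomic step along this path expresses every target measurement as a rational function in $\xS_G(P)$.

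The main obstacle is the third step: showing that the denominator can be taken to be a monomial in squared lengths of diagonals of the \emph{original} $G$, rather than involving diagonals of the intermediate triangulations $G_1,\dots,G_{m-1}$. Naive iteration produces denominators involving intermediate diagonals, and since each intermediate diagonal is (inductively) a Laurent polynomial rather than a monomial in $\xS_G(P)$, its reciprocal is \emph{not} Laurent in $\xS_G(P)$ individually --- so the Laurentness of the full expression depends on a genuine cancellation. My plan for handling this is first to establish the statement for a \emph{fan} (thin) triangulation, where all diagonals emanate from a fixed vertex, by a direct induction that expands one triangle at a time outward from the apex; in this case each Laurent expression and its denominator can be written down explicitly, and the claim amounts to a hands-on verification using the quadrilateral flip formulas. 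Then, for an arbitrary $G$, I would choose a flip path connecting $G$ to a fan triangulation and apply a Caterpillar-Lemma-style argument (in the manner of Fomin--Zelevinsky) to show that Laurentness with the correct denominator propagates across single flips; the crucial cancellation is precisely the one encoded in the degree-$1$ dependence of formulas \eqref{eq:f=}--\eqref{eq:s=} on the ``old'' diagonal variable~$e$. Once this propagation is in place, the ``no $S$ in the denominator'' claim is automatic, since the atomic flip formulas never introduce an $S$ into a denominator and the propagation step preserves this property.
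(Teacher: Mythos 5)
You have the right atomic step (the flip formulas \eqref{eq:f=}--\eqref{eq:s=}, with denominator a power of the flipped diagonal and no signed areas downstairs), and you have correctly diagnosed the real difficulty: along an arbitrary flip path the intermediate diagonals enter denominators, and since those are themselves non-monomial Laurent polynomials in $\xS_G(P)$, a genuine cancellation is required. But your proposed resolution of that difficulty --- a Caterpillar-Lemma-style propagation along a flip path from $G$ to a fan --- is exactly the step you have not carried out, and it is the hard one. The Fomin--Zelevinsky caterpillar machinery rests on working in a Laurent polynomial ring over algebraically independent variables and on coprimality of consecutive exchange polynomials; here the initial data $\xS_G(P)$ are algebraically \emph{dependent} (they satisfy the Heron equations, so there is not even a canonical rational expression to test for divisibility), the ``exchange'' is a simultaneous multi-variable substitution rather than a binomial exchange relation, and the paper explicitly notes that this construction does not fit any known (generalized) cluster algebra setup. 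So the crucial cancellation is asserted, not proved, and there is no evident route to proving it in the form you set up.

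The paper sidesteps the cancellation entirely by running the induction on the number of vertices $n$ rather than on flip distance, using the \emph{trimming} operation of Definition~\ref{defn:trimmed}. One first reduces to the case where $G$ is trimmed with respect to the target (Remark~\ref{rem:can-trim}); then an ``ear'' diagonal $e=\{i-1,i+1\}$, which belongs to the \emph{original} $D(G)$, is flipped, the resulting triangulation fails to be trimmed, and trimming it deletes vertices, producing a triangulation $G''$ of a strictly smaller polygon with $D(G'')=D(G)\setminus\{e\}$ (Lemma~\ref{lem:laurent-trim}). Thus every denominator that ever appears is a monomial in the original diagonals, and no cancellation is needed at any stage. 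Your fan-case computation secretly works for the same reason (the diagonal flipped out is always an original fan diagonal), but this property fails along a generic flip path from $G$ to a fan, which is precisely where your plan breaks. Note also that the signed-area case requires additional combinatorial care (the case analysis of Proposition~\ref{prop:laurent-sijk}, including the exceptional hexagon with diagonal triangle $\{1,3\},\{3,5\},\{1,5\}$ handled via the additivity relation \eqref{eq:split-ijkl}), which your sketch does not address.
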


In algebraic terms, Theorem~\ref{thm:laurentness-full} asserts that each entry in a generic Heronian frieze can be written as a Laurent polynomial in the initial data associated with a choice of a traversing path, see Corollary~\ref{cor:Laurentness-Heronian-frieze} below. 

Later in this section, we prove a slightly stronger---but more technical---version of Theorem~\ref{thm:laurentness-full}, see Theorem~\ref{thm:laurent-thin}. 

\medskip

The proof of Theorem~\ref{thm:laurentness-full} requires some preparation. 

\begin{defn}
Consider the $n$-cycle with vertices $1,2,\dots,n$ (in~this order), $n\ge 4$. 
Let $i,j,k,\ell$ be four distinct vertices on this cycle, with $i < j$ and $k < \ell$. We say that the diagonals $\{i,j\}$ and $\{k,\ell\}$ \emph{cross} if either $i < k < j < \ell$ or $k < i < \ell < j$. (In~particular, no two diagonals incident to the same vertex cross each other.) 
\end{defn}


\begin{defn}
\label{defn:trimmed}
Let $G$ be a triangulated $n$-cycle, see Definition~\ref{defn:triangulation-graph}. We denote by $E(G)$ the set of edges of~$G$, and by $D(G)\subset E(G)$ the set of diagonals of~$G$. 
For a diagonal $\{i,j\}\notin D(G)$, the \emph{trimming} of $G$ with respect to $\{i,j\}$, denoted $\tau(G,i,j)$, is the induced subgraph of~$G$ whose vertex set includes $i$, $j$, and the endpoints of all diagonals in $D(G)$ which cross~$\{i,j\}$. Note that $\tau(G,i,j)$ is itself a triangulated~cycle. If $G=\tau(G,i,j)$, then we say that $G$ is \emph{trimmed} with respect to~$\{i,j\}$. See Figure~\ref{fig:trimming-octagon}. 

Similarly, the trimming of $G$ with respect to a triple $(i,j,k)$, denoted $\tau(G,i,j,k)$, is 
the induced subgraph of~$G$ whose vertex set includes $i$, $j$, $k$, and the endpoints of all diagonals in $D(G)$ which cross at least one of the diagonals $\{i,j\}$, $\{i,k\}$, $\{j,k\}$. Again, $\tau(G,i,j,k)$ is a triangulated cycle. If $G = \tau(G,i,j,k)$, then we say that $G$ is \emph{trimmed} with respect to $(i,j,k)$.
\end{defn}

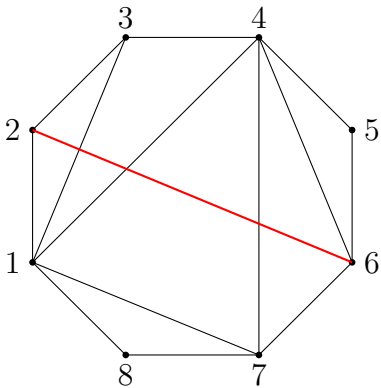
\begin{figure}[ht]
\begin{center}
\setlength{\unitlength}{2.5pt}
\begin{picture}(50,50)(0,-2)
\multiput(14,0)(0,48){2}{\line(1,0){20}}
\multiput(0,14)(48,0){2}{\line(0,1){20}}
\multiput(0,14)(34,34){2}{\line(1,-1){14}}
\multiput(0,34)(34,-34){2}{\line(1,1){14}}
\put(0,14){\line(14,34){14}}
\put(0,14){\line(1,1){34}}
\put(0,14){\line(34,-14){34}}
\put(34,0){\line(0,1){48}}
\put(48,14){\line(-14,34){14}}

\multiput(14,0)(0,48){2}{\circle*{1}}
\multiput(34,0)(0,48){2}{\circle*{1}}
\multiput(0,14)(48,0){2}{\circle*{1}}
\multiput(0,34)(48,0){2}{\circle*{1}}

\put(-3,14){\makebox(0,0){$1$}}
\put(-3,34){\makebox(0,0){$2$}}
\put(51,14){\makebox(0,0){$6$}}
\put(51,34){\makebox(0,0){$5$}}
\put(14,-3){\makebox(0,0){$8$}}
\put(34,-3){\makebox(0,0){$7$}}
\put(14,51){\makebox(0,0){$3$}}
\put(34,51){\makebox(0,0){$4$}}

\thicklines
\put(0,34){\color{red}{\line(48,-20){48}}}
\end{picture}
\end{center}
\caption{The trimming of this triangulated $8$-cycle $G$ with respect to the diagonal $\{2,6\}$ produces a triangulated hexagon $\tau(G,2,6)$ with vertices $1,2,3,4,6,7$.}
\label{fig:trimming-octagon}
\end{figure}

\begin{rem}
\label{rem:can-trim}
When we are interested in recovering a measurement $x_{ij}$ (resp.,~$S_{ijk}$)  of a plane polygon~$P$ from the subset of measurements $\xS_G(P)$ corresponding to a triangulation~$G$, we may always assume, without loss of generality, that $G$ is trimmed with respect to the diagonal~$\{i,j\}$ (resp., the triangle $(i,j,k)$). (Otherwise, we can trim~$G$, and then proceed. The formulas will be exactly the same.) 
\end{rem}

\begin{rem}
\label{rem:trimmed-is-thin}
In a trimmed triangulation $\tau(G,i,j)$, every triangle uses at least one side of the $n$-cycle. Equivalently, no three diagonals form a triangle. 
\end{rem}

\begin{definition}
\label{def:flipping}
Let $e$ be a diagonal in a triangulation~$G$. We denote by $G'=\mu_e(G)$ the unique triangulation (of the same cycle) obtained by replacing $e$ by a different diagonal~$f$. We say that $G'$ is obtained from~$G$ by \emph{flipping}~$e$ to~$f$. 
\end{definition}

Let~$(P,G)$ be  a triangulated polygon, see Definition~\ref{defn:triangulated-polygon}. We denote by $\xx_{D(G)}(P)\subset\xS_G(P)$ the collection of squared lengths labeled by the diagonals in~$D(G)$.

\begin{lem}
\label{lem:laurent-trim}
Let $G$ be a triangulated cycle, $e\!\in\! D(G)$ a diagonal in it, and \hbox{$G'\!=\!\mu_e(G)$.} Suppose that $\{i,j\}\notin D(G)$ is a diagonal such that $G$ is trimmed with respect to~$\{i,j\}$, but $G'$ is~not. Let $G''=\tau(G',i,j)$ be the trimming of $G'$ with respect to~$\{i,j\}$. Assume that the measurement $x_{ij} \in \xS(P)$ can be written as a Laurent polynomial in $\xS_{G''}(P)$ whose denominator is a monomial in $\xx_{D(G'')}(P)$. Then $x_{ij}$ can be expressed as a Laurent polynomial in $\xS_G(P)$ whose denominator is a monomial in~$\xx_{D(G)}(P)$. The same holds true with $\{i,j\}$ and $x_{ij}$ replaced by $(i,j,k)$ and $S_{ijk}$, respectively.
\end{lem}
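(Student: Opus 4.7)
The plan is to substitute, into the assumed Laurent expression of $x_{ij}$ over $\xS_{G''}(P)$, the explicit formulas of Proposition~\ref{prop:heron-propagates} that rewrite each of the new measurements in $\xS_{G'}(P)$ (namely $x_{e'}$ and the two new signed areas) as a Laurent polynomial in $\xS_G(P)$ with denominator a power of $x_e$. Since $G''$ is an induced subgraph of $G'$, we have $\xS_{G''}(P)\subseteq\xS_{G'}(P)$, so this substitution does produce a Laurent expression for $x_{ij}$ over $\xS_G(P)$. The hard part is controlling the denominator: if the monomial in $\xx_{D(G'')}(P)$ contained $x_{e'}$ as a factor, substituting would introduce the polynomial $(p+q)^2+(a-b+c-d)^2$ into the denominator, violating the ``monomial in $\xx_{D(G)}(P)$'' requirement.

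The bulk of the proof is therefore the combinatorial claim that $e'\notin D(G'')$. To prove it, I would write the quadrilateral around $e$ as $v_1v_2v_3v_4$ in cyclic $n$-cycle order, so that $e=\{v_1,v_3\}$ and $e'=\{v_2,v_4\}$. Since $G$ is trimmed while $G'$ is not, at least one of $v_1,v_3$ must be dropped upon passing to $V(G'')$; by symmetry, say $v_1$ is. This forces $v_1\notin\{i,j\}$ and $e$ to be the unique $G$-diagonal through $v_1$ crossing $\{i,j\}$. I~then show that both $\{v_1,v_2\}$ and $\{v_4,v_1\}$ must be sides of the $n$-cycle. Indeed, if $\{v_1,v_2\}$ were a diagonal of $G$, it could not cross $\{i,j\}$ (else $v_1$ would survive), so $v_1$ and $v_2$ would lie in the same arc $A$ of $\{i,j\}$. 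Picking any $n$-cycle vertex $w$ strictly between $v_1$ and $v_2$ on the short arc (which lies entirely in $A$), trimmedness of $G$ provides a $G$-diagonal $\{w,w'\}$ crossing $\{i,j\}$, which forces $w'$ into the opposite arc~$B$; but then $\{w,w'\}$ crosses $\{v_1,v_2\}$, violating the non-crossing property of the triangulation~$G$. Symmetric arguments handle $\{v_4,v_1\}$ and the case where $v_3$ is also dropped. Removing the dropped vertex(es) from $V(G')$ then leaves $v_2$ and $v_4$ cyclically adjacent in $V(G'')$, so $e'$ is a \emph{side} of the $G''$-cycle, not a diagonal.

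Given the claim, the rest is bookkeeping. We have $D(G'')\subseteq D(G')$ and $e'\notin D(G'')$, so $D(G'')\subseteq D(G)\setminus\{e\}$. Writing the assumed expression as $x_{ij}=N/M$ with $M$ a monomial in $\xx_{D(G'')}(P)$, and substituting the formulas of Proposition~\ref{prop:heron-propagates}, the denominator $M$ stays a monomial in $\xx_{D(G)\setminus\{e\}}(P)$, while the numerator becomes Laurent in $\xS_G(P)$ with only a power of $x_e$ in its denominator. Combining, $x_{ij}$ becomes Laurent in $\xS_G(P)$ with denominator a monomial in $\xx_{D(G)}(P)$, as desired. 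The statement for $S_{ijk}$ follows verbatim by using the trimming with respect to the triple $(i,j,k)$: a vertex is dropped iff $e$ is the unique $G$-diagonal through it crossing one of the three chords $\{i,j\},\{i,k\},\{j,k\}$, and the non-crossing argument carries over because a chord with both endpoints in any one of the three arcs $A_{ij},A_{ik},A_{jk}$ determined by $i,j,k$ cannot cross any of these three chords.
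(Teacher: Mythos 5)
Your proof is correct and follows essentially the same route as the paper's: substitute the flip formulas \eqref{eq:f=}--\eqref{eq:s=} for the three new measurements of $G'=\mu_e(G)$ into the assumed Laurent expression over $\xS_{G''}(P)$, and observe that this introduces only powers of $x_e$ into the denominator. The one difference is that the paper simply asserts the key combinatorial fact $D(G'')=D(G)\setminus\{e\}$ (equivalently, that the flipped diagonal is not a diagonal of~$G''$), whereas you supply a correct justification of it by showing that a dropped endpoint of $e$ must be flanked by two sides of the $n$-cycle, so the flipped diagonal becomes a side of the trimmed cycle.
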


\begin{proof}
Write $x_{ij} = Q_{G''}/M_{G''}$ where $Q_{G''}$ is a polynomial in $\xS_{G''}(P)$ and $M_{G''}$ a monomial in $\xx_{D(G'')}(P)$. Note that $D(G'') = D(G) \setminus \{e\}$. Also observe that $\xS_{G''}(P)$ consists of some subset of~$\xS_{G}(P)$ together with $x_f$ and two signed areas of the form $S_{fgh}$, for two triangles which have $f$ as a side. By \eqref{eq:f=}--\eqref{eq:s=}, each of these three measurements can be written as a Laurent polynomial in $\xS_G(P)$ with denominator $x_e \in \xS_G(P)$. Hence $Q_{G''}$ can be written as a Laurent polynomial in $\xS_G(P)$ with denominator a power of~$x_e$, and so $x_{ij}$ can be written as a Laurent polynomial in $\xS(G)$ with denominator a monomial in $\{x_e\}\cup \xx_{D(G'')}(P) = \xx_{D(G)}(P)$. A similar argument establishes the companion  result for $(i,j,k)$ and~$S_{ijk}$.
\end{proof}

\begin{prop}
\label{prop:laurent-xij}
Let $G$ be a triangulated $n$-gon which is trimmed with respect to a diagonal $\{i,j\}$. Then $x_{ij}$ can be written as a Laurent polynomial in the measurements $\xS_G(P)$ whose denominator is a monomial in the squared lengths of diagonals of $G$.
\end{prop}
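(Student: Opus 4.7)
My plan is to proceed by induction on $n=|V(G)|$. The base case $n=4$ is handled by formula~\eqref{eq:f=} in Proposition~\ref{prop:heron-propagates}: here $G$ has a single diagonal $e$, $\{i,j\}$ is the complementary diagonal of the $4$-cycle, and the formula writes $x_{ij}$ as a Laurent polynomial in $\xS_G(P)$ with denominator $x_e$.

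For the inductive step ($n\geq 5$), I first establish a combinatorial fact: the two ears of~$G$ (a triangulated polygon always has at least two) must have tips exactly $i$ and~$j$, since any ear tip $v\neq i,j$ would have no incident diagonal, contradicting the trimmed hypothesis (which forces $v$ to lie on some diagonal crossing~$\{i,j\}$). Let $e=\{i-1,i+1\}$ be the ear diagonal at~$i$; since $\{i,j\}$ is a diagonal, $e$ crosses $\{i,j\}$. Set $G'=\mu_e(G)$, flipping $e$ to some $\{i,k\}$, which shares vertex~$i$ with $\{i,j\}$ and so does not cross~$\{i,j\}$.

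The key technical step is analyzing the ``snake'' structure of the crossing diagonals of~$G$. First one shows that in a trimmed triangulation, every diagonal of~$G$ crosses~$\{i,j\}$ (a diagonal confined to one arc would leave interior vertices uncovered); order these as $d_1=e,d_2,\ldots,d_{n-3}$ so that $d_k,d_{k+1}$ share a triangle. Since the triangle between $d_1$ and~$d_2$ has exactly two diagonal sides and one polygon side, $d_1$ and~$d_2$ share exactly one vertex; the other endpoint $v_\star\in\{i-1,i+1\}$ of~$e$ has $e$ as its unique incident diagonal in~$G$, so after the flip it has no incident crossing diagonal in~$G'$ and is dropped in $G''\colonequals \tau(G',i,j)$. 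Hence $|V(G'')|\leq n-1$, and by induction $x_{ij}$ is a Laurent polynomial in $\xS_{G''}(P)$ with denominator a monomial in $\xx_{D(G'')}(P)$. Applying Lemma~\ref{lem:laurent-trim} then transports this to the required form in $\xS_G(P)$.

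The main obstacle is verifying the hypotheses of Lemma~\ref{lem:laurent-trim} at each step, namely that $\{i,j\}\notin D(G')$ and that $\{i,j\}$ remains a diagonal (rather than becoming a side) of the $G''$-cycle. The first holds because $k=j$ would force $\{i\pm 1,j\}$ to be non-crossing diagonals in~$G$, contradicting the trimmed condition. The second can only fail if $v_\star$ sits at cyclic distance~$2$ between $i$ and~$j$, i.e., $|j-i|=2$ and $v_\star$ lies on the ``short side'' of~$\{i,j\}$; but the snake analysis shows that in this case all crossing diagonals form a fan at the unique vertex on the short arc, forcing $d_2$ to share that vertex with $d_1$ and thereby placing $v_\star$ on the \emph{long} arc, so no conflict arises.
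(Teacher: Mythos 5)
Your proposal is correct and follows essentially the same route as the paper: induction on $n$ with base case $n=4$ given by Bretschneider's formula~\eqref{eq:bretschneider}, and inductive step obtained by flipping the ear diagonal $e=\{i-1,i+1\}$ (which exists because no diagonal of a trimmed $G$ is incident to~$i$) and then applying Lemma~\ref{lem:laurent-trim} to the trimming $G''=\tau(\mu_e(G),i,j)$. The only difference is that you explicitly verify several points the paper leaves implicit --- that $\{i,j\}\notin D(G')$, that $G''$ genuinely loses a vertex, and that $\{i,j\}$ stays a diagonal of $G''$ --- and these verifications are sound.
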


\begin{proof}
We induct on $n$. The base $n\! =\! 4$ follows from Bretschneider's formula~\eqref{eq:bretschneider}. Let $n \!> \!4$. Note that no diagonal of $G$ is incident to~$i$; hence $e\!=\!\{i\!-\!1,i\!+\!1\} \in D(G)$. (Here and below we work modulo~$n$.) The triangulation $G'=\mu_e(G)$ includes a diagonal incident to vertex~$i$, hence is not trimmed with respect to~$\{i,j\}$. By Lemma~\ref{lem:laurent-trim}, it suffices to show that $x_{ij}$ has a Laurent expression in terms of  $\xS_{G''}(P)$ with denominator a monomial in~$\xx_{D(G'')}(P)$. Since $G''$ has fewer vertices than~$G$, we can invoke the induction hypothesis.
\end{proof}

\begin{prop}
\label{prop:laurent-sijk}
Let $G$ be a triangulated $n$-gon, trimmed with respect to a triple $(i,j,k)$. 
Then $S_{ijk}$ can be expressed as a Laurent polynomial in $\xS_G(P)$ whose denominator is a monomial in $\xx_{D(G)}(P)$.
\end{prop}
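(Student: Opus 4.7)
The proof plan mirrors the structure of the proof of Proposition~\ref{prop:laurent-xij}, inducting on~$n$. In the base case $n=4$, if $(i,j,k)$ is a triangle of~$G$ then $S_{ijk}\in\xS_G(P)$ directly; otherwise, since the unique diagonal of~$G$ must share exactly one vertex with the triple (any other configuration would make $(i,j,k)$ a triangle), we may relabel within the triple so that the diagonal is $\{j,\ell\}$, where $\ell$ is the fourth vertex. Combining Proposition~\ref{prop:heron-propagates} with the horizontal reflection supplied by Proposition~\ref{prop:diamond-reflect} then yields the explicit identity
\[
S_{ijk} \;=\; \frac{S_{jk\ell}\,(x_{j\ell}-x_{i\ell}+x_{ij})\;+\;S_{ij\ell}\,(x_{j\ell}+x_{jk}-x_{k\ell})}{2\,x_{j\ell}},
\]
which is a Laurent polynomial in $\xS_G(P)$ whose denominator is a monomial in the sole diagonal length~$x_{j\ell}$.

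For the inductive step $n\geq 5$, if $(i,j,k)$ is already a triangle of~$G$ there is nothing to prove. Otherwise, I would reduce to a smaller trimmed triangulation via the signed-area form of Lemma~\ref{lem:laurent-trim}: the goal is to exhibit a diagonal $e\in D(G)$ such that $G'=\mu_e(G)$ is \emph{not} trimmed with respect to $(i,j,k)$, so that the inductive hypothesis applies to $G''=\tau(G',i,j,k)$, a triangulation with strictly fewer vertices. Existence of such an~$e$ follows from a structural observation: since $G=\tau(G,i,j,k)$, every vertex in $V(G)\setminus\{i,j,k\}$ is an endpoint of some diagonal in $D(G)$ crossing one of $\{i,j\},\{j,k\},\{i,k\}$, hence has degree at least three, so every ear of~$G$ lies in~$\{i,j,k\}$. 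Since every triangulation has at least two ears and $(i,j,k)$ is not a triangle of~$G$ (ruling out the degenerate case $\{v-1,v+1\}\subseteq\{i,j,k\}$ for every ear~$v$), one can select an ear $v\in\{i,j,k\}$ whose ear diagonal $e=\{v-1,v+1\}$ has at least one endpoint outside~$\{i,j,k\}$, chosen so that this outside endpoint's only crossing-justification in $D(G)$ is the diagonal~$e$ itself; flipping~$e$ then strips the outside endpoint from the trimmed set of~$G'$, breaking trimming.

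The principal technical obstacle is verifying the existence of an ear with the above properties in full generality. The subtlety is illustrated by the $n=5$ case with $G=\{\{1,3\},\{1,4\}\}$ and triple~$(2,4,5)$: although both ears ($v=2$ and $v=5$) lie inside the triple, only the flip $\{1,3\}\mapsto\{2,4\}$ produces a non-trimmed triangulation, while $\{1,4\}\mapsto\{3,5\}$ yields another trimmed one. The required existence proof relies on a careful cyclic-order analysis of the three arcs separated by $\{i,j\},\{j,k\},\{i,k\}$---in particular controlling where the apex~$u$ of the triangle of~$G$ sharing $e$ with the ear lies relative to these arcs, so that the replacement diagonal $\{v,u\}$ does not itself cross a triple pair and thereby import the expelled vertex back into the trimming. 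This step parallels and generalizes the ``no diagonal of $G$ is incident to~$i$'' observation used in the proof of Proposition~\ref{prop:laurent-xij}; once it is settled, Lemma~\ref{lem:laurent-trim} closes the induction.
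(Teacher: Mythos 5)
Your overall strategy is the same as the paper's: induct on $n$, use Lemma~\ref{lem:laurent-trim} after flipping a suitable ear diagonal so that trimming breaks, and your $n=4$ formula is a correct restatement of \eqref{eq:r=}--\eqref{eq:s=} after the reflection of Proposition~\ref{prop:diamond-reflect}. However, the existence claim on which your inductive step rests is \emph{false}, and the obstruction is not the $n=5$ subtlety you flag but a genuinely exceptional configuration you have not identified. Take $n=6$, $G$ with diagonals $\{1,3\},\{3,5\},\{1,5\}$, and the triple $(2,4,6)$. This $G$ is trimmed with respect to $(2,4,6)$; its three ears are $2,4,6$, all inside the triple, and each ear diagonal has both endpoints outside the triple. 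Every one of the outside vertices $1,3,5$ is an endpoint of \emph{two} diagonals crossing pairs from $\{2,4,6\}$, so your selection criterion (``the outside endpoint's only crossing-justification is $e$ itself'') is never satisfiable. Worse, one checks directly that all three flips ($\{1,3\}\mapsto\{2,5\}$, $\{3,5\}\mapsto\{1,4\}$, $\{1,5\}\mapsto\{3,6\}$) produce triangulations that are \emph{still} trimmed with respect to $(2,4,6)$, so Lemma~\ref{lem:laurent-trim} never yields a triangulation with fewer vertices and the induction cannot close. No amount of ``careful cyclic-order analysis'' will rescue the statement, because the statement itself fails here.

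The paper escapes this case by a different mechanism: the additive identity \eqref{eq:split-ijkl} gives
\[
S_{246} \;=\; S_{123}+S_{345}+S_{156}+S_{135}-S_{234}-S_{456}-S_{126},
\]
where the first four terms lie in $\xS_G(P)$ and each of $S_{234}$, $S_{456}$, $S_{126}$ is supported on a trimmed triangulated \emph{pentagon} inside $G$, to which the induction hypothesis applies. Your proposal contains no analogue of this step. Separately, even away from the exceptional case your argument defers the entire case analysis (which ear to flip when a diagonal is or is not incident to one of $i,j,k$, and why the replacement diagonal cannot re-justify the expelled vertex) to an unproven ``structural observation''; in the paper this is precisely the content of the final two paragraphs of the proof, culminating in the characterization \eqref{eq:exception-hexagon} of when all flips fail. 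So the proposal has a genuine gap: it is missing both the identification of the exceptional hexagon and the additive identity needed to handle it.
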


\begin{proof}
As in the proof of Proposition~\ref{prop:laurent-xij}, we induct on~$n$. For $n = 4$, the claim is immediate from equations \eqref{eq:r=}--\eqref{eq:s=}. 

For $n = 5$, all triangulations of a pentagon are equivalent up to cyclic renumbering of the vertices, so we can assume that $G$ has diagonals $\{1,3\}$ and $\{1,4\}$, see Figure~\ref{fig:laurent-triangulations}. Since $G$ is trimmed with respect to $(i,j,k)$, this triple must contain both~2~and~5. Applying Lemma~\ref{lem:laurent-trim} with $e\!=\!\{1,3\}$ and $(i,j,k)\!=\!(1,2,5)$ (resp., $(2,4,5)$), we conclude that $S_{125}$ (resp., $S_{245}$) can be written as a Laurent polynomial in $\xS_G(P)$, with denominator a monomial in $\xx_{D(G)}(P)\!=\!\{x_{13},x_{14}\}$. The case of $S_{235}$ is similar. 

Let us now consider the case when $n = 6$ and $G$ is the tri\-angulation with diagonals $\{1,3\}$, $\{3,5\}$, and $\{1,5\}$, see Figure~\ref{fig:laurent-triangulations}. The triples $(2,3,4)$, $(4,5,6)$, $(1,2,6)$ are contained in the triangulated pentagons $\{1,2,3,4,5\}$, $\{1,3,4,5,6\}$, and $\{1,2,3,5,6\}$ respectively; therefore $S_{234}$, $S_{456}$, and $S_{126}$ can be expressed as Laurent polynomials in $\xS_G(P)$, with denominator a monomial in $\xx_{D(G)}(P)$. 
The identity 
\[
S_{234}+S_{456}+S_{126} + S_{246} = S_{123}+S_{345}+S_{156}+S_{135}
\]
implies that $S_{246}$, too, can be expressed in such a form.

In general, suppose that $G$ includes a diagonal~$e$ incident to~$i$. Then $e$ crosses~$\{j,k\}$ (because $G$ is trimmed with respect to $(i,j,k)$). No diagonal of $G$ is incident to~$j$,~or else it would have to intersect $\{i,k\}$, hence $e$ as well. Thus $f\!=\!\{j\!-\!1,j\!+\!1\} \in D(G)$. Note that the triangulation $G'=\mu_f(G)$ is not trimmed with respect to $(i,j,k)$. Therefore by Lemma~\ref{lem:laurent-trim} and the induction hypothesis, $S_{ijk}$ can be expressed as a Laurent polynomial in $\xS_G(P)$ with denominator a monomial in $\xx_{D(G)}(P)$.

\begin{figure}[ht]
\begin{center}
\includegraphics[height=1.7in]{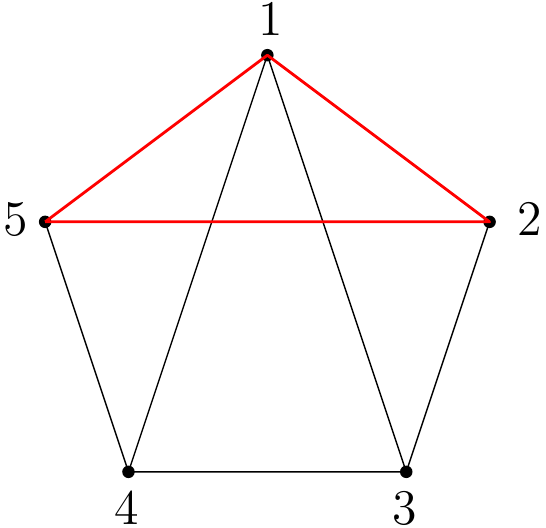} \hspace{0.3in}
\includegraphics[height=1.5in, trim=0 -4mm 0 5mm]{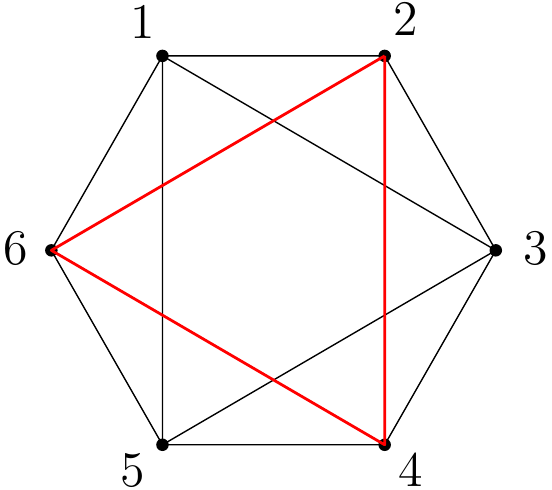}
\end{center}
\caption{Base cases appearing in the proof of Proposition~\ref{prop:laurent-sijk}.}
\vspace{-.2in}
\label{fig:laurent-triangulations}
\end{figure}

It remains to treat the case when no diagonal of $G$ incident to $i$, $j$, or $k$ exists. Then the diagonals $\{i-1,i+1\}$, $\{j-1,j+1\}$, $\{k-1,k+1\}$ must all appear in~$G$ (as before, we work modulo~$n$). Let $G'$ denote the triangulation obtained from $G$ by flipping $\{i-1,i+1\}$ to $\{i,v\}$, $v \in \{1,\dots,n\}$. If $\{i,v\}$ does not cross $\{j,k\}$, then Lemma~\ref{lem:laurent-trim} and the induction hypothesis apply. If $\{i,v\} \in E(G')$ crosses $\{j,k\}$, then so do $\{i-1,v\}, \{i+1,v\} \in E(G)$. It is then straightforward to check that unless 
\begin{equation}
\label{eq:exception-hexagon}
\{\{i-1,v\}, \{i+1,v\}\} = \{\{j-1,j+1\}, \{k-1,k+1\}\}, 
\end{equation}
flipping $\{j\!-\!1,j\!+\!1\}$ or $\{k\!-\!1,k\!+\!1\}$ will transform~$G$ into a triangulation that is not trimmed, in which case we can apply Lemma~\ref{lem:laurent-trim} and the induction hypothesis. 
In~the remaining case, condition \eqref{eq:exception-hexagon} forces $n\!=\!6$, with $G$ the triangulation shown~in Figure~\ref{fig:laurent-triangulations} and $\{i,j,k\}\!=\!\{2,4,6\}$, up to renumbering; this case was treated~above. 
\end{proof}

\begin{proof}[Proof of Theorem~\ref{thm:laurentness-full}]
In view of Remark~\ref{rem:can-trim}, the theorem this follows from Propositions~\ref{prop:laurent-xij} and~\ref{prop:laurent-sijk}. 
\end{proof}


The following algebraic statement strengthens Corollary~\ref{cor:frieze-is-recurrence}. 

\begin{cor}
\label{cor:Laurentness-Heronian-frieze}
Let $\pi$ be a traversing path. Let $\mathbf{z}_-$ denote a collection of complex numbers assigned to the indices in~$\pi$ which satisfy the appropriate Heron equations, and moreover the values at the integer nodes of~$\pi$ are nonzero. Then $\mathbf{z}_-$ can be extended to a Heronian frieze $\mathbf{z}$. Furthermore, each entry of $\mathbf{z}$ can be written as a Laurent polynomial in terms of~$\mathbf{z}_-$, with denominator a monomial in the values indexed by the integer nodes lying on~$\pi$.
\end{cor}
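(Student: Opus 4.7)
The proof will combine three ingredients already developed in the paper: the dictionary between traversing paths and triangulations (Remark~\ref{rem:path=triangulation}), the polygon reconstruction from triangulation data (Corollary~\ref{cor:triangulation-recovery}), and the Laurent phenomenon for polygon measurements (Theorem~\ref{thm:laurentness-full}). The strategy is to realize $\mathbf{z}_-$ as the measurement data $\xS_G(P)$ of a concrete plane $n$-gon $P$ attached to a triangulation $G$ determined by $\pi$, define $\mathbf{z}:=\mathbf{z}(P)$ via Definition~\ref{def:z(P)}, and then read off the Laurent structure from Theorem~\ref{thm:laurentness-full}.

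First I would spell out the dictionary explicitly. By Remark~\ref{rem:path=triangulation}, the path $\pi$ encodes a triangulation $G$ of the $n$-cycle in which every triangle has at least one side on the boundary: the $n-1$ integer-integer nodes of $\pi$ together with the $n-2$ lines $\ell_1,\dots,\ell_{n-2}$ label the $2n-3$ edges of $G$ (with the interior integer nodes corresponding exactly to the $n-3$ diagonals of $G$, and the remaining integer/line indices corresponding to sides), while the $n-2$ half-integer nodes of $\pi$ label the $n-2$ triangles of $G$. Under this identification, the hypothesis that the entries of $\mathbf{z}_-$ satisfy the appropriate Heron equations translates to the statement that the numbers $(x_{ij})\sqcup(S_{ijk})$ labeled by the edges and triangles of $G$ satisfy $S_{ijk}^2=H(x_{ij},x_{jk},x_{ik})$ for every triangle of $G$; the assumption that the integer-node values are nonzero covers, in particular, the squared lengths of the diagonals of $G$.

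With this in hand, the existence of $\mathbf{z}$ is a direct invocation of Corollary~\ref{cor:triangulation-recovery}: there is an $n$-gon $P$ (unique up to $\textup{Aut}(\AAA)$) with $\xS_G(P)$ equal to the above data. Setting $\mathbf{z}:=\mathbf{z}(P)$ via Definition~\ref{def:z(P)} yields a Heronian frieze by Proposition~\ref{prop:identities-ijkl}, and by construction $\mathbf{z}|_\pi=\mathbf{z}_-$. Uniqueness follows from Corollary~\ref{cor:frieze-is-recurrence}, once one notes that the nonvanishing of $\mathbf{z}_-$ at the integer nodes of $\pi$ together with the Heronian-diamond recurrences of Proposition~\ref{prop:heron-propagates} and Corollary~\ref{cor:heronian-propagate-near-boundary} force the nonvanishing condition~\eqref{eq:nonzero-interior} to propagate across the frieze (at least on the Zariski-open locus relevant here).

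For the Laurent assertion I would apply Theorem~\ref{thm:laurentness-full} to $P$ and $G$: every $x_{ij}(P)$ and every $S_{ijk}(P)$ can be written as a Laurent polynomial in $\xS_G(P)=\mathbf{z}_-$ whose denominator is a monomial in $\xx_{D(G)}(P)$. By Definition~\ref{def:z(P)}, every entry of $\mathbf{z}$ is, after reduction modulo~$n$, either one of these measurements of $P$ or a side length $x_{\langle i\rangle\langle i+1\rangle}(P)$ (appearing at an $L_n$-index or at a boundary integer node), and in each case Theorem~\ref{thm:laurentness-full} furnishes the required expression. Since $\xx_{D(G)}(P)$ is precisely the set of values of $\mathbf{z}_-$ at the interior integer nodes of $\pi$, the denominator is indeed a monomial in the integer-node values of $\mathbf{z}_-$. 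The only genuinely non-routine step, and the one I would expect to take some care, is verifying the combinatorial dictionary of paragraph~two cleanly: that the path/triangulation correspondence of Remark~\ref{rem:path=triangulation} matches edges of $G$ to integer nodes $\cup$ lines of $\pi$ and triangles of $G$ to half-integer nodes of $\pi$, with diagonals of $G$ corresponding exactly to the interior integer nodes. Once that bookkeeping is pinned down, the rest of the argument is a direct citation chain.
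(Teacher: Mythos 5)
Your proposal is correct and follows exactly the paper's own proof: translate $\pi$ into a triangulation $G$ via Remark~\ref{rem:path=triangulation}, invoke Corollary~\ref{cor:triangulation-recovery} to produce a polygon $P$ with $\xS_G(P)=\mathbf{z}_-$, set $\mathbf{z}=\mathbf{z}(P)$, and apply Theorem~\ref{thm:laurentness-full}. The extra material on uniqueness of the extension is not needed for the statement (which only asserts existence), but it does no harm.
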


\pagebreak[3]

\begin{proof}
Let $G$ be the triangulated cycle corresponding to the path~$\pi$, 
cf.\ Remark~\ref{rem:path=triangulation}. 
In light of Corollary~\ref{cor:triangulation-recovery}, there exists a unique polygon~$P$ whose measurements in $\xS_G(P)$ match those in~$\mathbf{z}_-$. 
Now set $\mathbf{z}=\mathbf{z}(P)$ and apply Theorem~\ref{thm:laurentness-full}.
\end{proof}


Combining Corollary~\ref{cor:Laurentness-Heronian-frieze} with Corollary~\ref{cor:frieze-is-recurrence}, we obtain: 

\begin{cor}
\label{cor:Laurentness-Heronian-frieze-2}
Let $\mathbf{z}$ be a Heronian frieze satisfying the nonvanishing condition~\eqref{eq:nonzero-interior}. Then each entry of $\mathbf{z}$ can be written as a Laurent polynomial in terms of the entries lying on an arbitrary traversing path~$\pi$. The denominator of this Laurent polynomial is a monomial in the values indexed by the integer nodes lying on~$\pi$.
\end{cor}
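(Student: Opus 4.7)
The plan is to pass from the algebraic statement about the frieze $\mathbf{z}$ to a geometric statement about a polygon, invoke Theorem~\ref{thm:laurentness-full}, and translate back. The bridge is Corollary~\ref{cor:frieze-to-polygon}, which guarantees that any Heronian frieze satisfying the nonvanishing hypothesis~\eqref{eq:nonzero-interior} is of the form $\mathbf{z} = \mathbf{z}(P)$ for some (unique, up to $\textup{Aut}(\AAA)$) plane $n$-gon $P$.

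First I would invoke Corollary~\ref{cor:frieze-to-polygon} to pick a polygon $P$ with $\mathbf{z} = \mathbf{z}(P)$. Next, for an arbitrary traversing path $\pi$, I would use Remark~\ref{rem:path=triangulation} to associate $\pi$ with a triangulation $G$ of the $n$-cycle whose triangles correspond to the triples indexing the half-integer nodes of $\pi$ and whose diagonals sit at the interior integer nodes of $\pi$. Under the dictionary \eqref{eq:pattern-restriction-start}--\eqref{eq:pattern-restriction-end} of Definition~\ref{def:z(P)}, the entries of $\mathbf{z}$ along $\pi$ are precisely the measurements in $\xS_G(P)$, and the interior integer nodes of $\pi$ (the $(i,j)\in\ZZ^2$ on $\pi$ with $2\le j-i\le n-2$) correspond to the diagonals in $D(G)$.

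At this point Theorem~\ref{thm:laurentness-full} applies directly: every measurement in $\xS(P)$ can be written as a Laurent polynomial in $\xS_G(P)$ with denominator a monomial in $\xx_{D(G)}(P)$. Translating back through Definition~\ref{def:z(P)}, every entry of $\mathbf{z}$ becomes a Laurent polynomial in the values of $\mathbf{z}$ along $\pi$, with denominator a monomial in the values at the integer nodes of $\pi$, as claimed.

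The only work involved is indexing bookkeeping---matching traversing paths with triangulations, and integer nodes on $\pi$ with diagonals of $G$---but this is essentially the content of Remark~\ref{rem:path=triangulation}. In particular, the conclusion of the corollary is slightly weaker than what Theorem~\ref{thm:laurentness-full} actually delivers: the denominators in fact involve only the \emph{interior} integer nodes of $\pi$, not the boundary ones (which correspond to sides of $P$ rather than diagonals of $G$). No refinement of the geometric result is required, and there is no serious obstacle beyond the translation.
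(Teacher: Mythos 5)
Your proposal is correct and follows essentially the same route as the paper: the paper derives this corollary by combining Corollary~\ref{cor:Laurentness-Heronian-frieze} (which is itself proved by constructing a polygon from the path data and applying Theorem~\ref{thm:laurentness-full}) with Corollary~\ref{cor:frieze-is-recurrence}, while you package the same ingredients via Corollary~\ref{cor:frieze-to-polygon}, whose proof is exactly that combination. Your closing observation that the denominators in fact involve only the interior integer nodes with $2\le j-i\le n-2$ (the diagonals of~$G$) is accurate and consistent with the statement as given.
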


\begin{example}
Let $\pi$ be the traversing path at the left rim of Figure~\ref{fig:heronian-frieze-example}. The values of the frieze lying on~$\pi$ are: 
\[
1, 2, 2, -2, 5, 2, 2, 2, 1; 1, 1, 1, 1
\]
(the last 4 values, all equal to~1, are associated with the dashed lines). 
In agreement with Corollary~\ref{cor:Laurentness-Heronian-frieze-2}, all values in the frieze are rational numbers whose denominators only have prime factors equal to 2 or~5. (In fact, the only denominator that shows up in this particular example is~5.) 
\end{example}

In the remainder of this section, we present an alternative approach to the Laurent phenomenon for Heronian friezes. While more technical than the proof of Theorem~\ref{thm:laurentness-full} given above, this approach produces a stronger (and more explicit) result. 

\begin{defn}
\label{defn:thin-triangulation}
A triangulation $G$ of an $n$-cycle with vertices $1,\dots,n$ is called \emph{thin} if it does not include three diagonals forming a triangle. By Remark~\ref{rem:trimmed-is-thin}, every trimmed triangulation is thin. Conversely, every thin triangulation~$G$ is trimmed with respect to a unique diagonal $\{b,c\}$, namely the diagonal connecting the only two vertices $b$ and~$c$ which are not incident to any diagonal in~$G$. 
\end{defn}

In the case of triangles, Remark~\ref{rem:can-trim} can be strengthened as follows. 

\begin{lem}
\label{lem:trim-triple}
Let $G$ be a triangulation trimmed with respect to a triple $(i,j,k)$. Suppose that for any $\ell\notin\{i,j,k\}$, the triangulation $G$~is trimmed with respect to at least one of the triples $(i,j,\ell)$, $(i,k,\ell)$, $(j,k,\ell)$. 
Then $G$ is trimmed with respect to at least one of $\{i,j\}$, $\{i,k\}$, or~$\{j,k\}$. In particular, $G$~is thin. 
%
\end{lem}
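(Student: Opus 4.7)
The second conclusion (thinness of $G$) is immediate from the first together with Remark~\ref{rem:trimmed-is-thin}, so I focus on showing that $G$ is trimmed with respect to at least one of $\{i,j\}$, $\{i,k\}$, $\{j,k\}$.

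My plan is to argue by contradiction. Partition the cyclic set of vertices by the triple $\{i,j,k\}$ into three open arcs $A$, $B$, $C$, say $A$ between $i$ and $j$, $B$ between $j$ and $k$, $C$ between $k$ and $i$. A direct inspection of which diagonals cross which of the three segments $\{i,j\}, \{i,k\}, \{j,k\}$ shows the following: a $G$-diagonal incident to a vertex $v\in A$ crosses at least one of the three segments if and only if its other endpoint lies in $B\cup C$, and symmetrically for $v\in B, C$. Hence trimming of $G$ with respect to $(i,j,k)$ is equivalent to saying that every vertex in $A$ (resp.\ $B$, $C$) has an incident $G$-diagonal to $B\cup C$ (resp.\ $A\cup C$, $A\cup B$). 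Trimming with respect to the single edge $\{i,j\}$ is the stronger requirement that every vertex in $B\cup C\cup\{k\}$ has a $G$-diagonal to $A$, and analogously for $\{i,k\}$ and $\{j,k\}$.

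If $G$ is trimmed with respect to none of the three edges, I obtain witnesses $\ell_1\in B\cup C\cup\{k\}$, $\ell_2\in A\cup B\cup\{j\}$, $\ell_3\in A\cup C\cup\{i\}$ with no $G$-diagonal to $A$, $C$, $B$, respectively. Choose one of these witnesses lying outside $\{i,j,k\}$ and call it $\ell$; the degenerate subcase $(\ell_1,\ell_2,\ell_3)=(k,j,i)$, in which all extra constraints are borne by the distinguished vertices themselves, is handled separately by picking any $\ell\notin\{i,j,k\}$, which exists because $n\ge 4$. The hypothesis applied to this $\ell$ asserts that $G$ is trimmed with respect to one of the triples $(i,j,\ell)$, $(i,k,\ell)$, $(j,k,\ell)$.

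I would then check, case by case on which arc contains $\ell$ and which of the three new triples is the one that trims $G$, that trimming with respect to the new triple forces the existence of some $G$-diagonal incident to one of $\ell_1$, $\ell_2$, $\ell_3$ landing in the very arc forbidden to it by its \emph{no-diagonal} constraint, contradicting the choice of witness. The symmetries of the configuration (cyclic permutation of $(i,j,k)$ exchanging $A, B, C$) reduce the number of essentially distinct cases to a manageable handful. The principal obstacle will be bookkeeping: the vertex $\ell$ subdivides an arc into two sub-arcs which interact differently with each edge of the new triangle, and one must track carefully, for each witness, on which side of each new segment it sits in order to convert the triple-trimming condition into a diagonal that is explicitly outlawed.
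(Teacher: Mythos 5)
Your setup (arcs $A,B,C$, witnesses of non-trimmedness, feeding a witness into the hypothesis) is a reasonable plan, but the proof is not there: everything after ``I would then check, case by case'' is exactly where the content of the lemma lives, and the mechanism you announce for closing those cases demonstrably fails. Consider the case where your chosen witness is $\ell=\ell_1\in B$ (no $G$-diagonal from $\ell$ to $A$) and the hypothesis hands you that $G$ is trimmed with respect to $(i,j,\ell)$. All three of $i,j,\ell$ are vertices of the new triangle, so its trimming condition imposes nothing on them --- in particular it cannot force a diagonal from $\ell$ into $A$. Nor does it push the other witnesses into their forbidden arcs: writing $B_1$ and $B_2$ for the parts of $B$ between $j$ and $\ell$ and between $\ell$ and $k$, the new condition only forces each vertex of $B_2\cup\{k\}\cup C$ to acquire a diagonal into $A\cup B_1\cup\{j\}$ and each vertex of $A$ to acquire one into $B\cup C\cup\{k\}$; so a witness $\ell_3\in C$ (forbidden from $B$) can comply by going to $A\cup\{j\}$, and a witness $\ell_2\in B_1$ (forbidden from $C$) can comply by going to $A\cup\{i\}\cup B_2\cup\{k\}$. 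No contradiction of the advertised form appears. The case is in fact impossible, but for a different reason: since $G$ is trimmed with respect to $(i,j,k)$ and $\ell$ avoids $A$, the vertex $\ell$ carries a diagonal $D_\ell$ into $C\cup\{i\}$, and the diagonal into $A\cup B_1\cup\{j\}$ that the new triple forces upon the vertex $k$ would have to cross $D_\ell$, contradicting the planarity of the triangulation --- an argument about crossings with existing diagonals, pivoting on a vertex ($k$) that need not be a witness at all. You have supplied neither this argument nor any substitute.

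This is symptomatic of why the paper organizes the proof differently: it works with the sets $D(G)[i,j]$, $D(G)[i,k]$, $D(G)[j,k]$ of diagonals crossing each side (using that trimmedness with respect to the triple forces their union to be all of $D(G)$), first disposes of the case where some pairwise intersection is empty, and in the remaining case manufactures a special vertex $\ell$ --- not a witness, but a vertex built from three diagonals $D_i,D_j,D_k$ realizing the three pairwise crossing patterns --- for which one exhibits, for each of the three triples, an explicit diagonal of $G$ crossing none of its sides. Two further slips in your write-up: the crossing criterion should read ``other endpoint in $B\cup C\cup\{k\}$'' rather than $B\cup C$ (a diagonal from a vertex of $A$ to $k$ does cross $\{i,j\}$), and you never address the case where one of $\{i,j\},\{i,k\},\{j,k\}$ is a side of the $n$-cycle, where the corresponding arc is empty and ``trimmed with respect to that side'' is not even defined; the paper handles this case separately at the outset. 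As written, the proposal is a plan whose central step fails on inspection.
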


\begin{proof}
First suppose that one of the sides of the triangle $(i,j,k)$, say $\{i,j\}$, lies on the distinguished $n$-cycle; that is, $i\equiv j\pm 1 \bmod n$. (If there are two such sides, then $G$ is trimmed with respect to the third one.) It is easy to see that in this case, $G$ cannot simultaneously include (a) a diagonal that crosses $\{i,k\}$ but not $\{j,k\}$ and (b) a diagonal that crosses $\{j,k\}$ but not $\{i,k\}$. Hence~$G$ is trimmed with respect to either $\{i,k\}$ or~$\{j,k\}$. 

So let us assume that each of $\{i,j\}$, $\{i,k\}$, and~$\{j,k\}$ is a diagonal, i.e., is not an edge of the distinguished $n$-cycle. 
Let $D(G)[i,j]$ (resp., $D(G)[i,k]$, $D(G)[j,k]$) denote the subset of~$D(G)$ consisting of those diagonals in~$G$ which cross $\{i,j\}$ (resp., $\{i,k\}$, $\{j,k\}$). If one of these three subsets coincides with~$D(G)$, then we are done, so we can assume that none does. 

Since $G$ is trimmed with respect to $(i,j,k)$, we have $D(G)[i,j]\cup D(G)[i,k]\cup D(G)[j,k]=D(G)$. Suppose for a moment that $D(G)[i,j]\cap D(G)[i,k]=\varnothing$, i.e., no diagonal of~$G$ crosses both $\{i,j\}$ and~$\{i,k\}$. By assumption, there is a diagonal $D\in D(G)$ which does not cross~$\{j,k\}$. Say $D$ crosses $\{i,j\}$. Since $D$ crosses neither $\{i,k\}$ nor $\{j,k\}$, it must terminate at~$k$. Now, since none of the diagonals in $D(G)$ can cross~$D$, but each must cross one of the sides of $(i,j,k)$, it follows that each diagonal in~$G$ crosses~$\{i,j\}$, as desired. 

It remains to treat the case when the intersection $D(G)[i,j]\cap D(G)[i,k]$ is non\-empty, and moreover both $D(G)[i,j]\cap D(G)[j,k]$ and $D(G)[i,k]\cap D(G)[j,k]$ are nonempty as well. Let $D_i\in D(G)[i,j]\cap D(G)[i,k]$, $D_j\in D(G)[i,j]\cap D(G)[j,k]$, and $D_k\in D(G)[i,k]\cap D(G)[j,k]$. Then there exists a vertex~$\ell$ such that the diagonal $\{j,\ell\}$ crosses both $D_j$ and $\{i,k\}$ but neither $D_i$ nor~$D_k$. We now observe that the diagonal $D_i$ does not cross any of the sides of $(j,k,\ell)$, the diagonal $D_j$ does not cross any of the sides of $(i,k,\ell)$, and the diagonal $D_k$ does not cross any of the sides of $(i,j,\ell)$. In other words, the triangulation~$G$ is not trimmed with respect to each of the triples $(i,j,\ell)$, $(i,k,\ell)$, $(j,k,\ell)$, a contradiction. 
\end{proof}

\begin{rem}
\label{rem:reduce-to-trim}
We already noted, cf.\ Remark~\ref{rem:can-trim} and the proof of Theorem~\ref{thm:laurentness-full}, that it is sufficient to establish the Laurent phenomenon in the case when the triangulation~$G$ at hand is trimmed with respect to the measurement in question. In the case when the measurement is a squared distance~$x_{ij}$, this immediately implies that $G$ is thin. In the case of a signed area~$S_{ijk}$, we can assume that the triangulation~$G$, in addition to being trimmed with respect to~$(i,j,k)$, is also trimmed with respect to at least one of the triples $(i,j,\ell)$, $(i,k,\ell)$, $(j,k,\ell)$. (Otherwise, we can invoke the additive identity~\eqref{eq:split-ijkl} and then use trimming to induct on~$n$, the number of vertices.) 
Hence Lemma~\ref{lem:trim-triple} applies, meaning that we may assume that $G$ is trimmed with respect to one of the sides of $(i,j,k)$, and in particular is thin. 
\end{rem}

\begin{defn}
\label{defn:G-P-thin}
Let $G$ be a thin triangulation of an $n$-cycle, trimmed with respect to the diagonal~$D=\{c,n\}$, cf.\ Definitions~\ref{defn:trimmed} and~\ref{defn:thin-triangulation}. The $n-3$ diagonals of~$G$, together with $\{c-1,c\}$ and $\{1,n\}$, form the edge set of a spanning tree of~$G$. We~denote these $n-1$ edges by $D_2,\dots,D_n$, so that $D_2=\{c-1,c\}$, $D_n=\{1,n\}$, and for every $j\in\{2,\dots,n\}$, the edges $D_j$ and $D_{j+1}$ are two sides of a triangle in~$G$. 

Let $P=(A_1,\dots,A_n)$ be a polygon on the plane~$\AAA$. We continue to use the notation from Definition~\ref{defn:polygon} for the measurements $x_{ij}=x_{ij}(P)$ and $S_{ijk}=S_{ijk}(P)$. Let $v_2,\dots,v_n$ be the vectors corresponding to the edges $D_2,\dots,D_n$ of a thin triangulation~$G$ as described above; to be more precise, 
\begin{equation}
\label{eq:vk}
\text{$v_k=\overrightarrow{A_iA_j}$, where $D_k=\{i,j\}, i<j$.}
\end{equation}
We then define, for $2\le j \le n-1$: 
\begin{align}
\label{eq:Sj-via-vj}
S_j&=S_j(P)=2[v_j,v_{j+1}] , \\
T_j&=T_j(P)=2\langle v_j,v_{j+1}\rangle , 
\end{align}
It will also be helpful to introduce the following notation, for  $2\le a < b\le n$:
\begin{align}
\label{eq:sigma-even-ab}
\Sigma_{\textup{even}}(a,b) &= \sum_{\substack{J\subseteq\{a,\dots,b-1\}\\ |J| \text{ even}}} (-1)^{|J|/2}
\Bigl(\,\prod_{j \in J} S_j\Bigr)\Bigl(\,\prod_{\substack{a\le j< b \\j\notin J}}T_j\Bigr) ,
\\
\label{eq:sigma-odd-ab}
\Sigma_{\textup{odd}}(a,b) &= \sum_{\substack{J\subseteq\{a,\dots,b-1\}\\ |J| \text{ odd}}} (-1)^{(|J| - 1)/2}
\Bigl(\,\prod_{j \in J} S_j\Bigr)\Bigl(\,\prod_{\substack{a\le j< b \\j\notin J}}T_j\Bigr) .
\end{align}
\end{defn}

\begin{lem}
\label{lem:STsigma}
Every $S_j$, $T_j$, $\Sigma_{\textup{even}}(a,b)$ and~$\Sigma_{\textup{odd}}(a,b)$ is a polynomial with integer coefficients in the measurements in~$\xS_G(P)$. 
\end{lem}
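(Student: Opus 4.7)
Since the polynomials $\Sigma_{\textup{even}}(a,b)$ and $\Sigma_{\textup{odd}}(a,b)$ are defined in \eqref{eq:sigma-even-ab}--\eqref{eq:sigma-odd-ab} as integer-coefficient combinations of products of the $S_j$'s and $T_j$'s, it suffices to establish the statement for each $S_j$ and each $T_j$ individually. The crucial geometric input comes from Definition~\ref{defn:G-P-thin}: for every $j\in\{2,\dots,n-1\}$ the edges $D_j$ and $D_{j+1}$ are two sides of a common triangle $\Delta_j$ of~$G$, say with vertex set $\{p,q,r\}\subseteq\{1,\dots,n\}$. Since the three sides of $\Delta_j$ are edges of~$G$, the quantities $x_{pq}$, $x_{pr}$, $x_{qr}$, and $S_{pqr}$ all belong to $\xS_G(P)$.

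Now, the vectors $v_j$ and $v_{j+1}$ defined by~\eqref{eq:vk} point along sides of $\Delta_j$ and share exactly one endpoint (up to orientation). Using the bilinearity of the volume form together with $[w,w]=0$, one computes
\[
S_j=2[v_j,v_{j+1}]=\pm S_{pqr},
\]
where the sign depends on which of the four possible pairings of endpoints of $D_j$ and~$D_{j+1}$ occurs. For the inner product, the polarization identity $\langle u\pm v,u\pm v\rangle=\langle u,u\rangle\pm 2\langle u,v\rangle+\langle v,v\rangle$, applied with $u=v_j$ and $v=v_{j+1}$, expresses $T_j$ as a signed sum of $x_{pq}$, $x_{pr}$, $x_{qr}$: in each endpoint-sharing case, either $v_j+v_{j+1}$ or $v_j-v_{j+1}$ is (up to sign) the vector along the third side of $\Delta_j$, so its squared length equals one of these three $x$'s, while $\langle v_j,v_j\rangle$ and $\langle v_{j+1},v_{j+1}\rangle$ are the other two. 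We conclude $T_j=\pm x_{pq}\pm x_{pr}\pm x_{qr}$.

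Both $S_j$ and $T_j$ are therefore $\ZZ$-linear combinations of entries of $\xS_G(P)$; substituting these expressions into \eqref{eq:sigma-even-ab}--\eqref{eq:sigma-odd-ab} yields the desired integer polynomial representations of $\Sigma_{\textup{even}}(a,b)$ and $\Sigma_{\textup{odd}}(a,b)$. The only real work is bookkeeping of the four cases specifying which endpoint of $D_j$ coincides with which endpoint of $D_{j+1}$; beyond this routine case analysis, I do not foresee any substantive obstacle.
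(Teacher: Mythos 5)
Your proof is correct and follows essentially the same route as the paper's: $S_j$ is (up to sign) the signed area of the triangle of $G$ having $D_j$ and $D_{j+1}$ as two of its sides, and $T_j$ is recovered from the squared lengths of that triangle's sides via polarization (the law of cosines), after which the claim for $\Sigma_{\textup{even}}(a,b)$ and $\Sigma_{\textup{odd}}(a,b)$ is immediate from their defining formulas. Your endpoint-pairing sign bookkeeping is slightly more explicit than the paper's one-line computation, but the substance is identical.
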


\begin{proof}
First, $S_j\in\xS_G(P)$ since $S_j$ is the rescaled area of the triangle whose two sides are $D_j$ and~$D_{j+1}$ (cf.\ \eqref{eq:S(A,B,C)} and~\eqref{eq:Sijk(P)}). 
Second, note that $v_j-v_{j+1}$ is a vector linking two adjacent points on the perimeter of the polygon~$P$. Consequently, 
\[
T_j=\langle v_j-v_{j+1},v_j-v_{j+1}\rangle-\langle v_j,v_j\rangle-\langle v_{j+1},v_{j+1}\rangle  \in \ZZ[\xS_G(P)].
\]
The statement concerning $\Sigma_{\textup{even}}(a,b)$ and~$\Sigma_{\textup{odd}}(a,b)$ follows.
\end{proof}

\begin{prop}
\label{prop:brackets-thin}
In the notation of \eqref{eq:vk} and  \eqref{eq:sigma-even-ab}--\eqref{eq:sigma-odd-ab}, we have: 
\begin{align}
\label{eq:angle-bracket-formulas}
\langle v_a, v_b\rangle &= 2^{a-b}\,\Sigma_{\textup{even}}(a,b)\prod_{m=a+1}^{b-1}\langle v_m,v_m\rangle^{-1} 
,\\
\label{eq:square-bracket-formulas}
[v_a,v_b]&=2^{a-b}\,\Sigma_{\textup{odd}}(a,b)\prod_{m=a+1}^{b-1}\langle v_m,v_m\rangle^{-1} 
. 
\end{align}
In particular, both $\langle v_a, v_b\rangle$ and $[v_a,v_b]$ are Laurent polynomials with integer coefficients in the measurements  in~$\xS_G(P)$. 
In each of these Laurent polynomials, the denominator is a square-free product of the measurements $x_{ij}\in\xx_{D(G)}(P)$. 
\end{prop}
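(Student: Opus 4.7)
The plan is to induct on $b-a \ge 1$ using a pair of Grassmann--Pl\"ucker-type identities valid in the $2$-dimensional space~$\VVV$: for any $u, v, w \in \VVV$,
\begin{align*}
\langle v, v\rangle\,\langle u, w\rangle &= \langle u, v\rangle\langle v, w\rangle - [u, v][v, w], \\
\langle v, v\rangle\,[u, w] &= \langle u, v\rangle[v, w] + [u, v]\langle v, w\rangle.
\end{align*}
Both can be verified by direct expansion in coordinates on $\VVV\cong\CC^2$; equivalently, they are consequences of the linear dependence $[y,z]x+[z,x]y+[x,y]z=0$ among any three vectors in~$\VVV$. The base case $b=a+1$ is then immediate: by definition $\langle v_a, v_{a+1}\rangle = T_a/2$ and $[v_a, v_{a+1}] = S_a/2$, while on the right-hand sides of \eqref{eq:angle-bracket-formulas}--\eqref{eq:square-bracket-formulas} the empty product equals~$1$ and the only contributing subsets are $J=\varnothing$ (giving $\Sigma_{\textup{even}}(a,a+1)=T_a$) and $J=\{a\}$ (giving $\Sigma_{\textup{odd}}(a,a+1)=S_a$).

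For the inductive step, I would apply the two identities with $u = v_a$, $v = v_{b-1}$, $w = v_b$, using $\langle v_{b-1}, v_b\rangle = T_{b-1}/2$ and $[v_{b-1}, v_b] = S_{b-1}/2$, to obtain the two-term recurrence
\begin{align*}
2\langle v_{b-1}, v_{b-1}\rangle\,\langle v_a, v_b\rangle &= T_{b-1}\,\langle v_a, v_{b-1}\rangle - S_{b-1}\,[v_a, v_{b-1}], \\
2\langle v_{b-1}, v_{b-1}\rangle\,[v_a, v_b] &= T_{b-1}\,[v_a, v_{b-1}] + S_{b-1}\,\langle v_a, v_{b-1}\rangle.
\end{align*}
Substituting the inductive expressions for $\langle v_a, v_{b-1}\rangle$ and $[v_a, v_{b-1}]$ (which contribute an extra factor of $\langle v_{b-1},v_{b-1}\rangle^{-1}$ to the denominator and change $2^{a-(b-1)}$ to $2^{a-b}$) reduces the claim to the two combinatorial identities
\begin{align*}
T_{b-1}\,\Sigma_{\textup{even}}(a, b-1) - S_{b-1}\,\Sigma_{\textup{odd}}(a, b-1) &= \Sigma_{\textup{even}}(a, b), \\
T_{b-1}\,\Sigma_{\textup{odd}}(a, b-1) + S_{b-1}\,\Sigma_{\textup{even}}(a, b-1) &= \Sigma_{\textup{odd}}(a, b).
\end{align*}
These follow by splitting each sum on the right according to whether $b-1 \in J$ or not: the subsets $J\not\ni b-1$ contribute an extra factor~$T_{b-1}$, while the subsets $J\ni b-1$, rewritten as $J=J'\cup\{b-1\}$, contribute a factor $S_{b-1}$ and a sign $(-1)^{(|J'|+1)/2} = -(-1)^{(|J'|-1)/2}$ that accounts for the minus sign in front of $S_{b-1}$ in the first identity and the plus sign in the second.

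The final statement then follows at once: by Lemma~\ref{lem:STsigma}, $\Sigma_{\textup{even}}(a,b)$ and $\Sigma_{\textup{odd}}(a,b)$ lie in $\ZZ[\xS_G(P)]$, and the denominator $\prod_{m=a+1}^{b-1}\langle v_m,v_m\rangle$ is a square-free product of squared edge lengths. Since $a,b\in\{2,\dots,n\}$, the range $\{a+1,\dots,b-1\}$ is contained in $\{3,\dots,n-1\}$, and the indices $D_3,\dots,D_{n-1}$ are precisely the diagonals of~$G$ appearing in the spanning tree of Definition~\ref{defn:G-P-thin}; so each factor belongs to $\xx_{D(G)}(P)$, as required. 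I expect the only nontrivial bookkeeping to be the sign tracking in the two $\Sigma$-identities; everything else is mechanical, and the entire argument is an uncomplicated induction once the right Grassmann--Pl\"ucker identities are in hand.
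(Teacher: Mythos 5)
Your argument is correct, and every step checks out: the two bilinear identities hold in $\VVV\cong\CC^2$ (they are the components of the three-term linear dependence you cite), the base case and the two $\Sigma$-recurrences are right, including the sign bookkeeping, and the identification of the denominator factors with $\xx_{D(G)}(P)$ via $D_3,\dots,D_{n-1}$ matches Definition~\ref{defn:G-P-thin}. The route is genuinely different in presentation from the paper's, though the underlying algebra is the same. The paper adjoins a formal $\varepsilon$ with $\varepsilon^2=-1$, observes that $T_m+\varepsilon S_m=2(v_m'-\varepsilon v_m'')(v_{m+1}'+\varepsilon v_{m+1}'')$, and telescopes the product $\prod_{m=a}^{b-1}(T_m+\varepsilon S_m)$ in one stroke, reading off $\Sigma_{\textup{even}}$ and $\Sigma_{\textup{odd}}$ as the ``real'' and ``imaginary'' parts of the expansion. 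Your two Grassmann--Pl\"ucker identities are precisely the two components of the multiplicativity of that complex product, so your induction amounts to peeling off one factor at a time; your $\Sigma$-recurrences are likewise the one-step version of the paper's expansion~\eqref{eq:prod(Tk+eps-Sk):1-thin}. What the paper's version buys is the closed-form product formula itself, which powers the remark about polynomial-time evaluation of the exponentially long sums; what your version buys is that it avoids introducing the auxiliary ring $\CC[\varepsilon]/\langle\varepsilon^2+1\rangle$ and makes the signs in \eqref{eq:sigma-even-ab}--\eqref{eq:sigma-odd-ab} transparent as an artifact of the recurrence. One shared caveat, present in both arguments: the division by $\langle v_{b-1},v_{b-1}\rangle$ in your inductive step (and the ``rearranging'' at the end of the paper's proof) requires the squared diagonal lengths to be nonzero, which is the same genericity already implicit in the statement since it features $\langle v_m,v_m\rangle^{-1}$.
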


\begin{proof}
Let us adjoin a formal square root $\varepsilon=\sqrt{-1}$ to~$\CC$. In other words, our computations will be done in the ring $\CC[\varepsilon]/\langle\varepsilon^2+1\rangle$. The key observation is that for~$a < b$,
\begin{equation}
\label{eq:prod(Tk+eps-Sk):1-thin}
\prod_{m=a}^{b-1} (T_m + \varepsilon S_m) = \Sigma_{\textup{even}}(a,b) + \varepsilon \Sigma_{\textup{odd}}(a,b).
\end{equation}
Furthermore, with the notation $v_m = \left[\begin{smallmatrix}v_m'\\ v_m''\end{smallmatrix}\right]$, we have 
\[
T_m+\varepsilon S_m = 2(\langle v_m,v_{m+1}\rangle + \varepsilon [v_m, v_{m+1}]) = 2(v_m'-\varepsilon v_m'')(v_{m+1}'+\varepsilon v_{m+1}'')
\]
and consequently
\begin{align}
\notag
\prod_{k=a}^{b-1}(T_m+\varepsilon S_m) &= 2^{b-a}\prod_{m=a}^{b-1}(v_m'-\varepsilon v_m'')(v_{m+1}'+\varepsilon v_{m+1}'')\\[-1pt]
\notag
&=2^{b-a}(v_a'-\varepsilon v_a'')(v_b'+\varepsilon v_b'')\prod_{m=a+1}^{b-1}(v_m'-\varepsilon v_m'')(v_m'+\varepsilon v_m'')\\[-1pt]
\label{eq:prod(Tk+eps-Sk):2-thin}
&= 2^{b-a}(\langle v_a,v_b\rangle +\varepsilon[v_a,v_b]) \prod_{m=a+1}^{b-1}\langle v_m, v_m\rangle.
\end{align}
Comparing~\eqref{eq:prod(Tk+eps-Sk):1-thin} with~\eqref{eq:prod(Tk+eps-Sk):2-thin}, we conclude that
\begin{align}
\label{eq:angle-brackets-thin}
\Sigma_{\textup{even}}(a,b)&=
2^{b-a}\langle v_a, v_b\rangle \prod_{m=a+1}^{b-1}\langle v_m,v_m\rangle  ,
\\[-1pt]
\label{eq:square-brackets-thin}
\Sigma_{\textup{odd}}(a,b)&=
2^{b-a}[v_a,v_b] \prod_{m=a+1}^{b-1}\langle v_m,v_m\rangle  .
\end{align}
Rearranging equations~\eqref{eq:angle-brackets-thin} and~\eqref{eq:square-brackets-thin} yields \eqref{eq:angle-bracket-formulas} and~\eqref{eq:square-bracket-formulas}.
\end{proof}

For $1\le j<k\le n$, consider the unique path in the spanning tree formed by $D_2,\dots,D_n$ which connects $j$ to~$k$. We denote the length of this path by~$\ell(j,k)$. Let $i_1(j,k)\le \cdots\le i_{\ell(j,k)}(j,k)$ be the indices of the edges forming this path, so that the set of these edges is $\{D_{i_a} : 1\le a\le\ell(j,k)\}$. 


\begin{prop}
\label{prop:xjk-Sijk-Laurent-thin}
Let $(P,G)$ be a thin triangulation of a plane $n$-gon, trimmed with respect to the diagonal~$D=\{c,n\}$, see Definition~\ref{defn:G-P-thin}. 
Then 
\begin{align}
\label{eq:xjk-laurent-formulas}
x_{jk} &= \sum_{a=1}^{\ell(j,k)} \sum_{b=1}^{\ell(j,k)} (-1)^{a+b}\langle v_{i_a(j,k)},v_{i_b(j,k)}\rangle \quad (1 \le j < k < n), \\
\label{eq:Sjkn-laurent-formulas}
S_{jkn} &= 2\,\varkappa_j \varkappa_k \sum_{a = 1}^{\ell(j,k)} \sum_{b=1}^{\ell(k,n)}(-1)^{a+b}[v_{i_a(j,k)},v_{i_b(k,n)}] \quad (1 \le j < k < n),
\end{align}
where 
we use the notation
\[
\varkappa_j=\biggl\{\!\! \begin{array}{ll} 
+1 &\text{if } c\le j;\\[.05in]
-1 &\text{if } c>j.
\end{array}
\]
\end{prop}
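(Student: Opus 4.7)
The plan is to express $\overrightarrow{A_jA_k}$ and $\overrightarrow{A_kA_n}$ as signed sums along the unique tree paths from $A_j$ to~$A_k$ and from $A_k$ to~$A_n$, and then to expand $x_{jk}=\langle\overrightarrow{A_jA_k},\overrightarrow{A_jA_k}\rangle$ and $S_{jkn}=2[\overrightarrow{A_jA_k},\overrightarrow{A_kA_n}]$ bilinearly; the second identity uses $\overrightarrow{A_jA_n}=\overrightarrow{A_jA_k}+\overrightarrow{A_kA_n}$ together with $[\overrightarrow{A_jA_k},\overrightarrow{A_jA_k}]=0$. Writing $D_{i_a(j,k)}=\{p_a,q_a\}$ with $p_a<q_a$, the path from $A_j$ to $A_k$ traverses $D_{i_a(j,k)}$ in one of two directions, contributing $\pm v_{i_a(j,k)}$; hence $\overrightarrow{A_jA_k}=\sum_a \sigma_a^{(jk)} v_{i_a(j,k)}$ for signs $\sigma_a^{(jk)}\in\{\pm 1\}$, and similarly $\overrightarrow{A_kA_n}=\sum_b \sigma_b^{(kn)} v_{i_b(k,n)}$. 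The technical heart of the argument is to verify the clean sign formula $\sigma_a^{(jk)}=(-1)^a\varkappa_j$ and $\sigma_b^{(kn)}=(-1)^b\varkappa_k$.

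The sign formula rests on the following structural dichotomy: at every vertex $v$ of the spanning tree, either all tree edges at $v$ connect $v$ to larger-index vertices, or all connect it to smaller-index ones; moreover, the former holds iff $v<c$ and the latter iff $v\ge c$. The dichotomy furnishes both ingredients needed: the signs must alternate at every internal vertex of the path (at such a vertex $v$ we depart against the orientation of one tree edge and arrive with the orientation of the other, or vice versa), and the initial sign is $\sigma_1^{(jk)}=-\varkappa_j$ (positive if $j$ is ``small'', i.e.\ $j<c$; negative otherwise). Combined, these give $\sigma_a^{(jk)}=(-1)^a\varkappa_j$ by induction on~$a$.

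To prove the dichotomy, the key step is to show that no diagonal of $G$ lies entirely within $L=\{1,\dots,c-1\}$ or entirely within $U=\{c+1,\dots,n-1\}$. A hypothetical diagonal $\{v,w\}\subseteq L$ with $v+1<w$ would separate the nonempty arc $\{v+1,\dots,w-1\}\subseteq L$ from $U$; but since $G$ is trimmed with respect to~$\{c,n\}$, each vertex in that arc must be an endpoint of some diagonal crossing $\{c,n\}$ and reaching~$U$, and such a diagonal would necessarily cross $\{v,w\}$, violating planarity of the triangulation. A symmetric argument excludes diagonals within~$U$. Hence every diagonal of $G$ connects $L$ to $U$, and combined with the polygon-side tree edges $D_2=\{c-1,c\}$ and $D_n=\{1,n\}$ (whose endpoints match the stated classification), the dichotomy holds at every tree vertex.

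Once the sign formula is in place, the identities follow by bilinear expansion:
\begin{align*}
x_{jk}&=\sum_{a,b}\sigma_a^{(jk)}\sigma_b^{(jk)}\langle v_{i_a(j,k)},v_{i_b(j,k)}\rangle=\varkappa_j^2\sum_{a,b}(-1)^{a+b}\langle v_{i_a(j,k)},v_{i_b(j,k)}\rangle,\\
S_{jkn}&=2\sum_{a,b}\sigma_a^{(jk)}\sigma_b^{(kn)}[v_{i_a(j,k)},v_{i_b(k,n)}]=2\varkappa_j\varkappa_k\sum_{a,b}(-1)^{a+b}[v_{i_a(j,k)},v_{i_b(k,n)}],
\end{align*}
and $\varkappa_j^2=1$ recovers~\eqref{eq:xjk-laurent-formulas}. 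I expect the main obstacle to be the structural dichotomy, whose proof hinges on a careful invocation of trimming to exclude ``same-side'' diagonals; the remainder is mechanical.
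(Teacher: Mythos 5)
Your proof takes essentially the same route as the paper's: telescope $\overrightarrow{A_jA_k}$ and $\overrightarrow{A_kA_n}$ along the tree paths and expand bilinearly. The paper merely states the sign identity $\overrightarrow{A_jA_k}=\varkappa_j\sum_a(-1)^a v_{i_a(j,k)}$ as an ``observation,'' whereas you actually derive it from the $L$/$U$ dichotomy, and your trimming argument excluding same-side diagonals is correct. One point you (and the paper) leave implicit: your alternation argument produces the signs of the edges in the order they are traversed along the path, while the proposition indexes them by increasing edge index $i_1\le\cdots\le i_{\ell}$; these two orders can be opposite (e.g.\ when $j<k$ both lie in $\{1,\dots,c-1\}$ the path runs against the snake ordering of the $D_t$), which is harmless for \eqref{eq:xjk-laurent-formulas} and for odd-length paths, but needs to be reconciled when invoking the sign formula for an even-length $(j,k)$-path in \eqref{eq:Sjkn-laurent-formulas}.
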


\begin{proof}
Observe that
$
\overrightarrow{A_jA_k} = \varkappa_j \sum_{a=1}^{\ell(j,k)}(-1)^av_{i_a(j,k)}$.
Therefore
\begin{align*}
x_{jk} &= \langle \overrightarrow{A_jA_k},\overrightarrow{A_jA_k}\rangle = \biggl\langle\, \sum_{a = 1}^{\ell(j,k)} (-1)^av_{i_a(j,k)},\sum_{b = 1}^{\ell(j,k)} (-1)^b v_{i_b(j,k)}\biggr\rangle,\\
S_{jkn} &= 2\,[\overrightarrow{A_jA_k},\overrightarrow{A_kA_n}] 
= 2\,\varkappa_j \varkappa_k \biggl[\,\sum_{a=1}^{\ell(j,k)} (-1)^av_{i_a(j,k)}, \sum_{b = 1}^{\ell(k,n)} (-1)^b v_{i_b(k,n)}\biggr].
\end{align*}
Now the bilinearity of the forms $\langle\cdot,\cdot\rangle$ and $[\cdot,\cdot]$ implies \eqref{eq:xjk-laurent-formulas}--\eqref{eq:Sjkn-laurent-formulas}. 
\end{proof}

\begin{thm}
\label{thm:laurent-thin}
Let $(P,G)$ be a triangulated polygon in the plane~$\AAA$. Every measurement $x_{ij}$ (resp.,~$S_{ijk}$) in $\xS(P)$ can be expressed as a Laurent polynomial with integer coefficients in the measurements in~$\xS_G(P)$, with the denominator equal to the product of the measurements $x_{ab}$ corresponding to the diagonals $\{a,b\}\in D(G)$ which cross $\{i,j\}$ (resp.~$(i,j,k)$). 
\end{thm}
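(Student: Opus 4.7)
My plan is to reduce to the thin trimmed setting of Definition~\ref{defn:G-P-thin}, apply the explicit Laurent formulas of Propositions~\ref{prop:brackets-thin} and~\ref{prop:xjk-Sijk-Laurent-thin}, and then identify the resulting denominator with the product claimed in the statement.

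The reduction step is handled by Remark~\ref{rem:reduce-to-trim}: after trimming~$G$ with respect to the relevant measurement (and, in the signed area case, possibly invoking the additive identity~\eqref{eq:split-ijkl} together with induction on~$n$ before applying Lemma~\ref{lem:trim-triple}), I~may assume that $G$ is thin and trimmed with respect to~$\{i,j\}$ itself, or with respect to one of the sides of the triangle $(i,j,k)$. A~cyclic relabelling of vertices then puts me in the framework of Definition~\ref{defn:G-P-thin}, where $G$ is trimmed with respect to a distinguished diagonal $\{c,n\}$ and has spanning tree edges $D_2,\ldots,D_n$ whose interior members $D_3,\ldots,D_{n-1}$ are precisely the diagonals of~$G$. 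It~suffices to treat the measurements $x_{jk}$ and~$S_{jkn}$ arising in this normal form.

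I then invoke Proposition~\ref{prop:xjk-Sijk-Laurent-thin}, which writes each such $x_{jk}$ and $S_{jkn}$ as a $\ZZ$-linear combination of the bilinear brackets $\langle v_{i_a},v_{i_b}\rangle$ and $[v_{i_a},v_{i_b}]$ indexed by pairs of edges on the spanning-tree path joining the two relevant vertices. Proposition~\ref{prop:brackets-thin} in turn expresses every such bracket as a Laurent polynomial in $\xS_G(P)$ with integer coefficients, whose denominator is a square-free product of squared edge lengths $\langle v_m,v_m\rangle = x_{D_m}$ for the intermediate tree edges~$D_m$. Clearing denominators and summing produces a single Laurent polynomial with integer coefficients and denominator a square-free monomial supported on~$\xx_{D(G)}(P)$.

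The main obstacle, and the most delicate part of the argument, is to identify this denominator with exactly the product of $x_{ab}$ over diagonals $\{a,b\}\in D(G)$ that cross $\{i,j\}$ (respectively, that cross at least one side of~$(i,j,k)$). The ``divides'' direction should be a combinatorial fact about thin trimmed triangulations: a tree edge $D_m$ occurring strictly between two tree edges of the spanning path from $j$ to~$k$ must be a diagonal of~$G$ and must cross~$\{j,k\}$, since the tree threads through the ear decomposition of the trimmed~$G$. The reverse inclusion, namely that every crossing diagonal actually survives in the denominator without spurious cancellation in the numerator, I~would establish by a generic-specialization argument: any hidden cancellation would yield an algebraic relation among the measurements $\xS_G(P)$ on a proper sub-polygon, contradicting the essentially free $\textup{Aut}(\AAA)$-action guaranteed by Corollary~\ref{cor:triangulation-recovery}. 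Together, these two ingredients close out the theorem.
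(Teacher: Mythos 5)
Your proposal follows the paper's own proof of Theorem~\ref{thm:laurent-thin} step for step: reduce to the thin trimmed case via Remark~\ref{rem:reduce-to-trim} and Lemma~\ref{lem:trim-triple}, then substitute the bracket formulas of Proposition~\ref{prop:brackets-thin} into Proposition~\ref{prop:xjk-Sijk-Laurent-thin}, with Lemma~\ref{lem:STsigma} supplying integrality. The only place you diverge is the final paragraph, and there you are both solving a problem the theorem does not pose and reaching for a tool that would not solve it. The statement asserts only that each measurement \emph{can be expressed} with the indicated denominator, not that the resulting fraction is in lowest terms; so your ``reverse inclusion'' (no spurious cancellation) is not required, and it is fortunate that it is not, because the generic-specialization argument you sketch would not go through as stated: the measurements in $\xS_G(P)$ are \emph{not} algebraically independent (the introduction stresses that the $3n-5$ initial measurements satisfy relations), so a hidden cancellation identity among them contradicts nothing in Corollary~\ref{cor:triangulation-recovery}. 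As for your ``divides'' direction, it is far less delicate than you suggest: once $G$ is trimmed with respect to $\{i,j\}$ (or, after Lemma~\ref{lem:trim-triple}, with respect to one side of $(i,j,k)$), \emph{every} diagonal of $G$ crosses the measurement in question---in a thin triangulation each diagonal separates the two ear-tip vertices of Definition~\ref{defn:thin-triangulation}---so the square-free product of intermediate edge lengths coming out of Proposition~\ref{prop:brackets-thin} is automatically supported on crossing diagonals, and the denominator can be padded up to the full product if need be. With that last paragraph either deleted or replaced by this one-line observation, your argument is exactly the paper's.
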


\begin{proof}
In the case when triangulation~$G$ is thin, and trimmed with respect to the measurement in question, the statement follows by substituting the formulas in Proposition~\ref{prop:brackets-thin} into the ones given in Proposition~\ref{prop:xjk-Sijk-Laurent-thin}, and recalling Lemma~\ref{lem:STsigma}. 
The general case follows by Remark~\ref{rem:reduce-to-trim} (based on Lemma~\ref{lem:trim-triple}). 
As noted in the latter remark, 
the final formulas for the signed areas $S_{ijk}$ are obtained by using the additive relations~\eqref{eq:split-ijkl} to 
combine several expressions resulting from \eqref{eq:Sjkn-laurent-formulas} and \eqref{eq:square-bracket-formulas}. 
\end{proof}

\begin{rem}
For the readers interested in the computational aspects of these problems, we note that the above formulas lead to polynomial-complexity algorithms: although the sums in~\eqref{eq:angle-bracket-formulas}--\eqref{eq:square-bracket-formulas} may contain exponentially many terms as $n\to\infty$, they can be computed very fast via the product formula~\eqref{eq:prod(Tk+eps-Sk):1-thin}.
\end{rem}

We conclude this section by providing an explicit version of Theorem~\ref{thm:laurentness-full} for the ``fan'' triangulation in which all diagonals are incident to a single vertex. 

\begin{defn}
\label{defn:G1}
Let $G_1$ be the triangulated $n$-cycle with vertices $1,2,\dots,n$, sides $\{1,2\},\dots,\{n-1,n\},\{n,1\}$, and diagonals $\{1,3\}, \dots, \{1,n-1\}$.
Let $P\!=\!(A_1,\dots,A_n)$ be a polygon on the plane~$\AAA$. 
We continue to use the notation~\eqref{eq:xij(P)}--\eqref{eq:Sijk(P)}. 
For $1<a<b\le n$ and $J\subset\{a,\dots,b-1\}$, we denote
\begin{alignat}{3}
\label{eq:QJab}
Q_{J,[a,b]} &= \Bigl(\,\prod_{j \in J} S_{1,j,j+1}\Bigr)\Bigl(\,\prod_{j \in \{a,\dots,b-1\} \setminus J} (x_{1j}+x_{1,j+1}-x_{j,j+1}) \Bigr). 
\end{alignat}
\end{defn}

\begin{cor}
\label{cor:laurent-fan}
Each measurement in $\xS(P)$ can be explicitly expressed as a Laurent polynomial in the measurements in~$\xS_{G_1}(P)$ (cf.\ Definition~\ref{defn:G1}), as follows. For $1<a<b\le n$, we have: 
\begin{align}
\label{eq:xij-no-angle-brackets}
x_{ab}&=x_{1a}+x_{1b} -  \B(2^{b-a-1}\prod_{k=a+1}^{b-1}x_{1k}\B)^{\!\!-1} \sum_{\substack{J\subset\{a,\dots,b-1\}\\ \textup{$|J|$ even}}} (-1)^{|J|/2} \,Q_{J,[a,b]} 
,\\
\label{eq:S1ij}
S_{1ab}&= \B(2^{b-a-1}\prod_{k=a+1}^{b-1}x_{1k}\B)^{-1} 
\sum_{\substack{J\subset\{a,\dots,b-1\}\\ \textup{$|J|$ odd}}} (-1)^{(|J|-1)/2} \,Q_{J,[a,b]} 
.\\
\intertext{Also, for $1<a<b<c\le n$, we have:}
\label{eq:Sijk=S1ij+...}
S_{abc}&=S_{1ab}+S_{1bc}-S_{1ac} . 
\end{align}
\end{cor}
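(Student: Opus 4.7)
The plan is to specialize the general machinery of Propositions~\ref{prop:brackets-thin} and~\ref{prop:xjk-Sijk-Laurent-thin} to the fan triangulation $G_1$, where everything collapses to particularly clean formulas because the spanning tree in Definition~\ref{defn:G-P-thin} is a star. First I would verify that $G_1$ is thin and trimmed with respect to the diagonal $\{2,n\}$ (the two non-diagonal-incident vertices are $2$ and $n$), so the setup of Definition~\ref{defn:G-P-thin} applies. Orienting the star so that $D_2=\{1,2\}$, $D_k=\{1,k\}$ for $3\le k\le n-1$, and $D_n=\{1,n\}$, one checks that consecutive $D_j,D_{j+1}$ share the triangle $(1,j,j+1)$. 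Hence $v_k=\overrightarrow{A_1A_k}$ for $2\le k\le n$, so that $\langle v_k,v_k\rangle=x_{1k}$, while
\[
S_j=2[v_j,v_{j+1}]=S_{1,j,j+1},\qquad T_j=2\langle v_j,v_{j+1}\rangle=x_{1j}+x_{1,j+1}-x_{j,j+1}.
\]
The second identity comes from expanding $\langle v_j-v_{j+1},v_j-v_{j+1}\rangle=x_{j,j+1}$. Consequently the summand in $\Sigma_{\textup{even}}(a,b)$ (resp.~$\Sigma_{\textup{odd}}(a,b)$) indexed by $J$ is exactly $(-1)^{|J|/2}Q_{J,[a,b]}$ (resp.~$(-1)^{(|J|-1)/2}Q_{J,[a,b]}$), using the $Q_{J,[a,b]}$ of \eqref{eq:QJab}.

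Next I would derive \eqref{eq:S1ij} directly. By definition, $S_{1ab}=2[v_a,v_b]$, so formula~\eqref{eq:square-bracket-formulas} gives
\[
S_{1ab}=2\cdot 2^{a-b}\,\Sigma_{\textup{odd}}(a,b)\prod_{m=a+1}^{b-1}x_{1m}^{-1}=\B(2^{b-a-1}\prod_{m=a+1}^{b-1}x_{1m}\B)^{\!\!-1}\!\!\!\sum_{\substack{J\subset\{a,\ldots,b-1\}\\|J|\,\textup{odd}}}(-1)^{(|J|-1)/2}Q_{J,[a,b]},
\]
which is \eqref{eq:S1ij}. For \eqref{eq:xij-no-angle-brackets}, specializing Proposition~\ref{prop:xjk-Sijk-Laurent-thin} to $1<a<b\le n$: the path in the star from $a$ to $b$ has length $2$ and traverses $D_a,D_b$, so \eqref{eq:xjk-laurent-formulas} reduces to
\[
x_{ab}=\langle v_a,v_a\rangle+\langle v_b,v_b\rangle-2\langle v_a,v_b\rangle = x_{1a}+x_{1b}-2\langle v_a,v_b\rangle.
\]
Now \eqref{eq:angle-bracket-formulas} together with the identification of $\Sigma_{\textup{even}}(a,b)$ with $\sum_{|J|\textup{ even}}(-1)^{|J|/2}Q_{J,[a,b]}$ yields \eqref{eq:xij-no-angle-brackets}.

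Finally, \eqref{eq:Sijk=S1ij+...} is just the additive identity \eqref{eq:split-ijkl} applied to the quadruple $(1,a,b,c)$: one obtains $S_{1ab}+S_{1bc}=S_{1ac}+S_{abc}$, and rearranging gives the claim. Substituting the Laurent expressions for $S_{1ab}$, $S_{1bc}$, $S_{1ac}$ already established then expresses $S_{abc}$ in the desired form.

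The argument is essentially a bookkeeping exercise, and the only point that requires genuine care is checking that the sign conventions and the powers of $2$ in \eqref{eq:angle-bracket-formulas}--\eqref{eq:square-bracket-formulas} line up correctly with the $Q_{J,[a,b]}$ notation; once the star structure of the spanning tree is in hand, the rest is a direct substitution, and no further geometric input is needed beyond what has already been established.
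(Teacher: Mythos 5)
Your proposal is correct and follows essentially the same route as the paper: both specialize Propositions~\ref{prop:brackets-thin} and~\ref{prop:xjk-Sijk-Laurent-thin} to the fan, whose spanning tree is the star at vertex~$1$, and use the additive identity~\eqref{eq:split-ijkl} for~\eqref{eq:Sijk=S1ij+...}. The paper's proof is just a terser version of yours (it routes $x_{ab}$ through the trimming of $G_1$ with respect to $\{a,b\}$ rather than working with $G_1$ itself, but this is an immaterial difference), and your explicit identification $v_k=\overrightarrow{A_1A_k}$, $S_j=S_{1,j,j+1}$, $T_j=x_{1j}+x_{1,j+1}-x_{j,j+1}$ supplies exactly the bookkeeping the paper leaves to the reader.
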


\begin{proof}
Formula \eqref{eq:Sijk=S1ij+...} is clear. Formula \eqref{eq:xij-no-angle-brackets} is a special case of \eqref{eq:xjk-laurent-formulas} (with the substitutions~\eqref{eq:angle-bracket-formulas}), applied to the trimming of~$G$ with respect to the diagonal~$\{a,b\}$. Similarly, formula \eqref{eq:S1ij} is a special case of \eqref{eq:Sjkn-laurent-formulas}, with the substitutions~\eqref{eq:square-bracket-formulas}. 
\end{proof}

\pagebreak[3]

\section{\CM friezes}
\label{sec:cayley-menger}

Recall the following classical result; 
see, e.g., \cite[Section~9.7.3]{berger-geometry-v1} or \cite[Section~40]{blumenthal}.

\begin{prop}
\label{prop:cayley-menger-plane}
Let $(A_1,A_2,A_3,A_4)$ be a quadrilateral in~$\AAA$ with measurements 
\begin{equation}
\label{eq:abcdef-1234}
a=x_{14}\,,\quad 
b=x_{12}\,,\quad 
c=x_{23}\,,\quad 
d=x_{34}\,, \quad
e=x_{13}\,,\quad
f=x_{24}
\end{equation}
(squared distances between all pairs of vertices), as in Example~\ref{example:quadrilateral}. 
Then 
\begin{equation}
\label{eq:cayley-menger}
M(a,b,c,d,e,f)\stackrel{\rm def}{=}
\det\begin{bmatrix} 0 & 1 & 1 & 1 & 1 \\ 1 & 0 & b & e & a \\ 1 & b & 0 & c & f \\ 1 & e & c & 0 & d \\ 1 & a & f & d & 0 \end{bmatrix}=0. 
\end{equation}
\end{prop}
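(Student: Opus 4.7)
The plan is to derive the Cayley-Menger identity~\eqref{eq:cayley-menger} from the observation that three vectors lying in the two-dimensional space $\VVV$ have a vanishing $3\times 3$ Gram determinant. Because the four points $A_1,A_2,A_3,A_4$ live in the plane~$\AAA$, the three displacement vectors emanating from $A_1$ all lie in~$\VVV$, and this is the ultimate source of the algebraic relation being established.

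First, I would pick $A_1$ as origin and set $v_i = \overrightarrow{A_1 A_i}$ for $i\in\{2,3,4\}$. By the polarization identity,
\[
\langle v_i,v_j\rangle = \tfrac{1}{2}(x_{1i}+x_{1j}-x_{ij}) \qquad (i,j\in\{2,3,4\}),
\]
(with the convention $x_{ii}=0$), so each of the six squared distances $a,b,c,d,e,f$ from~\eqref{eq:abcdef-1234} is affinely determined by the entries of the $3\times 3$ Gram matrix $G=(\langle v_i,v_j\rangle)_{2\le i,j\le 4}$.

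Second, I would perform row and column operations on the Cayley-Menger matrix in~\eqref{eq:cayley-menger} that preserve its determinant: subtract the row indexed by $A_1$ from each of the rows indexed by $A_2,A_3,A_4$, then do the analogous operation on the columns. After these operations, every interior entry at the row of $A_i$ and column of $A_j$ (for $i,j\in\{2,3,4\}$) becomes $x_{ij}-x_{1i}-x_{1j} = -2\langle v_i,v_j\rangle$, so the $3\times 3$ block indexed by $A_2,A_3,A_4$ becomes exactly $-2G$; meanwhile, the border row and column of ones collapse, each to a single nonzero entry. Expanding the resulting determinant twice, once along the border row and once along the border column, yields
\[
M(a,b,c,d,e,f) = \pm 8\,\det G,
\]
where only the nonvanishing of the scalar matters for what follows.

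Third, since $v_2,v_3,v_4$ are three vectors in the two-dimensional space $\VVV$, they are linearly dependent; consequently, their Gram matrix $G$ is singular, so $\det G = 0$ and hence $M(a,b,c,d,e,f)=0$. The conceptual content lives in Steps~1 and~3, where the argument is really just the standard derivation of the Cayley-Menger determinant as a multiple of a Gram determinant. The main obstacle, and the most error-prone portion, is the bookkeeping in Step~2: one has to track the signs arising from the two successive minor expansions and carefully verify that the border row and column do collapse to a single nonzero entry after the subtractions.
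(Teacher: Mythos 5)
Your proof is correct and complete. The paper does not actually prove this proposition---it records it as a classical fact, deferring to Berger and Blumenthal---and your Gram-determinant argument is precisely the standard derivation found in those references: the row/column reductions do turn the inner $3\times 3$ block into $-2G$ and do collapse the bordering row and column of ones as you describe, so $M(a,b,c,d,e,f)=\pm 8\det G$ with $\det G=0$ forced by the linear dependence of three vectors in the two-dimensional space~$\VVV$. One point worth making explicit in this setting is that $\langle\cdot,\cdot\rangle$ is a symmetric bilinear form on $\CC^2$ rather than a genuine inner product, but your Step~3 is unaffected: a dependence $\sum_i c_i v_i=0$ gives $Gc=0$ directly, so $\det G=0$ without any positivity or nondegeneracy assumption.
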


The determinant $M(a,b,c,d,e,f)$ appearing in~\eqref{eq:cayley-menger} is called the \emph{Cayley-Menger determinant}. It is a homogenous polynomial of degree~$3$ in the variables $a,b,c,d,e,f$. Viewed as a polynomial in each individual variable, it has degree~$2$. For each $5$-tuple of numbers $a,b,c,d,e$, there are ordinarily two values of~$f$ such that \eqref{eq:cayley-menger} holds. Informally speaking, a configuration of four points in the plane~$\AAA$ is ``almost'' determined by five of the six squared lengths between these points, up to a binary~choice. 


\begin{defn}
\label{defn:CM-diamond}
A \emph{\CM diamond} is a 6-tuple $(a,b,c,d,e,f)$ of complex numbers satisfying~\eqref{eq:cayley-menger}. As with Heronian diamonds, we typically arrange these six numbers in a diamond pattern, as shown in Figure~\ref{fig:CM-diamond}.
Proposition~\ref{prop:cayley-menger-plane} can be restated as saying that for any plane quadrilateral $(A_1,A_2,A_3,A_4)$, the six squared distances listed in~\eqref{eq:abcdef-1234} form a \CM diamond. Cf.\ Remark~\ref{rem:heronian-diamond}. 
\end{defn}

\begin{figure}[ht]
\begin{center}
\vspace{-.1in}
\includegraphics[scale=0.7]{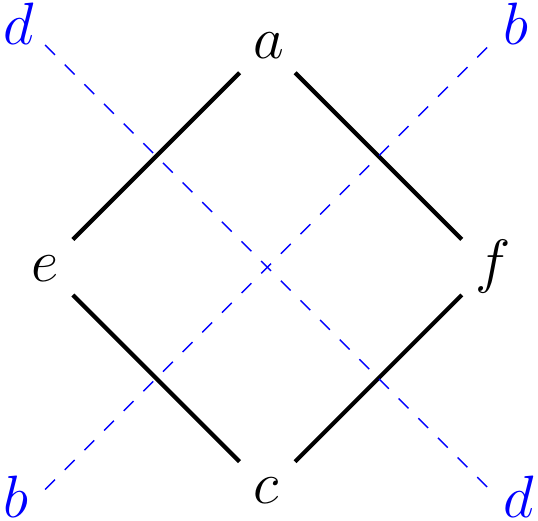}
\end{center}
\vspace{-.1in}
\caption{A \CM diamond, cf.\ Definition~\ref{defn:CM-diamond}.}
\label{fig:CM-diamond}
\end{figure}

\vspace{-.2in}

\begin{rem}
\label{rem:CM-dihedral}
Thanks to the symmetries of the \CM determinant, the notion of a \CM diamond is invariant under the dihedral symmetries of the underlying square pattern. Thus, if $(a,b,c,d,e,f)$ is a \CM diamond, then so are $(c,d,a,b,e,f)$, $(a,d,c,b,f,e)$, $(f,d,e,b,a,c)$, etc. 
\end{rem}



\begin{defn}
Let $n \geq 4$ be an integer. A \emph{\CM frieze} $\mathbf{z} = (z_\alpha)_{\alpha \in \InCM}$ is an array of complex numbers indexed by the set 
\[
\InCM = \{(i,j)\in\ZZ^2 : 0\le j-i\le n\} \cup L_n
\]
(see \eqref{eq:Ln}) in which, for every $(i,j) \in \ZZ^2$ satisfying~$1 \leq j-i \leq n-1$, the 6-tuple 
\begin{equation}
\label{eq:xdiamond}
\xdiamond_{i,j}(\mathbf{z})
\stackrel{\rm def}{=}(z_{(i,j+1)}, z_{(i+\frac12,\smallneline)}, z_{(i+1,j)}, z_{(\smallseline,j+\frac12)}, z_{(i,j)}, z_{(i+1,j+1)})
\end{equation}
(see Figure~\ref{fig:CM-frieze-blowup}) forms a \CM diamond. 
In other words, we require that 
\[
M(\xdiamond_{i,j}(\mathbf{z}))=0 \quad (1 \leq j-i \leq n-1). 
\]
In addition, we impose the following boundary conditions (cf.~\eqref{eq:partial-diamond-1}--\eqref{eq:partial-diamond-2}): 
\begin{alignat}{5}
\label{eq:cm-boundary-1}
z_{(i,i)}=0, \quad &&z_{(i,i+1)} &= z_{(i+\frac12,\smallneline)} = z_{(\smallseline,i+\frac12)}\quad &&(i \in \ZZ),\\
\label{eq:cm-boundary-2}
z_{(i,i+n)}=0, \quad && z_{(i,i+n-1)} &= z_{(i-\frac12,\smallneline)} = z_{(\smallseline, i+n-\frac12)}\quad &&(i \in \ZZ). 
\end{alignat}
An example of a \CM frieze is shown in Figure~\ref{fig:coherent-frieze-example}. 
\end{defn}

\begin{figure}[ht]
\begin{center}
\vspace{-.1in}
\includegraphics[scale=0.6]{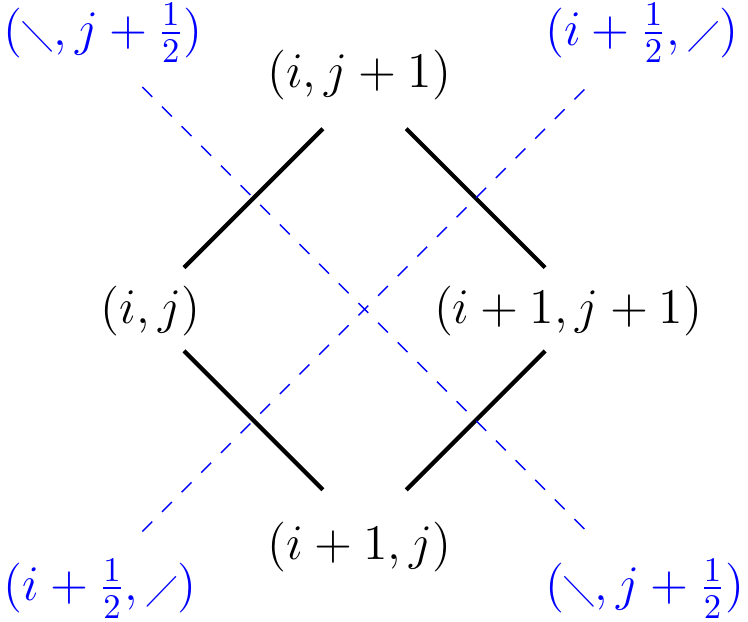}
\vspace{-.2in}
\end{center}
\caption{Indexing set for a diamond in a \CM frieze. 
}
\label{fig:CM-frieze-blowup}
\end{figure}
\vspace{-.2in}

\begin{figure}[ht]
\begin{center}
\vspace{-.1in}
\includegraphics[scale=0.65]{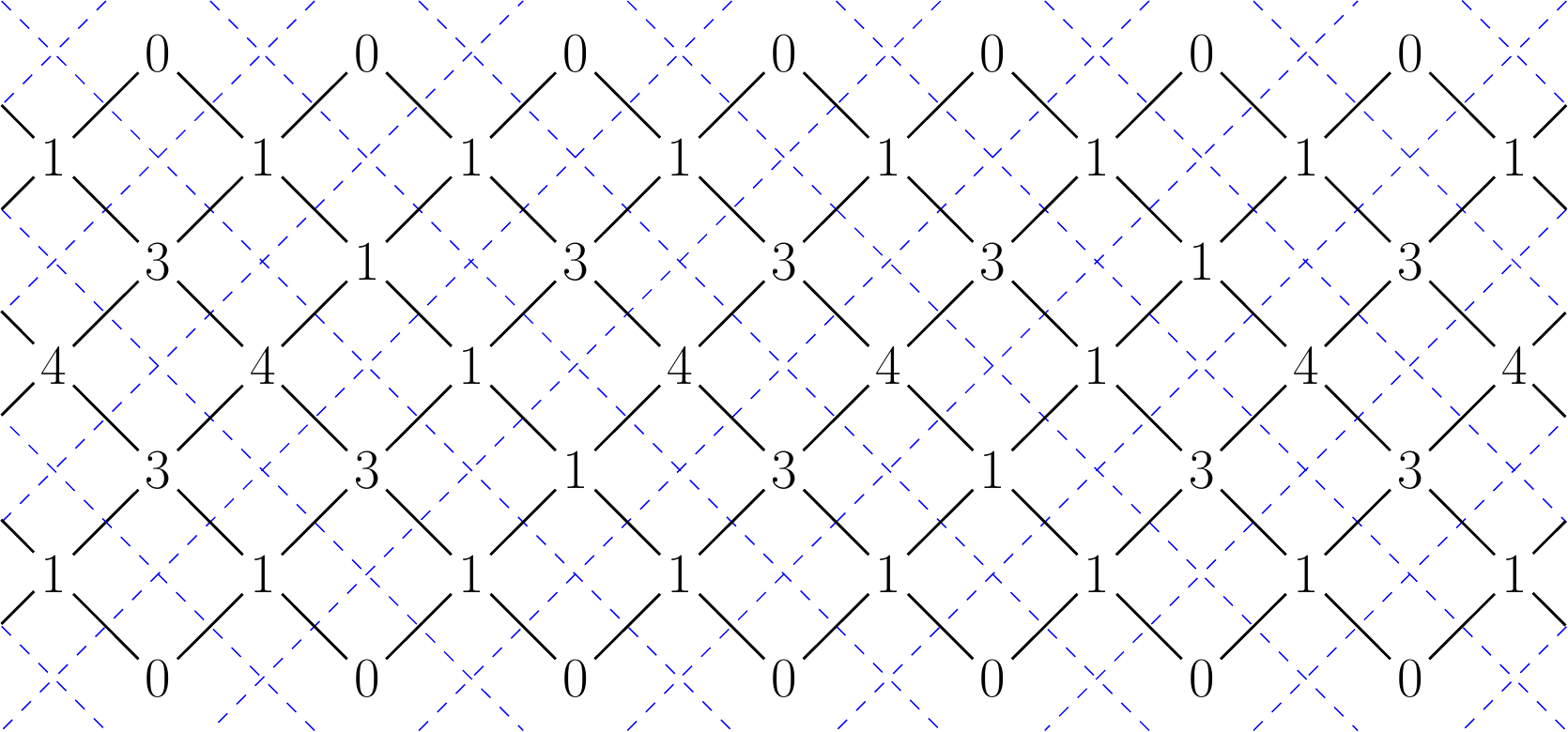}
\end{center}
\caption{A \CM frieze $\mathbf{z}$ of order~6. The entries associated with the dashed lines (which match the entries in the top and bottom nonzero rows) are all equal to~1. The leftmost entries in each row, starting from the bottom, are indexed by $(4,4)$, $(3,4)$, $(3,5)$, $(2,5)$, $(2,6)$, $(1,6)$, and $(1,7)$, respectively.}
\label{fig:coherent-frieze-example}
\end{figure}

\begin{defn}
\label{defn:CM-Frieze-from-P}
By Proposition~\ref{prop:cayley-menger-plane}, any $n$-gon $P$ in the plane $\AAA$ gives rise to a \CM frieze ${\mathbf{z}} = {\mathbf{z}}_{\textup{CM}}(P)$ of order $n$ by setting
\begin{align}
z_{(i,j)} &= x_{\langle i\rangle\langle j\rangle},\\
z_{(i+\frac12,\smallneline)} &= x_{\langle i\rangle\langle i+1\rangle},\\
z_{(\smallseline, j+\frac12)} &= x_{\langle j\rangle\langle j+1\rangle},
\end{align}
where $\langle m\rangle$ denotes the unique integer in $\{1,\dots,n\}$ satisfying $m\equiv\langle m\rangle\pmod n$. The boundary conditions \eqref{eq:cm-boundary-1}--\eqref{eq:cm-boundary-2} are easily checked, using the fact that $x_{ii} = 0$ for all $i \in \{1,\dots,n\}$. 

An example is shown in Figure~\ref{fig:nonconvex-hexagon}. 
\end{defn}

\begin{figure}[ht]
\begin{center}
\vspace{-.05in}
\setlength{\unitlength}{1.5pt}
\begin{picture}(56,52)(0,0)
\thicklines
\multiput(15,0)(0,52){2}{\line(1,0){26}}
\multiput(0,26)(26,0){2}{\line(15,26){15}}
\multiput(0,26)(26,0){2}{\line(15,-26){15}}

\multiput(15,0)(0,52){2}{\circle*{1.5}}
\multiput(41,0)(0,52){2}{\circle*{1.5}}
\multiput(0,26)(26,0){2}{\circle*{1.5}}

\put(33,26){\makebox(0,0){$A_1$}}
\put(46,0){\makebox(0,0){$A_2$}}
\put(46,52){\makebox(0,0){$A_6$}}
\put(9,0){\makebox(0,0){$A_3$}}
\put(8.5,52){\makebox(0,0){$A_5$}}
\put(-6,26){\makebox(0,0){$A_4$}}

\end{picture}
\end{center}
\caption{A hexagon $P=(A_1,\dots,A_6)$ giving rise to the frieze in Figure~\ref{fig:coherent-frieze-example}. 
Each side of the hexagon is of length~1, as are the diagonals $A_1A_3$, $A_1A_4$,  $A_1A_5$. 
The measurements $x_{44}=0$, $x_{34}=1$, $x_{35}=3$, $x_{25}=4$, $x_{26}=3$, $x_{16}=1$, $x_{11}=0$ match the leftmost entries in each row of the frieze. 
}
\label{fig:nonconvex-hexagon}
\end{figure}
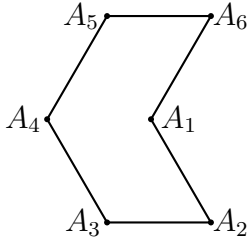
\vspace{-.2in}

\begin{definition}
It will be helpful to introduce some nonconventional (but suggestive) notation for the partial derivatives of the \CM polynomial $M=M(a,b,c,d,e,f)$ with respect to its 6~arguments. This notation makes reference to the placement of these arguments in the diamond, cf.\ Figure~\ref{fig:CM-diamond}. We denote 
\begin{alignat}{7}
\notag
&&\partial_\uparrow M &=\frac{\partial M}{\partial a} \\[-2pt]
\label{eq:partial-arrows}
\partial_\leftarrow M =\frac{\partial M}{\partial e} \quad&&& && \quad \partial_\rightarrow M =\frac{\partial M}{\partial f} \\[-2pt]
\notag
&& \partial_\downarrow M &=\frac{\partial M}{\partial c} \\[-2pt]
\label{eq:partial-slanted}
\partial_\smallneline M =\frac{\partial M}{\partial b}  \qquad\qquad\qquad &&&&& \qquad \qquad \qquad
\partial_\smallseline M =\frac{\partial M}{\partial d}.
\end{alignat}
In particular, 
\begin{align*}
\partial_\rightarrow M(a,b,c,d,e,f)
&=\frac{\partial M}{\partial f} (a,b,c,d,e,f) \\[-1pt]
&=2(-ab + ac + bd - cd + ae + be + ce + de - e^2 - 2ef) \\
&=2Q(a,b,c,d,e,f), 
\end{align*}
where
\begin{equation}
\label{eq:Q(abcdef)}
Q(a,b,c,d,e,f)=(a-d)(c-b)+e(a+b+c+d-e-2f). 
\end{equation}
The other five partial derivatives of~$M$ are obtained by evaluating $2Q$ at the appropriate permutations of $(a,b,c,d,e,f)$. 
(Incidentally, some permutations leave $Q$ invariant: $Q(a,b,c,d,e,f) = Q(c,d,a,b,e,f)=Q(b,a,d,c,e,f)=Q(d,c,b,a,e,f)$.)  
\end{definition}


When $a,b,c,d,e,f$ satisfy equation \eqref{eq:cayley-menger}, there are alternative formulas for the squared partial derivatives of~$M$, see Lemma~\ref{lem:noncoh-discriminant} below. Put in a different way, the formulas in Lemma~\ref{lem:noncoh-discriminant} hold modulo~$M$. 

\begin{lem}
\label{lem:noncoh-discriminant}
For a Cayley-Menger diamond $(a,b,c,d,e,f)$, we have: 
\begin{align}
\label{eq:noncoh-f}
(\partial_\rightarrow M(a,b,c,d,e,f))^2 &= 4H(b,c,e)H(a,d,e),\\
\label{eq:noncoh-a}
(\partial_\uparrow M(a,b,c,d,e,f))^2 &= 4H(b,c,e)H(c,d,f),\\
\label{eq:noncoh-c}
(\partial_\downarrow M(a,b,c,d,e,f))^2 &= 4H(a,b,f)H(a,d,e),\\
\label{eq:noncoh-e}
(\partial_\leftarrow M(a,b,c,d,e,f))^2 &= 4H(a,b,f)H(c,d,f).
\end{align}
\end{lem}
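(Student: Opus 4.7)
The plan is to prove \eqref{eq:noncoh-f} via a discriminant computation, then derive \eqref{eq:noncoh-a}, \eqref{eq:noncoh-c}, and \eqref{eq:noncoh-e} from the dihedral symmetries of the \CM diamond recorded in Remark~\ref{rem:CM-dihedral}. Viewing $M(a,b,c,d,e,f)$ as a quadratic in~$f$, write $M = Af^2 + Bf + C$ with $A, B, C \in \CC[a,b,c,d,e]$, so that $\partial_\rightarrow M = 2Af + B$. Using $Af^2 \equiv -Bf - C \pmod{M}$,
\[
(\partial_\rightarrow M)^2 = 4A^2f^2 + 4ABf + B^2 \equiv -4A(Bf + C) + 4ABf + B^2 = B^2 - 4AC \pmod{M},
\]
so \eqref{eq:noncoh-f} reduces to the polynomial identity $B^2 - 4AC = 4\,H(b,c,e)\,H(a,d,e)$ in $\CC[a,b,c,d,e]$.

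First I would read off $A$ via Laplace expansion: the only contributions to $f^2$ in $\det M$ come from selecting $f$ at both positions $(3,5)$ and $(5,3)$, which (with the sign of the transposition $(3\;5)$) yield $-\det\!\left[\begin{smallmatrix} 0 & 1 & 1 \\ 1 & 0 & e \\ 1 & e & 0\end{smallmatrix}\right] = -2e$. Thus $A = -2e$. To pin down $B^2 - 4AC$, I would combine Vieta's formulas with Bretschneider's formula~\eqref{eq:bretschneider}: for generic $(a,b,c,d,e)$ with $e \neq 0$, Lemma~\ref{lem:recover-triangle} produces a plane quadrilateral $A_1 A_2 A_3 A_4$ realizing these five squared distances, with auxiliary signed areas $p, q$ satisfying $p^2 = H(b,c,e)$ and $q^2 = H(a,d,e)$. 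Proposition~\ref{prop:identities-ABCD} then gives $4ef = (p+q)^2 + (a-b+c-d)^2$, and the two roots $f_\pm$ of $Af^2 + Bf + C$ correspond to the two sign choices of~$pq$, whence $(f_+ - f_-)^2 = H(b,c,e)\,H(a,d,e)/e^2$. By Vieta, $(f_+-f_-)^2 = (B^2 - 4AC)/A^2$; substituting $A^2 = 4e^2$ yields $B^2 - 4AC = 4\,H(b,c,e)\,H(a,d,e)$. Since both sides are polynomials in $\CC[a,b,c,d,e]$ that agree on the Zariski-dense subset of 5-tuples arising from plane configurations, the identity holds in the polynomial ring.

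The remaining three identities follow from Remark~\ref{rem:CM-dihedral}: each listed dihedral symmetry is a permutation of $(a,b,c,d,e,f)$ induced by relabeling the vertices $A_1,\dots,A_4$, so it leaves the \CM matrix invariant under the corresponding simultaneous permutation of its last four rows and columns, and hence preserves $M$ exactly. By the chain rule, the left-right flip $(a,b,c,d,e,f) \mapsto (a,d,c,b,f,e)$ converts $\partial_\rightarrow M$ at $(a,b,c,d,e,f)$ into $\partial_\leftarrow M$ at $(a,d,c,b,f,e)$, so it transforms \eqref{eq:noncoh-f} into \eqref{eq:noncoh-e}; the $90^\circ$ rotation $(a,b,c,d,e,f) \mapsto (f,d,e,b,a,c)$ then turns \eqref{eq:noncoh-e} into \eqref{eq:noncoh-a}; and composing with the top-bottom flip $(a,b,c,d,e,f) \mapsto (c,d,a,b,e,f)$ produces \eqref{eq:noncoh-c}.

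The main obstacle is bookkeeping: tracking how each permutation of the six variables carries $\partial_\rightarrow M$ to the appropriate partial derivative evaluated at the permuted point, and making sure the right-hand sides of \eqref{eq:noncoh-a}--\eqref{eq:noncoh-e} emerge correctly with no sign errors. The underlying algebraic computations themselves are routine once this bookkeeping is in place.
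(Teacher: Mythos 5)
Your argument is correct, and it reaches the paper's proof through a more conceptual route. The paper's proof consists of asserting the polynomial identity $(\partial_\rightarrow M)^2 = -8e\,M + 4H(b,c,e)H(a,d,e)$ (equation~\eqref{eq:M-and-partial-M}), to be checked by direct expansion, and then setting $M=0$; the remaining three identities are handled ``similarly'' or by the $S_4$-symmetry. Your reduction $(\partial_\rightarrow M)^2 \equiv B^2-4AC \pmod{M}$ with $A=-2e$ is literally the same identity (the multiple of $M$ being $4A\,M=-8e\,M$), but instead of expanding $B^2-4AC$ by brute force you identify it as $A^2(f_+-f_-)^2$ and compute the root difference geometrically: Bretschneider's formula~\eqref{eq:bretschneider} gives $4e(f_+-f_-)=(p+q)^2-(p-q)^2=4pq$, whence $B^2-4AC = 4e^2\cdot p^2q^2/e^2 = 4H(b,c,e)H(a,d,e)$, first on the Zariski-dense locus where $e\neq 0$ and $pq\neq 0$ (so that $f_\pm$ really are the two distinct roots of the quadratic), and then identically. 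This buys an explanation of \emph{why} the discriminant factors as a product of two Heron expressions --- the two roots differ by twice the cross term $pq/e$ --- at the cost of invoking the realizability results of Section~2 inside what the paper treats as a purely algebraic verification. Your bookkeeping for the dihedral symmetries is also sound: each permutation in Remark~\ref{rem:CM-dihedral} preserves $M$ exactly (being a simultaneous row/column permutation of the symmetric matrix), so the chain rule carries $\partial_\rightarrow M$ to the stated partials and \eqref{eq:noncoh-f} to \eqref{eq:noncoh-e}, \eqref{eq:noncoh-a}, \eqref{eq:noncoh-c} in turn, using the symmetry of $H$ in its three arguments.
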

(There are also analogous formulas for $\partial_\smallneline M$ and~$\partial_\smallseline M$.)

\begin{proof}
It is straightforward to verify the polynomial identity
\begin{equation}
\label{eq:M-and-partial-M}
(\partial_\rightarrow M(a,b,c,d,e,f))^2 = -8e M(a,b,c,d,e,f) +4H(b,c,e)H(a,d,e). 
\end{equation}
For a \CM diamond, we have $M(a,b,c,d,e,f)=0$, and \eqref{eq:noncoh-f} follows. 
%
Formulas \eqref{eq:noncoh-a}--\eqref{eq:noncoh-e} are proved in a similar way; alternatively, use the symmetry of $M$ under the natural $S_4$-action. 
%
%
\end{proof}

When a \CM diamond consists of the measurements coming from a plane quadrilateral, Lemma~\ref{lem:noncoh-discriminant} can be strengthened by assigning a geometric meaning to each evaluation of a partial derivative of~$M$:  

\begin{prop}[{\rm \cite[p.~40]{dziobek1900}; cf.~\cite[Theorem~1]{kpsz2017}}]
\label{prop:partial-menger-formulas}
Let $(A_1,A_2,A_3,A_4)$ be a quadri\-lateral in~$\AAA$. We continue to use notation \eqref{eq:xij(P)}, \eqref{eq:Sijk(P)}, \eqref{eq:abcd-ABCD}, \eqref{eq:pqrs-ABCD}, \eqref{eq:partial-arrows}, \eqref{eq:partial-slanted}.
Let 
\[
\mathbf{x}=(x_{14}\,,x_{12}\,,x_{23}\,,x_{34}\,,x_{13}\,,x_{24}) 
=(a,b,c,d,e,f)
\]
denote the corresponding Cayley-Menger diamond. Then
\begin{alignat}{5}
\label{eq:partial-13,24}
\partial_\leftarrow M(\xx) &= -2S_{124}S_{234}, \qquad 
& \partial_\rightarrow M(\xx) &= -2S_{123}S_{134}, 
\\
\label{eq:partial-12,34}
\partial_\smallneline M(\xx) &= 2S_{134}S_{234}, \qquad 
& \partial_\smallseline M(\xx) &=2S_{123}S_{124}, 
\\
\label{eq:partial-14,23}
\partial_\uparrow M(\xx) &=2S_{123}S_{234} , \qquad 
& \partial_\downarrow M(\xx) &=2S_{124}S_{134} .
\end{alignat}
\end{prop}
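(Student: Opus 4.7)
The plan is to deduce each of the six identities from Lemma~\ref{lem:noncoh-discriminant} together with Heron's formula. The four squared identities in Lemma~\ref{lem:noncoh-discriminant} directly cover the partials $\partial_\uparrow M$, $\partial_\downarrow M$, $\partial_\leftarrow M$, $\partial_\rightarrow M$; the analogous squared identities for $\partial_\smallneline M$ and $\partial_\smallseline M$ follow by the same computation, invoking the dihedral symmetries of the \CM polynomial noted in Remark~\ref{rem:CM-dihedral}. Applying Heron's identity $H(x_{ij},x_{jk},x_{ik})=S_{ijk}^2$ to each of these squared identities gives, for every one of the six partial derivatives, a polynomial identity in the eight coordinates of $A_1,\dots,A_4\in\AAA$ of the form
\[
(\partial_\bullet M(\xx))^2 \;=\; \bigl(2\, S_{ij_1k_1} S_{ij_2k_2}\bigr)^2.
\]

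Since the coordinate ring $\CC[(A_i')_{i=1}^4,(A_i'')_{i=1}^4]$ is an integral domain, this forces $\partial_\bullet M(\xx)=\varepsilon\cdot 2\,S_{ij_1k_1}S_{ij_2k_2}$ with some sign $\varepsilon\in\{\pm 1\}$ in each case. I~would pin down all six signs at once by specializing to a single explicit quadrilateral in which every signed area is nonzero, for instance $A_1=(0,0)$, $A_2=(1,0)$, $A_3=(0,1)$, $A_4=(2,1)$. A straightforward evaluation then confirms the asserted pattern: the two ``diagonal'' partials $\partial_\leftarrow M$ and $\partial_\rightarrow M$ (associated with the squared diagonal lengths $e, f$) carry a minus sign, while the four ``side'' partials $\partial_\uparrow M, \partial_\downarrow M, \partial_\smallneline M, \partial_\smallseline M$ (associated with the squared side lengths $a, b, c, d$) carry a plus sign.

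The main obstacle is purely bookkeeping. It can be reduced by invoking the $S_4$-action of the vertex permutations on $(A_1,\dots,A_4)$: this action is transitive on the pair of diagonals $\{e,f\}$ and on the four-element set of sides $\{a,b,c,d\}$; under it, $M$ is invariant while the partial derivatives and the products of signed areas are permuted compatibly. Consequently, it is enough to pin down the sign in one case from each orbit, for instance $\partial_\rightarrow M$ and $\partial_\uparrow M$. For the former, a clean direct calculation is available: placing $A_1$ at the origin and setting $u=\overrightarrow{A_1A_2}$, $w=\overrightarrow{A_1A_3}$, $v=\overrightarrow{A_1A_4}$, the expression $Q(\xx)$ from~\eqref{eq:Q(abcdef)} expands to $4\bigl(\langle w,w\rangle\langle u,v\rangle - \langle u,w\rangle\langle w,v\rangle\bigr)$, which by the Binet--Cauchy identity equals $-4[u,w][w,v]$; this yields $\partial_\rightarrow M(\xx) = 2Q(\xx) = -2 S_{123}S_{134}$ on the nose.
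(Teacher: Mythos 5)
Your proof is correct, and it is worth noting that the paper itself gives no proof of this proposition: it is quoted from the literature (\cite{dziobek1900}, cf.\ \cite{kpsz2017}), so a self-contained derivation is a genuine addition rather than a re-derivation. Both of your routes are sound. The squaring route works because, after substituting the coordinates of $A_1,\dots,A_4$, the polynomial identity \eqref{eq:M-and-partial-M} together with $M(\xx)=0$ (Proposition~\ref{prop:cayley-menger-plane}) and Heron's formula turns each squared identity of Lemma~\ref{lem:noncoh-discriminant} into an equality of squares in the integral domain $\CC[(A_i')_i,(A_i'')_i]$; the reduction to a single global sign per identity is then legitimate, and your sample quadrilateral has all four signed areas nonzero, so it does pin the signs down. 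The second route is cleaner and is essentially the classical argument: your expansion $Q(\xx)=4\bigl(\langle w,w\rangle\langle u,v\rangle-\langle u,w\rangle\langle w,v\rangle\bigr)$ is correct, and Binet--Cauchy gives $\partial_\rightarrow M(\xx)=-2S_{123}S_{134}$ on the nose. The one step you should not wave at too quickly is the $S_4$-transport: odd permutations of the vertices flip the signs of individual $S_{ijk}$'s, and it is exactly this bookkeeping that converts the single identity for $\partial_\rightarrow M$ into the asymmetric pattern of the statement (coefficient $-2$ for the two diagonal partials, $+2$ for the four side partials) --- for instance, applying the transposition $(1\,2)$ to $\partial_\rightarrow M(\xx)=-2S_{123}S_{134}$ yields $\partial_\uparrow M(\xx)=-2S_{213}S_{234}=+2S_{123}S_{234}$. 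Since you also propose verifying all six signs at an explicit quadrilateral, there is no gap either way.
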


\begin{prop}
\label{prop:noncoh-equation-123456}
Let 
\begin{align}
\label{eq:x1-hexagon}
\mathbf x_1 &= (x_{15},x_{12},x_{24},x_{45},x_{14},x_{25}),\\
\label{eq:x2-hexagon}
\mathbf x_2 &= (x_{16},x_{12},x_{25},x_{56},x_{15},x_{26}),\\
\label{eq:x3-hexagon}
\mathbf x_3 &= (x_{25},x_{23},x_{34},x_{45},x_{24},x_{35}),\\
\label{eq:x4-hexagon}
\mathbf x_4 &= (x_{26},x_{23},x_{35},x_{56},x_{25},x_{36})
\end{align}
be \CM diamonds (see Figure~\ref{fig:3-by-3-variables-hexagon} for a visual representation). Then
\begin{equation}
\label{eq:noncoherence-123456}
\bigl(\partial_\leftarrow M(\mathbf x_1)\,\partial_\rightarrow M(\mathbf x_4)\bigr)^2 = \bigl(\partial_\uparrow M(\mathbf x_2)\,\partial_\downarrow M(\mathbf x_3)\bigr)^2.
\end{equation}
Moreover, if $x_{ij} \!=\! x_{ij}(P)$ are the measurements of a hexagon $P \!= \!(A_1, \dots, A_6)$,~then
\begin{equation}
\label{eq:hexagon-coherence}
\partial_\leftarrow M(\mathbf x_1)\,\partial_\rightarrow M(\mathbf x_4) = \partial_\uparrow M(\mathbf x_2)\,\partial_\downarrow M(\mathbf x_3).
\end{equation}
\end{prop}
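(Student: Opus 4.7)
The plan is to prove the two assertions separately: equation~\eqref{eq:noncoherence-123456} will follow from the squared-derivative identities of Lemma~\ref{lem:noncoh-discriminant}, while equation~\eqref{eq:hexagon-coherence} will follow from the finer geometric interpretation supplied by Proposition~\ref{prop:partial-menger-formulas}. In both cases, the preparatory step is purely bookkeeping: for each of the four diamonds $\mathbf{x}_1,\mathbf{x}_2,\mathbf{x}_3,\mathbf{x}_4$, I need to identify the bijection between its six entries and the six generic variables $(a,b,c,d,e,f)$ used in the partial-derivative formulas, which amounts to relabeling its four active hexagon vertices as $1,2,3,4$ in the canonical order.

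For \eqref{eq:noncoherence-123456}, I would apply \eqref{eq:noncoh-e} to $\mathbf{x}_1$, \eqref{eq:noncoh-f} to $\mathbf{x}_4$, \eqref{eq:noncoh-a} to $\mathbf{x}_2$, and \eqref{eq:noncoh-c} to $\mathbf{x}_3$. Each resulting squared partial derivative factors as $4$ times a product of two $H$-values. The four $H$-factors appearing in $(\partial_\leftarrow M(\mathbf{x}_1)\,\partial_\rightarrow M(\mathbf{x}_4))^2$ turn out to be $H(x_{12},x_{15},x_{25})$, $H(x_{24},x_{45},x_{25})$, $H(x_{23},x_{35},x_{25})$, and $H(x_{26},x_{56},x_{25})$, corresponding to the four triangles that share the vertex pair $\{2,5\}$. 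The same four $H$-values appear, up to permutation of their arguments, in $(\partial_\uparrow M(\mathbf{x}_2)\,\partial_\downarrow M(\mathbf{x}_3))^2$; the two products therefore agree by the symmetry of $H$ in its three arguments (see~\eqref{eq:Hpqr}).

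For \eqref{eq:hexagon-coherence}, under the hypothesis that the entries come from a hexagon~$P$, Proposition~\ref{prop:partial-menger-formulas} refines each partial derivative into a genuine product of signed areas with a definite sign. Using the labelings from above, I compute $\partial_\leftarrow M(\mathbf{x}_1)=-2\,S_{125}S_{245}$, $\partial_\rightarrow M(\mathbf{x}_4)=-2\,S_{235}S_{256}$, $\partial_\uparrow M(\mathbf{x}_2)=2\,S_{125}S_{256}$, and $\partial_\downarrow M(\mathbf{x}_3)=2\,S_{235}S_{245}$. The two minus signs cancel, and both sides of \eqref{eq:hexagon-coherence} evaluate to $4\,S_{125}\,S_{235}\,S_{245}\,S_{256}$, thereby also resolving the square-root ambiguity left by~\eqref{eq:noncoherence-123456}.

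The main obstacle throughout is organizational rather than substantive: one must carefully track the rotational and reflectional orientation of each diamond $\mathbf{x}_i$ relative to the canonical one so that the right instance of \eqref{eq:partial-13,24}--\eqref{eq:partial-14,23} (for the geometric identity) or the right clause of Lemma~\ref{lem:noncoh-discriminant} (for the algebraic one) is applied, with the resulting Heron triples and signed-area factors then lining up correctly.
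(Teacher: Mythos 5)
Your proposal is correct and follows essentially the same route as the paper: apply Lemma~\ref{lem:noncoh-discriminant} to each of the four diamonds to see that both squared products equal $16\,H_{125}H_{245}H_{235}H_{256}$, and then, in the geometric case, use Proposition~\ref{prop:partial-menger-formulas} to identify both unsquared products with $4\,S_{125}S_{235}S_{245}S_{256}$. Your explicit signed-area evaluations and the resulting sign cancellation match the paper's computation exactly.
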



\begin{figure}[ht]
\begin{center}
\includegraphics[scale=0.65]{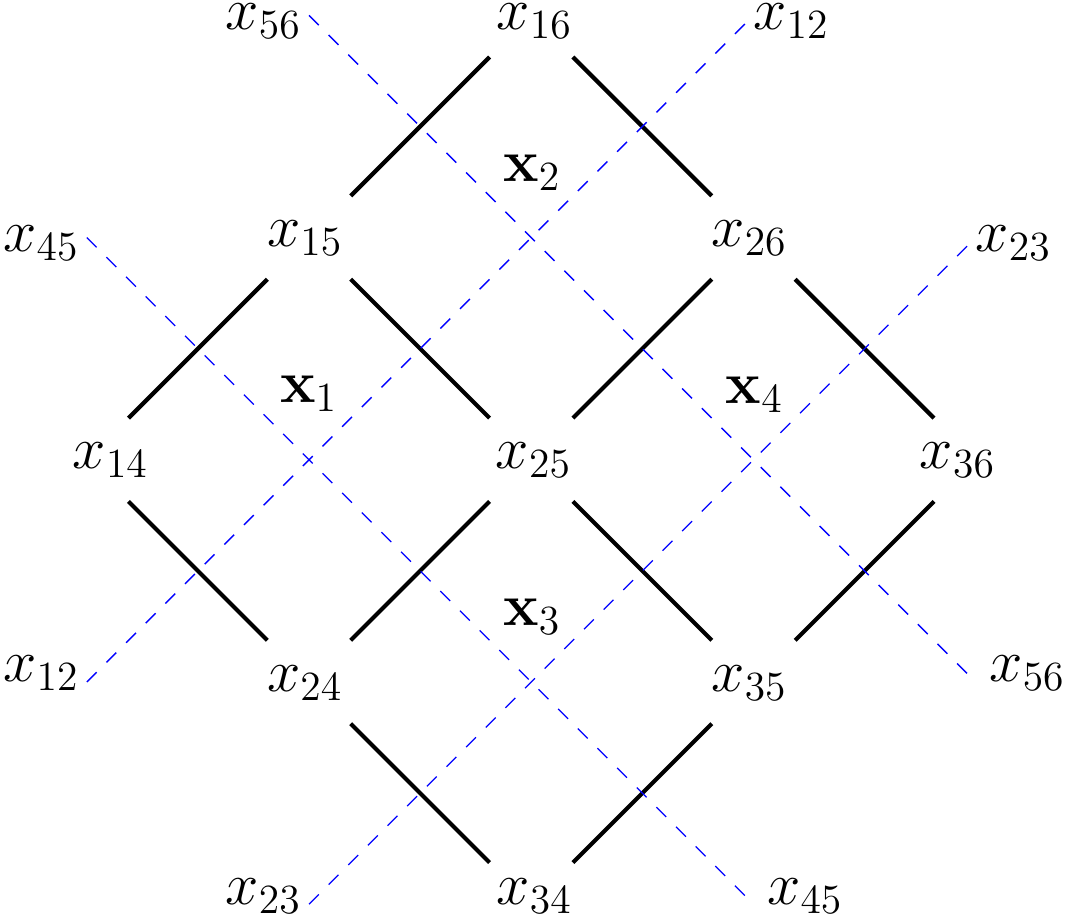}
\vspace{-.1in}
\end{center}
\caption{The variables involved in Propositions~\ref{prop:noncoh-equation-123456} and~\ref{prop:(non)coh-equations}.}
\vspace{-.2in}
\label{fig:3-by-3-variables-hexagon}
\end{figure}

\begin{proof}
Applying Lemma~\ref{lem:noncoh-discriminant} to each \CM diamond $\mathbf x_i$, we conclude that 
\[
\bigl(\partial_\leftarrow M(\mathbf x_1)\partial_\rightarrow M(\mathbf x_4)\bigr)^2 =16H_{245}H_{125}H_{256}H_{235} = \bigl(\partial_\uparrow M(\mathbf x_2)\partial_\downarrow M(\mathbf x_3)\bigr)^2,
\]
where
\begin{alignat*}{4}
H_{125} &= H(x_{12},x_{15},x_{25}),\quad & H_{256} &= H(x_{25},x_{26},x_{56}), \\
H_{245} &= H(x_{24},x_{25},x_{45}),\quad & H_{235} &= H(x_{23},x_{25},x_{35}). 
\end{alignat*}
If the $x_{ij}$ come from a hexagon $(A_1, \dots, A_6)$, then Proposition~\ref{prop:partial-menger-formulas} applies, and 
\[
\partial_\leftarrow M(\mathbf x_1)\partial_\rightarrow M(\mathbf x_4) = 4S_{245}S_{125}S_{256}S_{235} = \partial_\uparrow M(\mathbf x_2)\partial_\downarrow M(\mathbf x_3).\qedhere
\]
\end{proof}

\begin{rem}
The last assertion in Proposition~\ref{prop:noncoh-equation-123456} can be restated in a more explicit form, using notation~\eqref{eq:Q(abcdef)}. Let $P \!= \!(A_1, \dots, A_6)$ be a hexagon on the plane~$\AAA$. As before, let $x_{ij}$ denote the squared distance between vertices $A_i$ and~$A_j$.~Then 
\begin{align}
\label{eq:coherence-Q-123456}
&Q(x_{26},x_{23},x_{35},x_{56},x_{25},x_{36}) \,
Q(x_{24}, x_{12}, x_{15}, x_{45}, x_{25}, x_{14}) \\
\notag
=\,&Q(x_{15},x_{56},x_{26},x_{12},x_{25},x_{16})\,
Q(x_{35},x_{45},x_{24}, x_{23},x_{25},x_{34}). 
\end{align}
Equation~\eqref{eq:coherence-Q-123456} involves 13 squared distances~$x_{ij}$, with the exception of $x_{13}$ and~$x_{46}$. 
\end{rem}

Let $\mathbf{z}$ be a \CM frieze. Consider four adjacent diamonds sharing a common vertex~$(i,j)$, as shown in Figure~\ref{fig:3-by-3-variables}.
Proposition~\ref{prop:noncoh-equation-123456} implies that for any $(i,j) \in \ZZ^2$ with $2 \leq j-i \leq n-2$, we have
\begin{equation}
\label{eq:noncoherence}
(\partial_\leftarrow M(\xdiamond_{i-1,j-1}(\mathbf{z}))\,
\partial_\rightarrow M(\xdiamond_{i,j}(\mathbf{z})))^2
=(\partial_\uparrow M(\xdiamond_{i-1,j}(\mathbf{z}))\,
\partial_\downarrow M(\xdiamond_{i,j-1}(\mathbf{z})))^2, 
\end{equation}
where we use notation~\eqref{eq:xdiamond} for the diamonds of~$\mathbf{z}$. 
Consequently, 
\begin{equation}
\label{eq:+-coherence}
\partial_\leftarrow M(\xdiamond_{i-1,j-1}(\mathbf{z}))\,
\partial_\rightarrow M(\xdiamond_{i,j}(\mathbf{z}))
=\pm 
\partial_\uparrow M(\xdiamond_{i-1,j}(\mathbf{z}))\,
\partial_\downarrow M(\xdiamond_{i,j-1}(\mathbf{z})). 
\end{equation}
In general, the signs of the products appearing on both sides of ~\eqref{eq:+-coherence} do not have to match, see Example~\ref{example:noncoherent} below. The settings where they do match play a key role in this section. 

\begin{figure}[ht]
\includegraphics[scale=0.65]{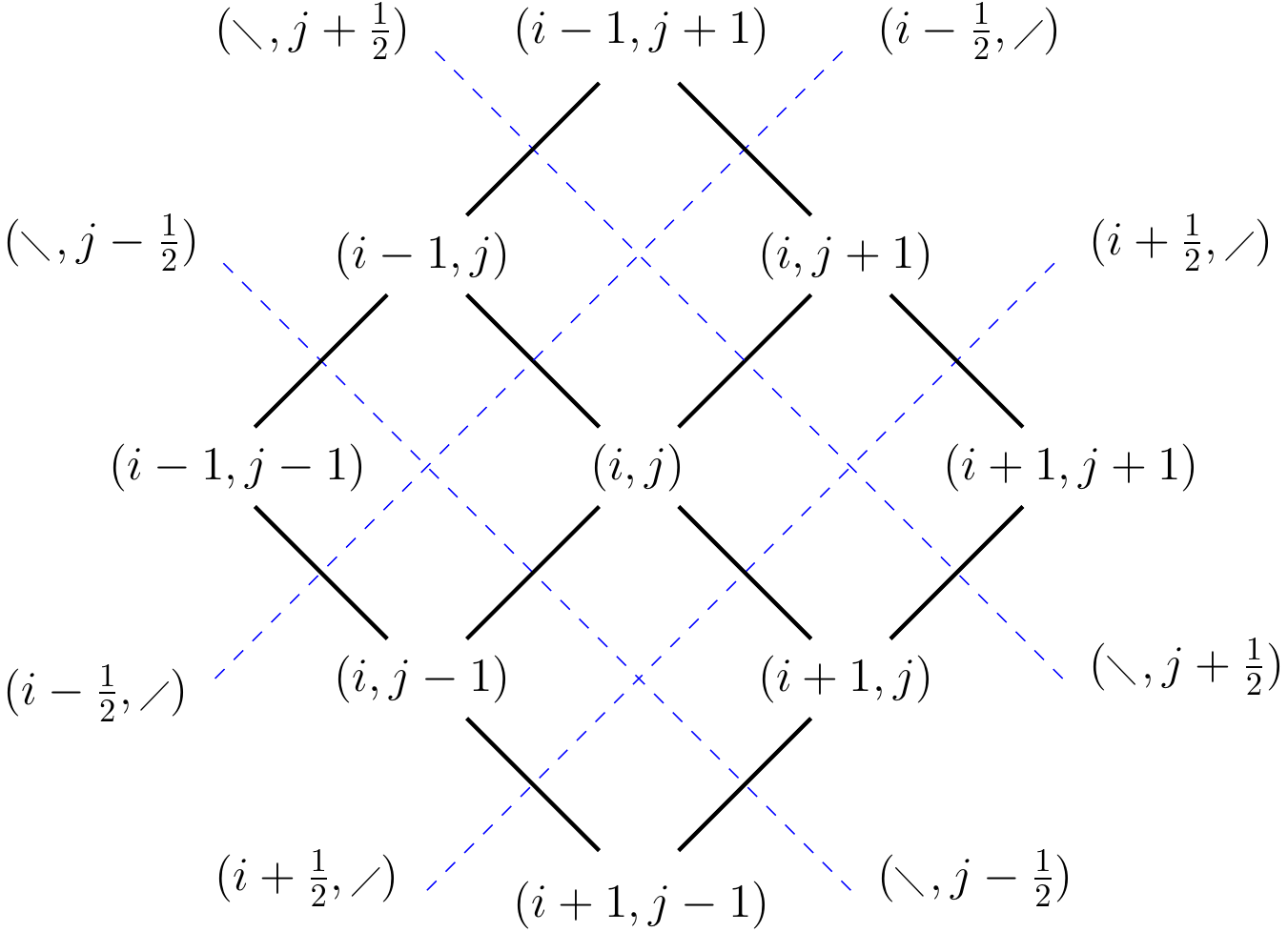}
\vspace{-.1in}
\caption{The fragment of a \CM frieze involved in Definition~\ref{defn:coherence-equation}. 
}
\vspace{-.2in}
\label{fig:3-by-3-variables}
\end{figure}



\begin{defn}
\label{defn:coherence-equation}
We call a \CM frieze $\mathbf{z}$ \emph{coherent} if, for all $i,j \in \ZZ^2$ with~$2 \leq j-i \leq n-2$, we have
\begin{equation}
\label{eq:coherence}
\partial_\leftarrow M(\xdiamond_{i-1,j-1}(\mathbf{z}))\,
\partial_\rightarrow M(\xdiamond_{i,j}(\mathbf{z}))
=
\partial_\uparrow M(\xdiamond_{i-1,j}(\mathbf{z}))\,
\partial_\downarrow M(\xdiamond_{i,j-1}(\mathbf{z})). 
\end{equation}
Accordingly, we call \eqref{eq:coherence} (or \eqref{eq:hexagon-coherence}) the \emph{coherence condition}. 
\end{defn}

\begin{thm}
\label{thm:geometric-frieze-is-coherent}
For any polygon $P$ in the plane~$\AAA$, the corresponding \CM frieze $\mathbf{z}_{\textup{CM}}(P)$ (see Definition~\ref{defn:CM-Frieze-from-P}) is coherent. 
\end{thm}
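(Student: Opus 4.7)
The plan is to reduce the theorem to the hexagonal identity \eqref{eq:hexagon-coherence} established in Proposition~\ref{prop:noncoh-equation-123456}. Namely, I claim that for any interior position $(i,j)$ with $2 \le j-i \le n-2$, the coherence equation \eqref{eq:coherence} for $\mathbf{z}_{\textup{CM}}(P)$ at $(i,j)$ is literally the hexagonal identity \eqref{eq:hexagon-coherence} applied to a suitably chosen six-point subconfiguration of~$P$.

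Fix $(i,j)$ as above and consider the six vertices
\[
B_1 = A_{\langle i-1\rangle},\ B_2 = A_{\langle i\rangle},\ B_3 = A_{\langle i+1\rangle},\ B_4 = A_{\langle j-1\rangle},\ B_5 = A_{\langle j\rangle},\ B_6 = A_{\langle j+1\rangle}
\]
of~$P$. Using Definition~\ref{defn:CM-Frieze-from-P}, unwind the entries of the four diamonds $\xdiamond_{i-1,j-1}$, $\xdiamond_{i,j}$, $\xdiamond_{i-1,j}$, $\xdiamond_{i,j-1}$ via $z_{(p,q)} = x_{\langle p\rangle\langle q\rangle}$. The resulting sextuples of squared distances coincide, in this order, with the six-tuples $\mathbf{x}_1$, $\mathbf{x}_4$, $\mathbf{x}_2$, $\mathbf{x}_3$ constructed in \eqref{eq:x1-hexagon}--\eqref{eq:x4-hexagon} relative to the hexagon $(B_1,\dots,B_6)$; for instance,
\[
\xdiamond_{i-1,j-1}(\mathbf{z}_{\textup{CM}}(P)) = (x_{\langle i-1\rangle\langle j\rangle},\, x_{\langle i-1\rangle\langle i\rangle},\, x_{\langle i\rangle\langle j-1\rangle},\, x_{\langle j-1\rangle\langle j\rangle},\, x_{\langle i-1\rangle\langle j-1\rangle},\, x_{\langle i\rangle\langle j\rangle}),
\]
which matches $\mathbf{x}_1 = (x_{B_1 B_5}, x_{B_1 B_2}, x_{B_2 B_4}, x_{B_4 B_5}, x_{B_1 B_4}, x_{B_2 B_5})$, and the other three identifications are analogous. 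With these identifications in hand, the coherence equation \eqref{eq:coherence} at $(i,j)$ is precisely \eqref{eq:hexagon-coherence} for $(B_1,\dots,B_6)$, so Proposition~\ref{prop:noncoh-equation-123456} finishes the job.

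The only subtlety is that when $j-i$ is at the extremes of the interval $[2, n-2]$ (e.g., $j-i = n-2$ forces $\langle j+1\rangle = \langle i-1\rangle$), some of the $B_k$'s coincide, so $(B_1,\dots,B_6)$ is a degenerate hexagon. This causes no trouble: Proposition~\ref{prop:noncoh-equation-123456} is proved via Proposition~\ref{prop:partial-menger-formulas}, whose identities are polynomial in the coordinates of the six points and thus remain valid under any specialization. The proof is therefore entirely bookkeeping; I expect no real obstacle beyond carefully matching the positions in each $\xdiamond_{*,*}$ with the entries of the corresponding $\mathbf{x}_k$.
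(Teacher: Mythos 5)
Your proposal is correct and follows essentially the same route as the paper: the coherence condition at $(i,j)$ is identified with equation~\eqref{eq:hexagon-coherence} for the (possibly degenerate) sub-hexagon $(A_{\langle i-1\rangle},A_{\langle i\rangle},A_{\langle i+1\rangle},A_{\langle j-1\rangle},A_{\langle j\rangle},A_{\langle j+1\rangle})$, which holds by Proposition~\ref{prop:noncoh-equation-123456}. Your explicit matching of the four diamonds with $\mathbf{x}_1,\mathbf{x}_4,\mathbf{x}_2,\mathbf{x}_3$ and your polynomial-identity justification of the degenerate cases are both accurate and merely spell out what the paper leaves implicit.
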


\begin{proof}
The coherence condition~\eqref{eq:coherence} for $(i,j) \in \ZZ^2$ is precisely equation~\eqref{eq:hexagon-coherence} for the (possibly degenerate) sub-hexagon $(A_{i-1},A_i,A_{i+1},A_{j-1},A_j,A_{j+1})$ of~$P$. 
This equation holds by virtue of Proposition~\ref{prop:(non)coh-equations}.
%
%
%
\end{proof}

\begin{rem}
\label{rem:coherence-13}
The coherence condition~\eqref{eq:coherence} involves 13~entries of the frieze whose indices are shown in Figure~\ref{fig:3-by-3-variables}. 
The indexing set includes 9~integer points forming the $3\times 3$ grid 
$\{i-1,i,i+1\}\times\{j-1,j,j+1\}$ together with 4~indices 
$\{(i\pm\frac12,\neline), (\seline,j\pm\frac12)\}$ corresponding to slanted dashed lines. To write the coherence condition in a more explicit (but not too bulky) form, we introduce the temporary notation 
\begin{align*}
\left[{\ }
\begin{matrix}
       &  d_2  &  t_{13} \\[8pt]
d_1 &  t_{12} &         & t_{23} \\[8pt]
t_{11} &        & t_{22} &        & t_{33} \\[8pt]
b_1  & t_{21} &    & t_{32} \\[8pt]
      & b_2  &   t_{31}
\end{matrix}
{\,}\right]
\stackrel{\rm def}{=}
\left[{\ }
\begin{matrix}
       &  z_{(\smallseline,j+\frac12)}  &  z_{(i-1,j+1)} \\[8pt]
z_{(\smallseline,j-\frac12)} &  z_{(i-1,j)} &         & z_{(i,j+1)} \\[8pt]
z_{(i-1,j-1)} &        & z_{(i,j)} &        & z_{(i+1,j+1)} \\[8pt]
z_{(i-\frac12,\smallneline)}  & z_{(i,j-1)} &    & z_{(i+1,j)} \\[8pt]
      &  z_{(i+\frac12,\smallneline)} &   z_{(i+1,j-1)}
\end{matrix}
{\,}\right].
\end{align*}
Using this notation along with~\eqref{eq:Q(abcdef)}, the coherence condition~\eqref{eq:coherence} becomes
\begin{align}
\label{eq:coherence-Q}
&Q(t_{23},b_2,t_{32},d_2,t_{22},t_{33}) \,Q(t_{21}, b_1, t_{12}, d_1, t_{22}, t_{11}) \\
\notag
=\,&Q(t_{12},d_2,t_{23},b_1,t_{22},t_{13})\,Q(t_{32},d_1,t_{21}, b_2,t_{22},t_{31}) 
\end{align}
(cf.\ \eqref{eq:coherence-Q-123456}). 
\end{rem}

\begin{example}
\label{example:noncoherent}
The \CM frieze $\mathbf{z}_{\textup{CM}}(P)$ shown in Figure~\ref{fig:coherent-frieze-example} is obtained from the polygon~$P$ in Figure~\ref{fig:nonconvex-hexagon}, and is therefore coherent by Theorem~\ref{thm:geometric-frieze-is-coherent}. 
 
Figure~\ref{fig:noncoh-frieze} shows a non-coherent \CM frieze $\mathbf{z}$ of order~6. 
\end{example}

\begin{figure}[ht]
\begin{center}
\vspace{-.1in}
\includegraphics[scale=0.65]{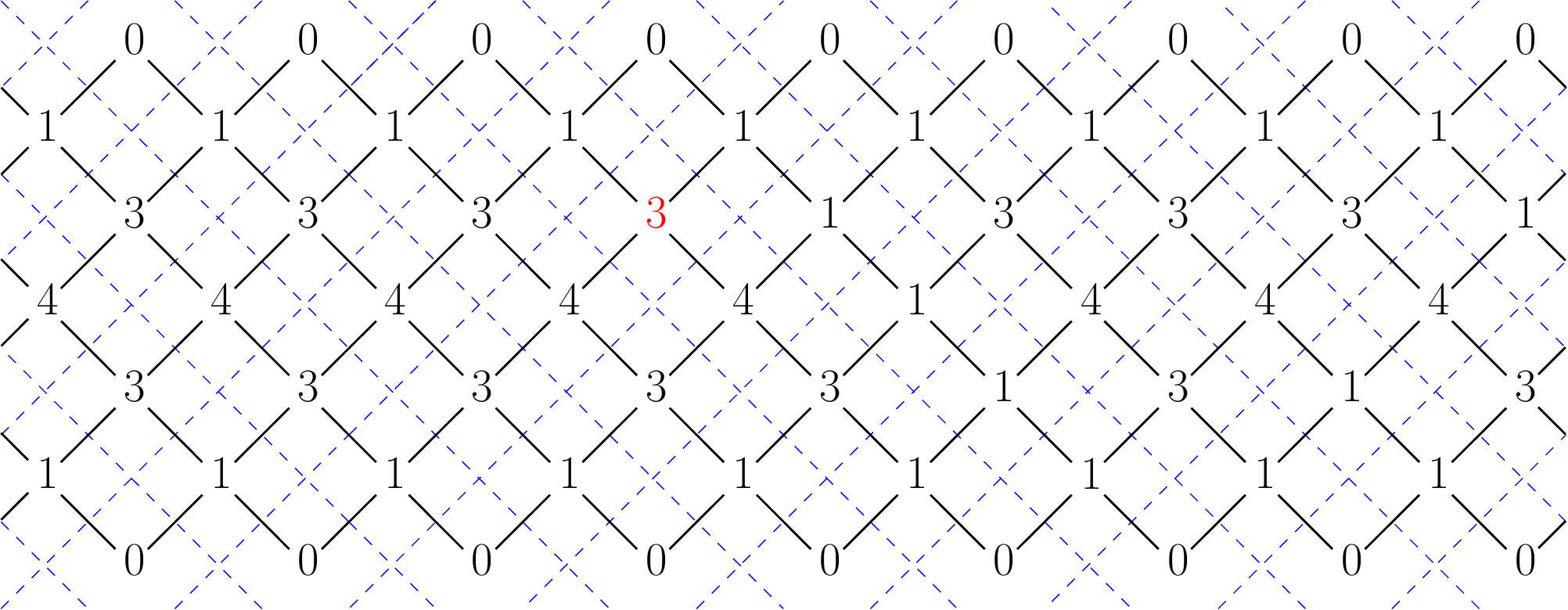}
\end{center}
\caption{A non-coherent \CM frieze $\mathbf{z}$ of order~6. The entries asso\-ciated with the dashed lines (which match the entries in the top and bottom nonzero rows) 
are all equal to~1. 
The coherence condition is violated at one place only, namely for the four diamonds surrounding the red~entry. 
%
}
\vspace{-.2in}
\label{fig:noncoh-frieze}
\end{figure}

%


The following result shows that the coherence condition can be used as a basis for a rational recurrence.

\begin{prop}
\label{prop:(non)coh-equations}
Let 
\begin{align}
\label{eq:x1-hexagon-again}
\mathbf x_1 &= (x_{15},x_{12},x_{24},x_{45},x_{14},x_{25}),\\
\label{eq:x2-hexagon-again}
\mathbf x_2 &= (x_{16},x_{12},x_{25},x_{56},x_{15},x_{26}),\\
\label{eq:x3-hexagon-again}
\mathbf x_3 &= (x_{25},x_{23},x_{34},x_{45},x_{24},x_{35})
\end{align}
be \CM diamonds, cf.\ \eqref{eq:x1-hexagon}--\eqref{eq:x3-hexagon} and Figure~\ref{fig:3-by-3-variables-hexagon}. 
If $x_{25}H_{245}H_{125} \neq 0$, then there is a unique number $x_{36} \in \CC$ such that equation~\eqref{eq:hexagon-coherence} is satisfied, where
\begin{equation}
\label{eq:x4-hexagon-again}
\mathbf x_4 = (x_{26},x_{23},x_{35},x_{56},x_{25},x_{36}), 
\end{equation}
as in~\eqref{eq:x4-hexagon}. Moreover, $\mathbf x_4$ is a \CM diamond.

Similarly, let $\mathbf{x}_2$, $\mathbf{x}_3$, $\mathbf{x}_4$ be three \CM diamonds as in \eqref{eq:x2-hexagon-again}--\eqref{eq:x4-hexagon-again}. If~$x_{25}H_{256}H_{235} \neq 0$, then there is a unique number $x_{14}\in\CC$ such that equation~\eqref{eq:hexagon-coherence} is satisfied, with $\mathbf{x}_1$ given by \eqref{eq:x1-hexagon-again}. Moreover, $\mathbf x_1$ is a \CM diamond.
\end{prop}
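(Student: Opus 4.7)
The plan is to treat the coherence equation~\eqref{eq:hexagon-coherence} as a linear equation in the single unknown $x_{36}$. Since $\partial_\rightarrow M(\mathbf{x}_4) = 2\,Q(x_{26},x_{23},x_{35},x_{56},x_{25},x_{36})$ and formula~\eqref{eq:Q(abcdef)} shows that $Q$ depends linearly on its last argument with leading coefficient $-2e$, the quantity $\partial_\rightarrow M(\mathbf{x}_4)$ is linear in $x_{36}$ with leading coefficient $-4\,x_{25}$. By Lemma~\ref{lem:noncoh-discriminant} applied to the Cayley-Menger diamond~$\mathbf{x}_1$, we have $(\partial_\leftarrow M(\mathbf{x}_1))^2 = 4H_{125}H_{245}$, which is nonzero by hypothesis. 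Combined with $x_{25}\ne 0$, this guarantees that the leading coefficient $-4\,x_{25}\,\partial_\leftarrow M(\mathbf{x}_1)$ of \eqref{eq:hexagon-coherence}, viewed as a polynomial in $x_{36}$, is nonzero, so the equation admits a unique solution $x_{36}\in\CC$.

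To verify that the resulting $\mathbf{x}_4$ is itself a Cayley-Menger diamond, I would compare two expressions for $(\partial_\rightarrow M(\mathbf{x}_4))^2$. The polynomial identity~\eqref{eq:M-and-partial-M} applied with $e=x_{25}$ gives
\[
(\partial_\rightarrow M(\mathbf{x}_4))^2 \;=\; -8\,x_{25}\,M(\mathbf{x}_4) + 4\,H_{235}H_{256}.
\]
On the other hand, rearranging~\eqref{eq:hexagon-coherence} for $\partial_\rightarrow M(\mathbf{x}_4)$, squaring, and invoking Lemma~\ref{lem:noncoh-discriminant} for each of $\mathbf{x}_1,\mathbf{x}_2,\mathbf{x}_3$ yields
\[
(\partial_\rightarrow M(\mathbf{x}_4))^2 \;=\; \frac{(\partial_\uparrow M(\mathbf{x}_2))^2\,(\partial_\downarrow M(\mathbf{x}_3))^2}{(\partial_\leftarrow M(\mathbf{x}_1))^2} \;=\; \frac{16\,H_{125}H_{256}H_{235}H_{245}}{4\,H_{125}H_{245}} \;=\; 4\,H_{235}H_{256}.
\]
Matching these two expressions forces $x_{25}\,M(\mathbf{x}_4)=0$, whence $M(\mathbf{x}_4)=0$ as required.

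The second assertion I would handle by a symmetric argument. A direct expansion of the Cayley-Menger determinant reveals that the $e^2$-coefficient of $M(a,b,c,d,e,f)$ equals $-2f$ (extracted from the $3\times 3$ minor on rows and columns $\{1,3,5\}$), so $\partial_\leftarrow M = \partial M/\partial e$ is linear in $e$ with leading coefficient $-4f$. Consequently $\partial_\leftarrow M(\mathbf{x}_1)$ is linear in $x_{14}$ with leading coefficient $-4\,x_{25}$. Since $(\partial_\rightarrow M(\mathbf{x}_4))^2 = 4\,H_{235}H_{256}\ne 0$ by hypothesis, the linear equation~\eqref{eq:hexagon-coherence} determines $x_{14}$ uniquely. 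The fact that $\mathbf{x}_1$ is then a Cayley-Menger diamond is verified by the same squared-identity manipulation, using the companion of~\eqref{eq:M-and-partial-M} obtained by exchanging the two central variables,
\[
(\partial_\leftarrow M(a,b,c,d,e,f))^2 \;=\; -8f\,M(a,b,c,d,e,f) + 4\,H(a,b,f)\,H(c,d,f),
\]
which follows from the symmetry $M(a,b,c,d,e,f) = M(c,b,a,d,f,e)$ of the Cayley-Menger determinant under the simultaneous interchange $a\leftrightarrow c$, $e\leftrightarrow f$ (induced by the vertex relabeling $(1\,2)(3\,4)$).

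The main obstacle, such as it is, is simply bookkeeping: one must carefully track which partial derivatives may vanish at each step and confirm that the hypothesized nonvanishing of $x_{25}\,H_{245}\,H_{125}$ (respectively, $x_{25}\,H_{256}\,H_{235}$) is precisely what is needed to rule these out. No essentially new ideas are required beyond combining Lemma~\ref{lem:noncoh-discriminant} with the quadratic identity~\eqref{eq:M-and-partial-M} and its companion; the proof is in effect a short algebraic manipulation once the right linearity-in-the-unknown observation is made.
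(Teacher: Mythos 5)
Your proposal is correct and follows essentially the same route as the paper: observe that the coherence equation is linear in the unknown with leading coefficient $-4x_{25}$ times a partial derivative whose square is a nonzero product of Heron expressions (Lemma~\ref{lem:noncoh-discriminant}), then combine the quadratic identity~\eqref{eq:M-and-partial-M} with the squared coherence relation to force $x_{25}M(\mathbf{x}_4)=0$ (resp.\ $x_{25}M(\mathbf{x}_1)=0$). The only difference is presentational: you explicitly derive the companion identity for $(\partial_\leftarrow M)^2$ from the $S_4$-symmetry of the Cayley-Menger determinant, which the paper uses implicitly.
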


\begin{proof}
Direct inspection shows that equation \eqref{eq:hexagon-coherence} is of degree~$1$ in the variable~$x_{14}$, with the coefficient of~$x_{14}$ being $x_{25}\partial_\rightarrow M(\mathbf x_4)$. This coefficient is nonzero because
\[
x_{25}^2(\partial_\rightarrow M(\mathbf x_4))^2=x_{25}^2\cdot 4H_{256}H_{235} \neq0. 
\]
Hence the solution exists and is unique. The case of $x_{36}$ is treated analogously. 

Let us now prove that $\mathbf{x}_1$ is a \CM diamond, given that the same is true about $\mathbf{x}_2$, $\mathbf{x}_3$, and $\mathbf{x}_4$. Applying Lemma~\ref{lem:noncoh-discriminant} to the \CM diamonds $\mathbf{x}_2$, $\mathbf{x}_3$, and $\mathbf{x}_4$, and the identity~\eqref{eq:M-and-partial-M} to~$\mathbf{x}_1$, we obtain:
\begin{align}
\label{eq:two-partials-1}
\bigl(\partial_\uparrow M(\mathbf x_2)\partial_\downarrow M(\mathbf x_3)\bigr)^2
&=16H_{245}H_{125}H_{256}H_{235}, \\
\label{eq:two-partials-2}
\bigl(\partial_\leftarrow M(\mathbf x_1)\partial_\rightarrow M(\mathbf x_4)\bigr)^2 
&= (4H_{245}H_{125} - 8 x_{25} M(\mathbf{x}_1))\cdot 4H_{256}H_{235}.
\end{align}
Since \eqref{eq:hexagon-coherence} holds, the expressions on the left-hand-sides of \eqref{eq:two-partials-1} and \eqref{eq:two-partials-2} are equal to each other, and we conclude that $x_{25} H_{256}H_{235} M(\mathbf{x}_1)=0$. Given that $x_{25} H_{256}H_{235}\neq 0$, we get $M(\mathbf{x}_1)=0$, as desired. The case of~$\mathbf{x}_4$ is similar. 
\end{proof}

\begin{defn}[cf.\ Definition~\ref{defn:traversing-Heronian}]
\label{defn:traversing-CM}
A \emph{traversing path} $\pi$ for an order~$n$ \CM frieze is an ordered collection 
\[
\pi=((i_1, j_1), \dots, (i_{n-1},j_{n-1}),\ell_1, \dots, \ell_{n-2})
\]
of $2n-3$ indices in $\InCM$ such that
\begin{itemize}
\item
$(i_1,j_1), \dots, (i_{n-1},j_{n-1})$ are integer points in $\{(i,j)\in\ZZ^2 : 1\le j-i\le n-1\}$; 
\item
$\ell_1, \dots, \ell_{n-2}$ are lines in $L_n$; 
\item
$j_k - i_k = k$ for $k=1,\dots,n-1$;
\item
$|i_{k+1} - i_k| + |j_{k+1} - j_k| = 1$, for $k=1,\dots,n-2$;
\item
if $j_{k+1}=j_k$, then $\ell_k = (i_k-\frac12,\neline)\in L_n$; 
\item
if $i_{k+1}=i_k$, then $\ell_k = (\seline, j_k+\frac12)\in L_n$.
\end{itemize}
The following less formal description is perhaps more illuminating. Let us view the set $\{(i,j)\in\ZZ^2 : 1\le j-i\le n-1\}$ as the vertex set of a graph (a two-dimensional integer lattice). Then: 
\begin{itemize}
\item[(a)]
$(i_1,j_1), \dots, (i_{n-1},j_{n-1})$ are the nodes lying on a shortest path connecting the lower and upper boundaries of the strip of interior nodes; 
\item[(b)]
$\ell_1, \dots, \ell_{n-2}$ are the dashed lines intersecting this shortest path.  \end{itemize}
For a traversing path as above, we call the collection of $3n-4$ indices 
\[
\overline\pi=((i_1, j_1), \dots, (i_{n-1},j_{n-1}),(i_1+1, j_1+1), \dots, (i_{n-1}+1,j_{n-1}+1),\ell_1, \dots, \ell_{n-2})
\]
the \emph{thickening} of~$\pi$. 
Thus the \emph{thickened path} $\overline\pi$ consists of the subsets (a) and (b) described above together with
\begin{itemize}
\item[(c)]
the nodes on the path (a) shifted by $(1,1)$ to the right. 
\end{itemize}
\end{defn}

\begin{example}
\label{example:CM-traversing-path}
Figure~\ref{fig:traversing-path-example} shows the traversing path
\[
\pi=\bigl((0,1),(0,2),(-1,2),(-1,3),(\seline,\tfrac32), (-\tfrac12,\neline), (\seline,\tfrac52)\bigr).
\]
for an order 5 \CM frieze, cf.\ Example~\ref{example:heronian-traversing-path}. 
Its thickening $\overline\pi$ is given by  
\[
\overline\pi=\bigl((0,1),(0,2),(-1,2),(-1,3),(1,2),(1,3),(0,3),(2,3),(\seline,\tfrac32), (-\tfrac12,\neline), (\seline,\tfrac52)\bigr).
\]
The paths $\pi$ and $\overline\pi$ include $2n - 3 = 7$ and $3n - 4 = 11$ indices, respectively. 
\end{example}

\begin{figure}[ht]
\begin{center}
\includegraphics[scale=0.8]{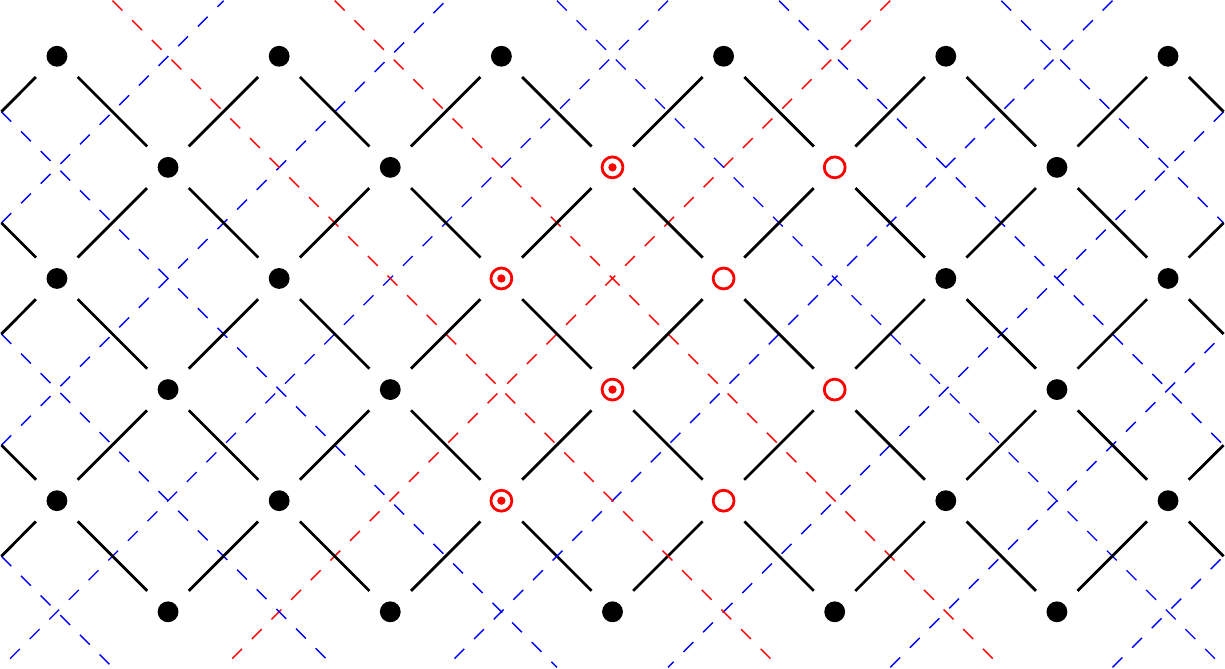}
\end{center}
\caption{A traversing path~$\pi$ for a \CM frieze of order $n=5$, and its thickening~$\overline\pi$, see   Example~\ref{example:CM-traversing-path}. The dashed lines in $\pi$ are colored red. The nodes in $\pi$ are the circled red nodes; the ones in $\overline\pi \setminus \pi$ are hollow red.}
\vspace{-.2in}
\label{fig:traversing-path-example}
\end{figure}

\begin{thm}[{\rm cf.\ Corollary~\ref{cor:frieze-is-recurrence}}]
\label{thm:CM-frieze-is-recurrence}
Let $\mathbf{z}\!=\! (z_\alpha)_{\alpha \in \InCM}$ be a coherent \CM frieze of order~$n$ such that 
\begin{align}
\label{eq:CM-nonzero-interior-z}
&\quad z_{(i,j)}\neq 0 \ \  \text{for $(i,j)\in \ZZ^2, \ 2\le j-i\le n-2$,}\\
\label{eq:CM-nonzero-interior-H}
&\begin{cases}H(z_{(i,j)},z_{(i+1,j)},z_{(i+\frac12,\smallneline)}) \neq 0, \\
H(z_{(i,j-1)},z_{(i,j)},z_{(\smallseline,j-\frac12)}) \neq 0
\end{cases}
\text{for $(i,j)\in \ZZ^2,\  2\le j-i\le n-1$}. 
\end{align}
Then $\mathbf{z}$ is uniquely determined by its entries belonging to the thickening $\overline \pi$ of an arbitrary traversing path~$\pi$. 
\end{thm}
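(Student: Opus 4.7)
The strategy is to use the coherence equation as a rational recurrence, propagating outward from $\overline\pi$ via Proposition~\ref{prop:(non)coh-equations}. I begin by noting the geometric shape of $\overline\pi$: it consists of two parallel staircase paths (namely $\pi$ and its $(1,1)$-shift) running from the lower to the upper boundary of the interior strip, together with the $n-2$ dashed-line entries between them. Hence $\overline\pi$ is a ``2-thick'' band of known entries. Given two coherent CM friezes $\mathbf{z}, \mathbf{z}'$ agreeing on $\overline\pi$, I~want to show $\mathbf{z}=\mathbf{z}'$, and I proceed by induction on the distance of an integer node $(i,j)$ from~$\overline\pi$.

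For the inductive step, let $(i,j)$ be an integer node on the frontier of the known region. I claim one can find a $3\times 3$ block of integer nodes (in the frieze's skew coordinates) such that $(i,j)$ is the unique entry of this block---together with its four adjacent dashed lines---still not known. Three of the four CM diamonds of this block are then fully determined, and Proposition~\ref{prop:(non)coh-equations} (in its ``$x_{36}$'' form or ``$x_{14}$'' form according to whether the frontier lies to the right or left of~$\overline\pi$) applies: the coherence equation~\eqref{eq:coherence} at the center of the block is degree~$1$ in $z_{(i,j)}$ and determines it uniquely, yielding $z_{(i,j)} = z'_{(i,j)}$. The nonvanishing hypotheses required to invoke Proposition~\ref{prop:(non)coh-equations} are exactly those of~\eqref{eq:CM-nonzero-interior-z}--\eqref{eq:CM-nonzero-interior-H}: the central entry of the $3\times 3$ block is an integer node with $2\le j-i\le n-2$ (hence nonzero by~\eqref{eq:CM-nonzero-interior-z}), and the two Heron expressions $H_{245},H_{125}$ (or $H_{235},H_{256}$ on the other side) are precisely among the ones required to be nonzero in~\eqref{eq:CM-nonzero-interior-H}. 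This takes care of every integer node outside~$\overline\pi$. For dashed-line entries outside~$\overline\pi$, I appeal to the boundary conditions~\eqref{eq:cm-boundary-1}--\eqref{eq:cm-boundary-2}: once all integer nodes of the bottom row $\{j-i=1\}$ and top row $\{j-i=n-1\}$ have been matched by the above propagation, every dashed-line entry $z_{(i+\frac12,\smallneline)}$ and $z_{(\smallseline,i+\frac12)}$ is forced to coincide with a known integer entry.

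The main obstacle is the combinatorial verification that the $3\times 3$ block needed in the inductive step is always available one step past the frontier. This is exactly where the thickness-2 structure of $\overline\pi$ is essential: a single staircase of knowns would leave only two of the four CM diamonds around~$(i,j)$ fully determined, which is not enough to invoke the coherence recurrence; the thickening promotes this to three of the four. Once a propagation order (sweeping outward from $\overline\pi$ in both directions, level by level in $j-i$) is fixed and this combinatorial point is checked by inspection, the uniqueness assertion follows by a routine induction, with the nonvanishing bookkeeping handled uniformly by~\eqref{eq:CM-nonzero-interior-z}--\eqref{eq:CM-nonzero-interior-H}.
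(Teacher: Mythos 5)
Your overall strategy---using the coherence equation as a degree-one recurrence and propagating outward from $\overline\pi$ via Proposition~\ref{prop:(non)coh-equations}---is the same as the paper's, and your bookkeeping of the nonvanishing hypotheses \eqref{eq:CM-nonzero-interior-z}--\eqref{eq:CM-nonzero-interior-H} is correct. But there is a genuine circularity in your treatment of the dashed-line entries and of the rows $j-i\in\{1,n-1\}$. Every instance of the coherence equation~\eqref{eq:coherence} involves, among its 13 entries, the four dashed lines $(i\pm\tfrac12,\neline)$, $(\seline,j\pm\tfrac12)$ adjacent to the $3\times3$ block, and---whenever the center sits at $j-i=2$ or $j-i=n-2$---entries of the rows $j-i=1$ or $j-i=n-1$ as well. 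These must therefore be known \emph{before} any propagation step can be carried out; yet you propose to determine them only \emph{after} the propagation, ``once all integer nodes of the bottom row and top row have been matched.'' Worse, the propagation never produces any bottom- or top-row entries: in Proposition~\ref{prop:(non)coh-equations} the entry being solved for is the left or right corner of the block, which lies in the \emph{same} row $j-i$ as the center, so the recurrence only ever outputs entries with $2\le j-i\le n-2$. As written, your induction does not close.

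The fix is the paper's first step, which you omit. The boundary identifications \eqref{eq:cm-boundary-1}--\eqref{eq:cm-boundary-2} chain together to give
\[
z_{(\smallseline,i+\frac12)}=z_{(i+\frac12,\smallneline)}=z_{(i,i+1)}=z_{(i+1,i+n)}=z_{(\smallseline,i+n+\frac12)},
\]
so the dashed-line entries form an $n$-periodic sequence, and the $n-2$ dashed lines of $\pi$ together with the four endpoint nodes of $\overline\pi$ supply $n$ consecutive values of that sequence. Hence all of $L_n$ and all four rows $j-i\in\{0,1,n-1,n\}$ are already determined by $\overline\pi$ at the outset, and only then does the coherence propagation reconstruct the remaining rows $2\le j-i\le n-2$. (A further small point: a strict ``level by level in $j-i$'' sweep does not work, since determining a row-$r$ entry requires entries of rows $r\pm 1$ and $r\pm 2$ at nearby columns; one should instead sweep column by column away from $\overline\pi$, bottom to top within each column.)
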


\begin{proof}
Given the entries indexed by the elements of~$\overline\pi$, 
the boundary conditions \eqref{eq:cm-boundary-1}--\eqref{eq:cm-boundary-2} allow us to determine the entries of $\mathbf z$ indexed by the lines~$L_n$ as well as those indexed by the four rows 
$\{(i,j) \!\in\! \ZZ^2\colon j-i \!\in\! \{0,1,n\!-\!1,n\}\}$. 
To reconstruct~ the remaining entries, indexed by \hbox{$\{(i,j)\! \in\! \ZZ^2\colon 2 \!\leq\! j\!-\!i \!\leq\! n\!-\!2\}$}, we repeatedly use the co\-herence equation~\eqref{eq:coherence} to propagate away from~$\overline\pi$. 
Proposition~\ref{prop:(non)coh-equations} ensures both the existence and the uniqueness of propagation, so the resulting frieze agrees~with~$\mathbf z$.
\end{proof}

\begin{rem}
It is instructive to make a comparison between the assumptions under\-lying Corollary~\ref{cor:frieze-is-recurrence} and Theorem~\ref{thm:CM-frieze-is-recurrence}, or equivalently the corresponding recursive algorithms for constructing Heronian and \CM friezes. Corollary~\ref{cor:frieze-is-recurrence} relies on nonvanishing at the interior integer points, see \eqref{eq:nonzero-interior}/\eqref{eq:CM-nonzero-interior-z}. (In a geometric setting, the squared lengths of diagonals must be nonzero.) Theorem~\ref{thm:CM-frieze-is-recurrence} needs the additional requirement \eqref{eq:CM-nonzero-interior-H}: the nonvanishing of the Heron expressions. (In~a geometric setting, this means that certain triangles must have nonzero areas.) 
In other words, (re)constructing a Heronian frieze is computationally more feasible  than the similar task for a \CM frieze. 
\end{rem}

The following result can be viewed as a partial converse to Theorem~\ref{thm:geometric-frieze-is-coherent}. 

\begin{thm}
\label{thm:CM-frieze-glide-symmetry}
Let $\mathbf{z}_{\textup{CM}}\!=\! (z_\alpha)_{\alpha \in \InCM}$ be a coherent \CM frieze of order~$n$ satisfying the conditions in Theorem~\ref{thm:CM-frieze-is-recurrence}. 
Then there exists a plane $n$-gon~$P$ such that $\mathbf{z}_{\textup{CM}}\!=\!\mathbf{z}_{\textup{CM}}(P)$, cf.\ Definition~\ref{defn:CM-Frieze-from-P}. 
Consequently $\mathbf{z}_{\textup{CM}}$ has the glide symmetry: 
\[
z_{(i,j)} = z_{(j,i+n)} \quad (1\le j-i\le n-1). 
\]
\end{thm}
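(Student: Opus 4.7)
The plan is to construct a plane polygon $P$ with $\mathbf{z}_{\textup{CM}}=\mathbf{z}_{\textup{CM}}(P)$; once such $P$ exists, the glide symmetry is immediate from the identity
\[
z_{(i,j)}=x_{\langle i\rangle\langle j\rangle}(P)=x_{\langle j\rangle\langle i\rangle}(P)=x_{\langle j\rangle\langle i+n\rangle}(P)=z_{(j,i+n)}.
\]

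First I would fix a traversing path~$\pi$ and pass to its thickening $\overline\pi$. Under the identification of Definition~\ref{defn:CM-Frieze-from-P}, the restriction of $\mathbf{z}_{\textup{CM}}$ to $\pi$ is a collection of $2n-3$ squared distances indexed by the edges of a triangulation~$G$ of the convex $n$-gon: the $n-1$ integer nodes of~$\pi$ contribute the $n-3$ diagonals together with two sides, and the $n-2$ dashed lines crossed by~$\pi$ contribute the remaining sides. The hypothesis~\eqref{eq:CM-nonzero-interior-H} forces every Heron expression $H(x_{ij},x_{jk},x_{ik})$ for a triangle of~$G$ to be nonzero, so we may choose a signed area $S_{ijk}\in\CC$ with $S_{ijk}^2=H(x_{ij},x_{jk},x_{ik})$ for each such triangle; Corollary~\ref{cor:triangulation-recovery} then produces a polygon $P$ realizing all of this data. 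By Theorem~\ref{thm:geometric-frieze-is-coherent}, $\mathbf{z}_{\textup{CM}}(P)$ is coherent, and by construction it agrees with $\mathbf{z}_{\textup{CM}}$ at every index of~$\pi$. By the uniqueness part of Theorem~\ref{thm:CM-frieze-is-recurrence}, it will suffice to arrange additionally that the two friezes agree at the shifted nodes $(i_k+1,j_k+1)$ of $\overline\pi\setminus\pi$.

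The principal task is then to select the $n-2$ signs among the $S_{ijk}$'s so that this matching at shifted nodes holds. The key observation is that flipping one sign $S_{ijk}\mapsto-S_{ijk}$ reflects the corresponding vertex of~$P$ across the opposite side of its triangle (this is exactly the binary choice made by Lemma~\ref{lem:recover-triangle-1}); such a reflection sends some off-$G$ squared distances to the \emph{other} root of the quadratic Cayley-Menger equation that pins them down, while leaving all on-$G$ squared distances untouched. Since $\mathbf{z}_{\textup{CM}}$ also assigns, at each shifted node, a root of the same CM quadratic (the diamond condition~\eqref{eq:cayley-menger} holds throughout $\mathbf{z}_{\textup{CM}}$), every shifted-node discrepancy can in principle be toggled away by a suitable sign flip. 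I~would proceed inductively along~$\pi$, resolving one discrepancy at a time. The main obstacle is verifying global compatibility of the sign flips: that resolving a mismatch at one shifted node does not re-introduce one elsewhere. This is exactly where the coherence equation~\eqref{eq:coherence} enters; it is the precise algebraic condition singling out those CM friezes whose local CM choices glue into a globally consistent polygon, and the crux of the argument should show that coherence of $\mathbf{z}_{\textup{CM}}$ is both necessary and sufficient for a globally valid sign assignment to exist. Once the matching on~$\overline\pi$ is achieved, Theorem~\ref{thm:CM-frieze-is-recurrence} forces $\mathbf{z}_{\textup{CM}}(P)=\mathbf{z}_{\textup{CM}}$ throughout, completing the proof.
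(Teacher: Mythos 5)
Your overall strategy is sound and your diagnosis of where the difficulty lives is accurate, but the proof has a genuine gap exactly at the point you flag as ``the crux'': you never actually show that the $n-2$ signs of the triangle areas can be chosen so that the reconstructed polygon's frieze matches $\mathbf{z}_{\textup{CM}}$ at the shifted nodes of $\overline\pi$. Saying that each discrepancy ``can in principle be toggled away'' and that coherence ``should'' guarantee global compatibility is a statement of the problem, not a solution. The combinatorics is not innocent: flipping the area sign of one triangle of the thin triangulation $G$ reflects an entire arc of the polygon across a diagonal $D_m$, and therefore changes \emph{every} squared distance $x_{ab}$ with $\{a,b\}$ crossing $D_m$ --- in general several shifted nodes at once --- so an inductive ``resolve one discrepancy at a time'' scheme needs an explicit ordering argument showing that each flip leaves the already-matched nodes untouched, and a separate argument (this is where coherence must actually be \emph{used}, not just invoked) showing that the frieze's own choice of root at each shifted node is realizable by \emph{some} sign pattern. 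Without that, the proof does not close.

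For comparison, the paper's proof is a two-line reduction: by Theorem~\ref{thm:heronian-iff-coherent} the coherent \CM frieze extends to a Heronian frieze, which comes from a polygon by Corollary~\ref{cor:frieze-to-polygon}. The content you are missing is exactly the content of Theorem~\ref{thm:heronian-iff-coherent}, and the mechanism there is worth internalizing because it replaces your geometric reflection argument with algebra that makes the sign bookkeeping automatic: Proposition~\ref{prop:recover-S} shows that a single \CM diamond satisfying \eqref{eq:HHHHnot0} lifts to exactly two Heronian diamonds, with the pairwise products $pq$, $rs$, $ps$, $qr$, \dots{} forced by the partial derivatives of $M$ as in \eqref{eq:partial-xe,xf}--\eqref{eq:partial-xa,xc}; chaining these lifts along $\overline\pi$ pins down all signed areas up to one global sign, and then Propositions~\ref{prop:heronian-coherent-agree} and~\ref{prop:(non)coh-equations} show that the coherence equation~\eqref{eq:coherence} is precisely the condition under which the Heronian propagation reproduces the given entry of $\mathbf{z}_{\textup{CM}}$ at each new node. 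If you want to keep your more geometric route, you would need to prove an analogue of those three propositions; as written, the argument assumes its hardest step.
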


Theorem~\ref{thm:CM-frieze-glide-symmetry} will be proved at the end of Section~\ref{sec:CM-vs-heronian}. 

\begin{rem}
The nonvanishing conditions \eqref{eq:CM-nonzero-interior-z}--\eqref{eq:CM-nonzero-interior-H} appearing in Theorems \ref{thm:CM-frieze-is-recurrence} and~\ref{thm:CM-frieze-glide-symmetry}
are satisfied by any \CM frieze $\mathbf{z}_{\textup{CM}}(P)$ associated with a polygon~$P$ in the \emph{real} plane~$\RR^2$ such that any line extending a side of~$P$ does not pass through a third vertex. 
This condition is violated for the polygon shown in Figure~\ref{fig:nonconvex-hexagon}, so condition~\eqref{eq:CM-nonzero-interior-H} fails for the coherent frieze $\mathbf{z}_{\textup{CM}}(P)$ shown in Figure~\ref{fig:coherent-frieze-example}. 
\end{rem}

\pagebreak[3]

\section{\CM friezes vs.\ Heronian friezes}
\label{sec:CM-vs-heronian}

Our first goal is to show that, under mild genericity conditions, a Heronian diamond restricts to a \CM diamond. 





\begin{lem}
\label{lem:heronian-partial-menger}
Let $\xS = (a,b,c,d,e,f,p,q,r,s) \in \CC^{10}$ be a Heronian diamond satisfying the following condition: 
\begin{equation}
\label{eq:partial-menger-conditions}
(e,f) \neq (0,0) \quad {\underbar{\rm or}} \quad a = q = r = 0  \quad {\underbar{\rm or}} \quad c = p = s = 0.
\end{equation}
Then 
\begin{alignat}{3}
\label{eq:partial-xe,xf}
-2rs &= \partial_\leftarrow M(a,b,c,d,e,f), \qquad & -2pq &= \partial_\rightarrow M(a,b,c,d,e,f),\\
\label{eq:partial-xb,xd}
2qs &= \partial_\smallneline M(a,b,c,d,e,f), \qquad &2pr &= \partial_\smallseline M(a,b,c,d,e,f),\\
\label{eq:partial-xa,xc}
2ps &= \partial_\uparrow M(a,b,c,d,e,f), \qquad &2rq &= \partial_\downarrow M(a,b,c,d,e,f).
\end{alignat}
\end{lem}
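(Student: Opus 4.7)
My plan is to reduce the statement to the geometric Proposition~\ref{prop:partial-menger-formulas} whenever the Heronian diamond is realizable by a plane quadrilateral, and to handle the remaining degenerate cases by direct evaluation.

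First I would assume $e\neq 0$. The Heron equations \eqref{eq:heron-p}--\eqref{eq:heron-q} hold by hypothesis, so Lemma~\ref{lem:recover-triangle-1}, applied twice, produces a plane quadrilateral $(A_1,A_2,A_3,A_4)$ realizing the seven values $(a,b,c,d,e,p,q)$: first fix $A_1,A_3$ with $x(A_1,A_3)=e$, then locate $A_2$ from the triple $(b,c,p)$ and $A_4$ from $(a,d,q)$. Its ten measurements form a Heronian diamond by Proposition~\ref{prop:identities-ABCD}, and the uniqueness in Proposition~\ref{prop:heron-propagates} forces this diamond to coincide with the given one. Proposition~\ref{prop:partial-menger-formulas} now delivers the six identities \eqref{eq:partial-xe,xf}--\eqref{eq:partial-xa,xc} verbatim. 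If instead $e=0$ but $f\neq 0$, I would apply the second clause of Proposition~\ref{prop:diamond-reflect} to the reflected tuple $(a,d,c,b,f,e,s,r,q,p)$, which is a Heronian diamond with middle-horizontal variable $f\neq 0$, and translate the conclusion back using the symmetry of $M$ under $(a,b,c,d,e,f)\mapsto (a,d,c,b,f,e)$, which interchanges $\partial_\leftarrow M\leftrightarrow\partial_\rightarrow M$ and $\partial_\smallneline M\leftrightarrow\partial_\smallseline M$ while fixing $\partial_\uparrow M$ and $\partial_\downarrow M$, matching the swaps $p\leftrightarrow s$, $q\leftrightarrow r$ on the right-hand sides.

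It remains to treat the two boundary cases. When $a=q=r=0$, Lemma~\ref{lem:diamond-0-top} collapses the diamond to $(0,b,c,e,e,b)$ with $s=p$ and $p^2=H(b,c,e)$, and each of the six identities becomes an explicit polynomial equality that I would verify directly. After identifying the correct permutation of $Q$ for each partial derivative (for instance, from the $1\leftrightarrow 2$ symmetry of~$M$ one obtains $\partial_\uparrow M(a,b,c,d,e,f)=2Q(f,b,e,d,c,a)$), three of the identities reduce to vanishing at $(0,b,c,e,e,b)$, matching $q=r=0$ on the right-hand side, and the remaining three reduce to $2H(b,c,e)=2p^2$, matching $2ps=2pr=2p^2$. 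The case $c=p=s=0$ is handled identically using Lemma~\ref{lem:diamond-0-bottom}, or alternatively by invoking Proposition~\ref{prop:diamond-reflect} to convert it into the previous case.

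The main technical obstacle is bookkeeping: one must identify, for each of the six partial derivatives $\partial_\ast M$, the precise permutation of $(a,b,c,d,e,f)$ under which $2Q$ specializes to $\partial_\ast M$ (the paper records this only for $\partial_\rightarrow M$), and then verify that these permutations interact correctly with the diamond reflections used in the non-realizable cases. Once that table is set up, both the geometric reduction and the boundary verifications are routine.
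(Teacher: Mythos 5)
Your main case ($e\neq 0$) and your treatment of the boundary cases $a=q=r=0$ and $c=p=s=0$ follow the paper's proof essentially verbatim: realize the diamond as a plane quadrilateral (the paper cites Corollary~\ref{cor:triangulation-recovery}, which is exactly your double application of Lemma~\ref{lem:recover-triangle-1}), use the uniqueness in Proposition~\ref{prop:heron-propagates} to identify $(f,r,s)$ with the measurements of that quadrilateral, quote Proposition~\ref{prop:partial-menger-formulas}, and check the degenerate cases directly via Lemmas~\ref{lem:diamond-0-top} and~\ref{lem:diamond-0-bottom}. There are, however, two points that need repair. The more serious one is the sub-case $e=0$, $f\neq 0$: you invoke the second clause of Proposition~\ref{prop:diamond-reflect} to assert that $(a,d,c,b,f,e,s,r,q,p)$ is a Heronian diamond, but that clause carries the hypothesis $e\neq 0$, which is precisely what fails here (its proof obtains the reflected bilinear identity~\eqref{eq:f(p-q)} by dividing by $4e$). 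As written, this step is unjustified. The way to handle $f\neq 0$ without reflecting the diamond is the one the paper intends by ``treated in the same way'': apply Corollary~\ref{cor:triangulation-recovery} to the $4$-cycle triangulated by the other diagonal $\{2,4\}$, using the data $(a,b,c,d,f,r,s)$ --- the Heron equations \eqref{eq:heron-r}--\eqref{eq:heron-s} required there are part of the definition of a Heronian diamond, so no reflection is needed to get the quadrilateral.

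The second point is a concrete miscount in the boundary case $a=q=r=0$. By Lemma~\ref{lem:diamond-0-top} we have $q=r=0$ and $s=p$, so \emph{five} of the six left-hand sides, namely $-2rs$, $-2pq$, $2qs$, $2pr$, $2rq$, vanish, and only $2ps=2p^2=2H(b,c,e)$ is generically nonzero; in particular $2pr=0$, not $2p^2$ as you state. Correspondingly, five of the six partial derivatives of $M$ vanish at $(0,b,c,e,e,b)$ and $\partial_\uparrow M=2H(b,c,e)$ there (this last evaluation is the one the paper writes out). The identities are all true, so this is a bookkeeping slip rather than a fatal error, but it indicates the ``routine verification'' was not actually carried out, and together with the $e=0$, $f\neq0$ issue it leaves the proposal incomplete as it stands.
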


\begin{proof}
If $a \!=\! q\! =\! r \!=\! 0$ or $c \!= \!p \!= \!s\! = \!0$, then formulas~\eqref{eq:partial-xe,xf}--\eqref{eq:partial-xa,xc} can be checked one by one, taking care to apply Lemmas~\ref{lem:diamond-0-top} or~\ref{lem:diamond-0-bottom}, respectively. For example, $a \!=\! q \!=\! r\! =\! 0$ implies $d=e$, $f=b$, $ps=p^2=H(b,c,e)$, and consequently $\partial_\uparrow M(a,b,c,d,e,f)=2(e-b)(f-d)+2c(e+d+f+b-c-2a) 
=-2(e-b)^2+2c(2e+2b-c)=2H(b,c,e)=2ps$. 

If $(e,f) \neq (0,0)$, then we can assume that $e \neq 0$, since the case $f\neq 0$ can~be treated in the same way. Now Corollary~\ref{cor:triangulation-recovery} (applied to the triangulated 4-cycle~$G$ with diagonal~$\{1,3\}$) and Proposition~\ref{prop:heron-propagates} imply that $\xS = \xS(P)$ for some plane quadrilateral~$P$. 
It remains to apply Proposition~\ref{prop:partial-menger-formulas} and observe that 
equations~\eqref{eq:partial-xe,xf}--\eqref{eq:partial-xa,xc} are a restatement of \eqref{eq:partial-13,24}--\eqref{eq:partial-14,23} via the notational conventions \eqref{eq:abcd-ABCD}--\eqref{eq:pqrs-ABCD}.
\end{proof}

\begin{prop}
\label{prop:heronian-diamond-to-CM}
Let $(a,b,c,d,e,f,p,q,r,s) \in \CC^{10}$ be a Heronian diamond satisfying condition~\eqref{eq:partial-menger-conditions}. Then $(a,b,c,d,e,f)$ is a \CM diamond.
\end{prop}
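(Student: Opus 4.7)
The plan is to leverage the polynomial identity uncovered in the proof of Lemma~\ref{lem:noncoh-discriminant}, namely equation~\eqref{eq:M-and-patial-M}\ignorespaces\ignorespaces (that is, \eqref{eq:M-and-partial-M}), which asserts that
\[
(\partial_\rightarrow M)^2 = -8e\,M + 4H(b,c,e)H(a,d,e)
\]
as an unconditional identity of polynomials in $(a,b,c,d,e,f)$. Although Lemma~\ref{lem:noncoh-discriminant} uses this identity to deduce a consequence \emph{of} $M=0$, I will reverse the logic: by computing $(\partial_\rightarrow M)^2$ independently on the Heronian diamond and showing it equals $4H(b,c,e)H(a,d,e)$, the identity will force $eM=0$. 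A companion identity
\[
(\partial_\leftarrow M)^2 = -8f\,M + 4H(a,b,f)H(c,d,f)
\]
is also needed; it follows at once by applying \eqref{eq:M-and-partial-M} to the cyclically relabeled tuple $(b,c,d,a,f,e)$ and invoking the invariance $M(a,b,c,d,e,f) = M(b,c,d,a,f,e)$ (part of the $S_4$-symmetry of the Cayley-Menger determinant, cf.\ Remark~\ref{rem:CM-dihedral}).

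In the generic case $(e,f)\neq(0,0)$, Lemma~\ref{lem:heronian-partial-menger} supplies $\partial_\rightarrow M = -2pq$ and $\partial_\leftarrow M = -2rs$ when evaluated on the Heronian diamond. Squaring and invoking the Heron equations \eqref{eq:heron-p}--\eqref{eq:heron-s} converts these into $(\partial_\rightarrow M)^2 = 4p^2q^2 = 4H(b,c,e)H(a,d,e)$ and $(\partial_\leftarrow M)^2 = 4r^2 s^2 = 4H(a,b,f)H(c,d,f)$. Substituting into the two displayed polynomial identities yields $eM=0$ and $fM=0$, and since at least one of $e,f$ is nonzero, $M=0$.

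The two degenerate subcases permitted by~\eqref{eq:partial-menger-conditions} I will dispatch by direct inspection of the matrix in~\eqref{eq:cayley-menger}. When $a=q=r=0$, Lemma~\ref{lem:diamond-0-top} forces $d=e$ and $f=b$, whereupon the second and fifth rows of the $5\times 5$ Cayley-Menger matrix become identical (both equal to $(1,0,b,e,0)$), so $M=0$ by eye. The case $c=p=s=0$ is entirely analogous via Lemma~\ref{lem:diamond-0-bottom}, which gives $b=e$ and $f=d$ and makes the third and fourth rows coincide. I do not foresee any serious obstacle; the only step requiring any care is justifying the companion identity for $(\partial_\leftarrow M)^2$, which is a one-line consequence of the cyclic symmetry noted above.
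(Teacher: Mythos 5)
Your proof is correct, and in the generic case it takes a genuinely different route from the paper's. The degenerate cases $a=q=r=0$ and $c=p=s=0$ are treated exactly as in the paper (via Lemmas~\ref{lem:diamond-0-top} and~\ref{lem:diamond-0-bottom}); your observation that the resulting substitutions $d=e$, $f=b$ (resp.\ $b=e$, $f=d$) make two rows of the matrix in~\eqref{eq:cayley-menger} coincide is a clean way to carry out what the paper calls a ``direct computation''. For $(e,f)\neq(0,0)$, the paper reduces to $e\neq 0$, multiplies the propagation formulas \eqref{eq:r=}--\eqref{eq:s=} by $s$ and $r$ to obtain a relation among the pairwise products of $p,q,r,s$, substitutes all six product formulas from Lemma~\ref{lem:heronian-partial-menger}, and recognizes the resulting linear combination of the six partial derivatives of $M$ as $M(\mathbf x)$ itself. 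You instead read \eqref{eq:M-and-partial-M} as the discriminant identity for $M$ viewed as a quadratic in $f$ (and, after the relabeling $(a,b,c,d,e,f)\mapsto(b,c,d,a,f,e)$, which is indeed an $S_4$-symmetry of $M$, as a quadratic in $e$): since Lemma~\ref{lem:heronian-partial-menger} together with the Heron equations \eqref{eq:heron-p}--\eqref{eq:heron-s} forces $(\partial_\rightarrow M)^2=4p^2q^2=4H(b,c,e)H(a,d,e)$ and $(\partial_\leftarrow M)^2=4r^2s^2=4H(a,b,f)H(c,d,f)$, the two identities collapse to $eM=0$ and $fM=0$, and $(e,f)\neq(0,0)$ finishes the argument. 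Your version needs only the products $pq$ and $rs$ (not all six), treats $e$ and $f$ symmetrically with no WLOG step, and reuses an identity already verified en route to Lemma~\ref{lem:noncoh-discriminant}; the paper's version yields the sharper fact that $M(\mathbf x)$ is literally a specific linear combination of its own partials evaluated on the diamond. Note that both arguments rest on Lemma~\ref{lem:heronian-partial-menger}, whose generic-case proof already realizes the diamond as $\xS(P)$ for a plane quadrilateral $P$ (whence $M=0$ is immediate from Proposition~\ref{prop:cayley-menger-plane}), so neither derivation is more elementary than the other in substance.
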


\begin{proof}
First suppose $a = q = r = 0$ or $c = p = s = 0$. By Lemmas~\ref{lem:diamond-0-top} and~\ref{lem:diamond-0-bottom}, it remains to check that $(0,b,c,e,e,b)$ and $(a,b,0,d,b,d)$ are \CM diamonds. This can be verified by direct computation.

Now suppose $(e,f) \neq 0$. Without loss of generality, we can assume that $e\neq0$ (cf.\ Proposition~\ref{prop:diamond-reflect} and Remark~\ref{rem:CM-dihedral}). 
Equations~\eqref{eq:r=}--\eqref{eq:s=} imply that 
\[
2ers = ps(e+a-d)+qs(e-c+b) = pr(e-a+d)+qr(e+c-b),
\]
or equivalently
\begin{equation}
\label{eq:heron-to-CM-start}
2ers - \frac12\B((a-d)(ps-pr)+(b-c)(qs-qr)+e(ps+qs+pr+qr)\B) = 0.
\end{equation}
Let~$\mathbf x = (a,b,c,d,e,f)$. Substituting the expressions for $pq$, $rs$, $pr$, $qs$, $rq$, $ps$ given in Lemma~\ref{lem:heronian-partial-menger} into~\eqref{eq:heron-to-CM-start} and negating both sides results in
\begin{align*}
\frac e4\B(4\partial_\rightarrow M(\mathbf x)&+\partial_\uparrow M(\mathbf x)+\partial_\smallneline M(\mathbf x)+\partial_\downarrow M(\mathbf x)+\partial_\smallseline M(\mathbf x)\B) \\&+ \frac{a-d}4\B(\partial_\uparrow M(\mathbf x) - \partial_\smallseline M(\mathbf x)\B) + \frac{b-c}4\B(\partial_\smallneline M(\mathbf x) - \partial_\downarrow M(\mathbf x)\B) = 0. 
\end{align*}
The left-hand side of the last equation is nothing but~$M(\mathbf x)$.
\end{proof}

It turns out that when we restrict from the Heronian setup to the Cayley-Menger one, the coherence condition~\eqref{eq:coherence} is automatically satisfied:  


\begin{prop}
\label{prop:heronian-coherent-agree}
Let~$\mathbf x_1$,~$\mathbf x_2$,~$\mathbf x_3$,~$\mathbf x_4$ be four  Heronian diamonds arranged in a grid, as shown in Figure~\ref{fig:3-by-3-heronian-variables}. Suppose that each $\mathbf x_i$ satisfies condition~\eqref{eq:partial-menger-conditions}. Then the corresponding \CM diamonds (cf.\ Proposition~\ref{prop:heronian-diamond-to-CM}) satisfy the coherence equation~\eqref{eq:coherence}.
\end{prop}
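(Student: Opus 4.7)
The plan is to apply Lemma~\ref{lem:heronian-partial-menger} to each of the four diamonds and then verify the coherence equation directly using the shared entries forced by the grid arrangement; this replaces the delicate sign question of~\eqref{eq:+-coherence} by a tautology.

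First, since each $\mathbf{x}_i$ satisfies condition~\eqref{eq:partial-menger-conditions}, Lemma~\ref{lem:heronian-partial-menger} lets me rewrite the two products of partial derivatives appearing on the two sides of~\eqref{eq:coherence} as
\begin{align*}
\partial_\leftarrow M(\mathbf{x}_1)\,\partial_\rightarrow M(\mathbf{x}_4) &= (-2 r_1 s_1)(-2 p_4 q_4) = 4\, r_1 s_1 p_4 q_4,\\
\partial_\uparrow M(\mathbf{x}_2)\,\partial_\downarrow M(\mathbf{x}_3) &= (2 p_2 s_2)(2 r_3 q_3) = 4\, p_2 s_2 r_3 q_3,
\end{align*}
where $(p_i,q_i,r_i,s_i)$ denotes the signed-area entries of~$\mathbf{x}_i$. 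A crucial feature is that the two minus signs on the left collapse to a plus, matching the all-positive right-hand side, so each side is $4$ times an (unsigned) product of four signed areas.

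Second, I would match up these signed areas using the grid arrangement. Interpreting the configuration of Figure~13 in the convention of Figure~\ref{fig:3-by-3-variables}, I take $\mathbf{x}_1=\xdiamond_{i-1,j-1}$, $\mathbf{x}_2=\xdiamond_{i-1,j}$, $\mathbf{x}_3=\xdiamond_{i,j-1}$, $\mathbf{x}_4=\xdiamond_{i,j}$, and unpack the labels of the signed-area components from Definition~\ref{defn:heronian-frieze}. A direct index comparison then yields the four tautological identifications
\begin{align*}
r_1 = p_2 &= z_{(i-\tfrac12,\, j)}, & s_1 = q_3 &= z_{(i,\, j-\tfrac12)},\\
p_4 = r_3 &= z_{(i+\tfrac12,\, j)}, & q_4 = s_2 &= z_{(i,\, j+\tfrac12)},
\end{align*}
each equality holding simply because the two sides name the same half-integer node of the underlying $3\times 3$ fragment. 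Substituting into the displayed equation gives $r_1 s_1 p_4 q_4 = p_2 q_3 r_3 s_2$, which upon reordering coincides with $p_2 s_2 r_3 q_3$, establishing~\eqref{eq:coherence}.

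The only real work is the bookkeeping in the second step—verifying exactly which of $p,q,r,s$ of each diamond lives at each half-integer position. No additional algebraic identity is needed beyond Lemma~\ref{lem:heronian-partial-menger}, whose sign conventions are precisely calibrated so that the two negative factors on the left of~\eqref{eq:coherence} combine to match the two positive factors on the right, turning what is \emph{a priori} only a $\pm$ coincidence for Cayley-Menger diamonds (cf.~\eqref{eq:+-coherence}) into a genuine identity once the Heronian structure is imposed.
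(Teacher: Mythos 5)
Your proposal is correct and follows essentially the same route as the paper: apply Lemma~\ref{lem:heronian-partial-menger} to each of the four diamonds and observe that both sides of~\eqref{eq:coherence} equal $4$ times the product of the four signed-area entries adjacent to the central node. Your explicit index bookkeeping (e.g.\ $r_1=p_2=z_{(i-\frac12,j)}$) simply spells out what the paper states in one sentence.
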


\begin{figure}[ht]
\begin{center}
\vspace{-.1in}
\includegraphics[scale=1.2]{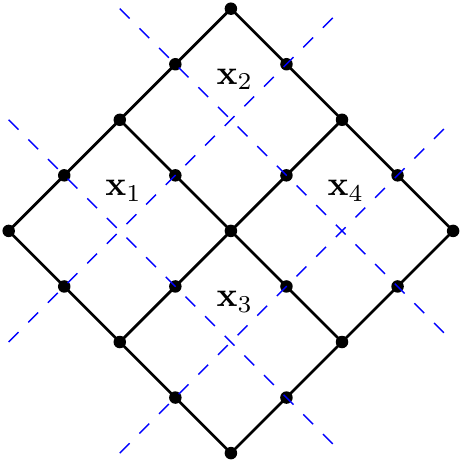}
\vspace{-.1in}
\end{center}
\caption{The arrangement of four Heronian diamonds
in Proposition~\ref{prop:heronian-coherent-agree}.}
\vspace{-.2in}
\label{fig:3-by-3-heronian-variables}
\end{figure}

\begin{proof}
We may apply Lemma~\ref{lem:heronian-partial-menger} to each diamond. Formulas~\eqref{eq:partial-xe,xf}--\eqref{eq:partial-xa,xc} imply that both sides of the coherence equation~\eqref{eq:coherence} will be equal to the product of the four entries adjacent to the central node in Figure~\ref{fig:3-by-3-heronian-variables}.
\end{proof}

\begin{prop}
\label{prop:recover-S}
Let $\mathbf x = (a,b,c,d,e,f)$ be a \CM diamond such that $(e,f) \neq (0,0)$ and
\begin{eqnarray}
\label{eq:HHHHnot0}
&H(b,c,e)H(a,d,e)H(a,f,b)H(c,f,d) \neq 0. 
\end{eqnarray}
Then there exist exactly two Heronian diamonds which restrict to $\mathbf x$, differing from each other by a simultaneous sign change of~$\{p,q,r,s\}$.
\end{prop}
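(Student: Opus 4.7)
The plan is to parametrize the Heronian extensions of $\mathbf x$ by sign choices of $p$ and $q$ using Proposition~\ref{prop:heron-propagates}, and then to match these against the two roots of $M(a,b,c,d,e,\cdot)$ viewed as a quadratic in~$f$.

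First I would reduce to the case $e \neq 0$: when instead only $f \neq 0$, one swaps the roles of $e$ and $f$ using the dihedral symmetry of \CM diamonds (Remark~\ref{rem:CM-dihedral}) together with the reflection symmetry of Heronian diamonds (Proposition~\ref{prop:diamond-reflect}), at which point the hypothesis $H(a,f,b) H(c,f,d) \neq 0$ plays the role now played by $H(b,c,e) H(a,d,e) \neq 0$. Assuming $e \neq 0$, the Heron equations $p^2 = H(b,c,e)$ and $q^2 = H(a,d,e)$ have nonzero right-hand sides, so there are four sign choices for $(p,q)$, each with $pq \neq 0$. For each such choice, Proposition~\ref{prop:heron-propagates} produces a unique Heronian diamond $(a,b,c,d,e,f',p,q,r,s)$ with
\[
f' = \frac{(p+q)^2 + (a-b+c-d)^2}{4e},
\]
and by Proposition~\ref{prop:heronian-diamond-to-CM} each such $f'$ is a root of the equation $M(a,b,c,d,e,f) = 0$ in~$f$.

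Next I would show that the four candidates realize only two distinct values $f_\pm$ of $f'$: flipping the signs of both $p$ and $q$ preserves $(p+q)^2$, while flipping exactly one sign replaces $(p+q)^2$ by $(p-q)^2$. Thus $f_+ - f_- = pq/e$, which is nonzero. On the other hand, from $\partial_\rightarrow M = 2Q$ with $Q$ as in~\eqref{eq:Q(abcdef)} one computes $\partial^2 M/\partial f^2 = -4e$, so $M(a,b,c,d,e,\cdot)$ is a genuine quadratic in~$f$ with leading coefficient $-2e \neq 0$ and therefore has at most two roots. Since $f_+$ and $f_-$ are two distinct roots, they are the entire root set.

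Finally, the actual value of $f$ in the given \CM diamond must coincide with either $f_+$ or $f_-$, and this singles out exactly two of the four sign choices of $(p,q)$ — precisely the pair related by $(p,q) \mapsto (-p,-q)$. By the explicit formulas~\eqref{eq:r=}--\eqref{eq:s=}, this sign flip transports to $(r,s) \mapsto (-r,-s)$, producing the two sign-conjugate Heronian diamonds claimed. Uniqueness is automatic: any Heronian extension of $\mathbf x$ must satisfy $p^2 = H(b,c,e)$ and $q^2 = H(a,d,e)$, after which Proposition~\ref{prop:heron-propagates} pins down $r$ and $s$, so every extension arises from one of our four sign choices. The main obstacle is the bookkeeping that identifies the two Heronian-derived $f$-values with the two roots of the \CM quadratic; once the leading coefficient $-2e$ is in hand and $f_+ \neq f_-$ is verified, the correspondence is forced by counting.
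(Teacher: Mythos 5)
Your argument is correct, but it takes a genuinely different route from the paper's. The paper proves uniqueness via Lemma~\ref{lem:heronian-partial-menger} and Lemma~\ref{lem:noncoh-discriminant}: under~\eqref{eq:HHHHnot0} the pairwise products $pq$, $rs$, $pr$, $qs$, $ps$, $qr$ of any extension are forced to equal halves of the nonvanishing partial derivatives of~$M$ at~$\mathbf x$, which determines $(p,q,r,s)$ up to a global sign; existence is then shown by setting $p^2=H(b,c,e)$, $q=-\tfrac{1}{2p}\partial_\rightarrow M(\mathbf x)$, $r=\tfrac{1}{2p}\partial_\smallseline M(\mathbf x)$, $s=\tfrac{1}{2p}\partial_\uparrow M(\mathbf x)$ and verifying the seven diamond equations directly. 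You instead enumerate the four candidate extensions through the propagation rule of Proposition~\ref{prop:heron-propagates}, use Proposition~\ref{prop:heronian-diamond-to-CM} to see that they realize exactly the two roots $f_\pm$ of the quadratic $M(a,b,c,d,e,\cdot)$ (distinct because $f_+-f_-=pq/e\neq 0$), and match the given $f$ against these roots. Your route gets the diamond equations for free from Proposition~\ref{prop:heron-propagates} and, in the branch $e\neq 0$, only uses the factors $H(b,c,e)H(a,d,e)$ of~\eqref{eq:HHHHnot0}; the paper's route needs no case split between $e$ and $f$, is symmetric in $p,q,r,s$, and produces the explicit formulas~\eqref{eq:partial-xe,xf}--\eqref{eq:partial-xa,xc} that are reused later (e.g., in the proof of Theorem~\ref{thm:heronian-iff-coherent}). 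One point deserves an extra line: in the reduction to $e\neq0$, Proposition~\ref{prop:diamond-reflect} reflects a Heronian diamond only when its fifth entry is nonzero, so when $e=0$, $f\neq 0$ it transports the two extensions of $(a,d,c,b,f,e)$ to extensions of $\mathbf x$ but does not immediately show that \emph{every} extension of $\mathbf x$ arises this way; you should either invoke the reflected ("leftward") form of Proposition~\ref{prop:heron-propagates} with $f$ in the role of $e$, or note that this is the same elision the paper itself makes in the proof of Proposition~\ref{prop:heronian-diamond-to-CM}.
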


\begin{proof}
In view of Lemma~\ref{lem:noncoh-discriminant} and condition~\eqref{eq:HHHHnot0}, the partial derivatives appearing in \eqref{eq:partial-xe,xf} 
do not vanish. Hence for any Heronian diamond $(a,b,c,d,e,f,p,q,r,s)$ restricting to~$\mathbf x$, we have $pqrs\neq 0$. 
Moreover the pairwise products of the nonzero numbers $p,q,r,s$ must be given by~\eqref{eq:partial-xe,xf}--\eqref{eq:partial-xa,xc}. It follows that these numbers are  uniquely determined by~$\mathbf{x}$, up to a simultaneous sign change. 

It remains to show existence. 
We need to check that conditions 
\[
M(\mathbf x) = 0, \quad 
p^2 = H(b,c,e), {\ \ }
q = -\tfrac1{2p}\partial_\rightarrow M(\mathbf x), {\ \ }
r = \tfrac1{2p}\partial_\smallseline M(\mathbf x), {\ \ }
s = \tfrac1{2p}\partial_\uparrow M(\mathbf x)
\]
imply the requirements \eqref{eq:heron-p}--\eqref{eq:bilinear} from the definition of a Heronian diamond. 
The Heron relations~\eqref{eq:heron-q}--\eqref{eq:heron-s} follow using 
\eqref{eq:noncoh-f}--\eqref{eq:noncoh-e}. Since~$p \neq 0$, equation~\eqref{eq:additivity} is equivalent to
$2H(b,c,e) - \partial_\rightarrow M(\mathbf x) = \partial_\smallseline M(\mathbf x) + \partial_\uparrow M(\mathbf x)$,
which can be verified by a direct calculation. Finally, equations~\eqref{eq:bretschneider} and~\eqref{eq:bilinear} reduce to $M(\mathbf x) \!=\! 0$ after eliminating $p,q,r,s$.
\end{proof}

We are now ready to describe the relationship between Heronian and \CM friezes. 

\begin{thm}
\label{thm:heronian-iff-coherent} 
Let $\mathbf{z}= (z_\alpha)_{\alpha \in I_n}$ be a Heronian frieze such that  
\begin{equation}
\label{eq:nonzero-interior-again}
\text{$z_{(i,j)}\neq 0$ for any $(i,j)\in\ZZ^2$  such that  $2\le j-i\le n-2$.}
\end{equation}
Then the restriction 
of $\mathbf{z}$ to $\InCM$ is a coherent \CM~frieze. 

$\!$Conversely, let $\mathbf{z}_{\textup{CM}} \!=\! (z_\alpha)_{\alpha \in \InCM}$ be a coherent \CM frieze satisfying~\eqref{eq:nonzero-interior-again}. In addition, suppose that for every $(i,j) \in \ZZ^2$ with~$2 \leq j-i \leq n-1$, we have
\begin{equation}
\label{eq:nonzero-heron-quantities}
H(z_{(i,j)},z_{(i+1,j)},z_{(i+\frac12,\smallneline)}) \neq 0, {\ \ } H(z_{(i,j-1)},z_{(i,j)},z_{(\smallseline,j-\frac12)}) \neq 0. 
\end{equation}
Then there exists a Heronian frieze $\mathbf{z} = (z_\alpha)_{\alpha \in I_n}$ which extends~$\mathbf{z}_{\textup{CM}}$. This extension is unique up to a global change of sign of the entries indexed by~$I_n \setminus \InCM$. 
\end{thm}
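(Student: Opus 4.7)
The plan is to handle the two implications separately. The forward direction is a pointwise application of the structural results of the previous section; the converse requires a sign-propagation argument in which the coherence condition plays the role of the unique non-trivial consistency constraint.

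For the forward direction, I~first check that condition~\eqref{eq:partial-menger-conditions} of Proposition~\ref{prop:heronian-diamond-to-CM} holds for every Heronian diamond in~$\mathbf{z}$. At an interior integer node with $2\le j-i\le n-2$, the nonvanishing assumption~\eqref{eq:nonzero-interior-again} supplies $e=z_{(i,j)}\neq 0$, so $(e,f)\neq(0,0)$. For a top diamond ($j-i=n-1$), the boundary conditions~\eqref{eq:border-equalities} give $a=q=0$, and Lemma~\ref{lem:diamond-0-top} then forces $r=0$; bottom diamonds are handled symmetrically via Lemma~\ref{lem:diamond-0-bottom}. Proposition~\ref{prop:heronian-diamond-to-CM} now identifies each restricted diamond as a \CM diamond, and Proposition~\ref{prop:heronian-coherent-agree} applied to every $2\times 2$ block of Heronian diamonds yields the coherence equation~\eqref{eq:coherence}. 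The \CM boundary conditions~\eqref{eq:cm-boundary-1}--\eqref{eq:cm-boundary-2} transfer directly from~\eqref{eq:border-equalities} together with~\eqref{eq:partial-diamond-1}--\eqref{eq:partial-diamond-2}.

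For the converse direction, I~first observe that each \CM diamond in $\mathbf{z}_{\textup{CM}}$ admits exactly two Heronian extensions, differing by a simultaneous sign change of $\{p,q,r,s\}$: Proposition~\ref{prop:recover-S} handles the interior diamonds, while Lemmas~\ref{lem:diamond-0-top} and~\ref{lem:diamond-0-bottom} handle the top and bottom diamonds, for which only one of the entries $p,q$ is genuinely free and the Heron nonvanishing~\eqref{eq:nonzero-heron-quantities} supplies two choices of sign for its square root. I~then construct $\mathbf{z}$ by sign propagation: pick any starting diamond, fix one of its two Heronian extensions, and for each adjacent diamond use the value of the shared half-integer entry (which appears in it as one of $p,q,r,s$) to pin down the overall sign.

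The hard part will be verifying global consistency. Since the diamond region is a topological strip, any two propagation paths between a pair of diamonds differ by a sequence of elementary loops, each enclosing a single interior integer node $(i,j)$ with $2\le j-i\le n-2$ that is shared by four neighboring diamonds. Around such a node, the four shared half-integer entries $\xi_1,\xi_2,\xi_3,\xi_4$ must satisfy four pairwise product identities of the form $\xi_k\xi_\ell = \pm\tfrac12\,\partial_\star M(\mathbf{x}_m)$ coming from Lemma~\ref{lem:heronian-partial-menger}, one contributed by each of the four \CM diamonds $\mathbf{x}_m$ meeting at the node. The existence of a globally consistent sign assignment around the loop is equivalent to the product of the relations coming from one diagonally opposite pair of diamonds agreeing with the product from the other pair; substituting the Lemma~\ref{lem:heronian-partial-menger} expressions makes this exactly the coherence equation~\eqref{eq:coherence}, which holds by hypothesis. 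Uniqueness up to a global sign flip of the entries indexed by $I_n\setminus\InCM$ is immediate, since the only freedom in the propagation is the initial sign choice.
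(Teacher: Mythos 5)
Your proof is essentially correct, but it follows a genuinely different route from the paper's in both directions. For the forward implication, the paper first invokes Corollary~\ref{cor:frieze-to-polygon} to realize $\mathbf{z}$ as $\mathbf{z}(P)$ for a polygon~$P$ and then quotes Theorem~\ref{thm:geometric-frieze-is-coherent}; you instead verify condition~\eqref{eq:partial-menger-conditions} diamond by diamond and apply Propositions~\ref{prop:heronian-diamond-to-CM} and~\ref{prop:heronian-coherent-agree} directly, which is more self-contained and avoids the geometric detour. For the converse, the paper lifts $\mathbf{z}_{\textup{CM}}$ along a thickened traversing path via Proposition~\ref{prop:recover-S} and then propagates with the Heronian recurrence, using Proposition~\ref{prop:(non)coh-equations} to check that the propagated integer entries reproduce the given \CM frieze; you instead lift every \CM diamond independently and glue by a sign-monodromy argument, observing that the strip of diamonds is simply connected and that the obstruction to a consistent sign choice around an elementary loop is, via Lemma~\ref{lem:heronian-partial-menger}, exactly the coherence equation~\eqref{eq:coherence}. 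This is a clean and arguably more conceptual mechanism (coherence as the vanishing of a sign obstruction), at the cost of having to check that every shared half-integer entry between adjacent diamonds is nonzero --- which does follow from \eqref{eq:nonzero-heron-quantities}, though you leave it implicit. Two small repairs: in the forward direction, Lemma~\ref{lem:diamond-0-top} does not \emph{force} $r=0$ from $a=q=0$ (it takes $a=q=r=0$ as a hypothesis); the vanishing of $r=z_{(i+\frac12,i+n)}$ in a top diamond should instead be read off from the boundary conditions~\eqref{eq:border-equalities}, as the paper itself does. And in the converse, when invoking Lemmas~\ref{lem:diamond-0-top}--\ref{lem:diamond-0-bottom} for the boundary diamonds you should note that the identities $d=e$, $f=b$ (resp.\ $b=e$, $f=d$) required for a Heronian extension to exist are supplied by the \CM boundary conditions \eqref{eq:cm-boundary-1}--\eqref{eq:cm-boundary-2}.
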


We note that condition~\eqref{eq:nonzero-interior-again} is the same as \eqref{eq:nonzero-interior} or \eqref{eq:CM-nonzero-interior-z}, whereas condition \eqref{eq:nonzero-heron-quantities} is the same as~\eqref{eq:CM-nonzero-interior-H}. 

\begin{proof}
The restriction of $\mathbf{z}$ to $\InCM$ is a \CM frieze by Proposition~\ref{prop:heronian-diamond-to-CM}. By Corollary~\ref{cor:frieze-to-polygon}, $\mathbf{z} = \mathbf{z}(P)$ is the frieze obtained from some $n$-gon, so Theorem~\ref{thm:geometric-frieze-is-coherent} implies that $\mathbf{z}_{\textup{CM}} = \mathbf{z}_{\textup{CM}}(P)$ is coherent. This proves the first part of the theorem.

Now let $\mathbf{z}_{\textup{CM}}$ be a coherent \CM frieze satisfying~\eqref{eq:nonzero-interior-again}--\eqref{eq:nonzero-heron-quantities}. Let $\pi$ be a traversing path in $\InCM$, and let $\overline\pi$ be its thickening, see Definition~\ref{defn:traversing-CM}. 
Let $\widetilde{\pi}$ be the ``lift'' of~$\overline\pi$ to~$I_n$, constructed from two adjacent traversing paths in $I_n$ whose restriction onto $\InCM$ agrees with~$\overline\pi$; in addition, $\widetilde{\pi}$~contains the midpoints of lattice segments connecting neighboring points lying on these two paths to each other. Successively applying Proposition~\ref{prop:recover-S} to the string of \CM diamonds whose indexing sets are contained in~$\overline\pi$, we conclude that there exist exactly two arrays $\widetilde{\mathbf{z}}_1$~and $\widetilde{\mathbf{z}}_2$ on $\widetilde{\pi}$ which agree with 
$\mathbf{z}_{\textup{CM}}$ along $\overline\pi$, and which satisfy the Heronian diamond equations for each diamond in~$\widetilde{\pi}$. Moreover, these arrays differ by a simultaneous sign change of the entries indexed by~$\widetilde{\pi}\setminus\overline\pi$. To complete the proof of the theorem, it remains to establish the following claims:
\begin{itemize}[leftmargin=.3in]
\item[{(i)}]
there exist unique Heronian friezes $\mathbf z_1$ and $\mathbf z_2$ which extend $\widetilde{\mathbf z}_1$ and $\widetilde{\mathbf z}_2$, respectively, and restrict to~$\mathbf{z}_{\textup{CM}}$;
\item[{(ii)}]
${\mathbf z}_1$ and ${\mathbf z}_2$ differ by a global sign change of the entries indexed by~$I_n\setminus \InCM$.
\end{itemize}

Let~$\mathbf x_1$,~$\mathbf x_2$,~$\mathbf x_3$ be three Heronian diamonds located along~$\overline\pi$, all sharing a node~$z_{(i,j)}$. Apply Corollary~\ref{cor:heronian-propagate} to construct the fourth Heronian diamond $\mathbf x_4$ containing~$z_{(i,j)}$. The boundary conditions \eqref{eq:cm-boundary-1}--\eqref{eq:cm-boundary-2} required of a \CM frieze, together with~\eqref{eq:nonzero-interior-again}--\eqref{eq:nonzero-heron-quantities} along $\overline\pi$, ensure that each of the four Heronian diamonds $\mathbf{x}_1, \mathbf{x}_2, \mathbf{x}_3, \mathbf{x}_4$ satisfies condition~\eqref{eq:partial-menger-conditions}. By Proposition~\ref{prop:heronian-coherent-agree}, this establishes the coherence condition~\eqref{eq:coherence} for the corresponding four \CM diamonds. 
Furthermore, Proposition~\ref{prop:(non)coh-equations} applies (thanks to~\eqref{eq:nonzero-interior-again}--\eqref{eq:nonzero-heron-quantities}), implying that the newly constructed entry of $\mathbf x_4$ agrees with the corresponding entry of~$\mathbf z_{\textup{CM}}$. We now repeat the above process over and over, propagating away from~$\overline\pi$, to construct the (unique) Heronian friezes ${\mathbf z}_1$ and ${\mathbf z}_2$ satisfying the specifications in claim~(i). Near the boundary, we use propagation rules for Heronian friezes (see \eqref{eq:d=e}--\eqref{eq:s=p} and \eqref{eq:b=e}--\eqref{eq:r=q}), which agree with their counterparts for \CM friezes, see~\eqref{eq:cm-boundary-1}--\eqref{eq:cm-boundary-2}. 

Finally, a repeated application of Proposition~\ref{prop:recover-S} establishes claim~(ii). 
\end{proof}


\begin{proof}[Proof of Theorem~\ref{thm:CM-frieze-glide-symmetry}]
By Theorem~\ref{thm:heronian-iff-coherent}, the \CM frieze $\mathbf{z}_{\textup{CM}}$ can be extended to a Heronian frieze. The latter comes from a polygon by Corollary~\ref{cor:frieze-to-polygon}. Hence so does the former.
\end{proof}


\section*{Acknowledgments}

This work was conducted in Summer 2019 within the framework of the REU program at the University of Michigan. 
We thank Igor Dolgachev, Steven Gortler,~and Dylan Thurston for answering some questions that arose in our work. 
We are grateful to the referees for a number of suggestions which improved the quality of exposition. 

\bibliographystyle{amsplain}

\end{document}